\DeclareMathOperator{\spn}{span}
\newtheorem{Theo}{Theorem}
\newtheorem{Rem}{Remark}
\newtheorem{Lem}{Lemma}
\newtheorem{Ex}{Example}
\numberwithin{equation}{section}
 \numberwithin{Lem}{section}
 \numberwithin{Defi}{section}
 \numberwithin{Theo}{section}
 \numberwithin{Rem}{section}
  \numberwithin{Coro}{section}
  \numberwithin{Fig}{section}
\def\a{\alpha}
\begin{document}

\begin{frontmatter}

%\title{Unconditional energy dissipation and error  estimates of the SAV Fourier spectral method for nonlinear fractional generalized wave equation}

\title{Unconditional energy dissipation and error  estimates of the SAV Fourier spectral method for nonlinear fractional generalized wave equation$^\star$ \tnotetext[1]{This work was supported in part by NSF of China (12001499, 11771163,11801527,12011530058), China Postdoctoral Science Foundation (2019M662506,2018M632791)
.}}
% \tnotetext[label1]{}
\author{Nan Wang$^{a}$,~~Meng Li$^{a,*}$,~~Chengming Huang$^{b,c}$}
\cortext[cor1]{Corresponding author (Meng Li).\\
\emph{Email addresses}: \texttt{nan\_wang1991@163.com} (N. Wang),
\texttt{limeng@zzu.edu.cn} (M. Li$^*$),\\ \texttt{chengming\_huang@hotmail.com} (C. Huang).}
 \address{$^{a}$School of Mathematics and Statistics, Zhengzhou University, Zhengzhou 450001, China}
  \address{$^{b}$School of Mathematics and Statistics, Huazhong University of Science and Technology, Wuhan 430074, China\\}
  \address{$^{c}$Hubei Key Laboratory of Engineering Modeling and Scientific Computing, Huazhong University of Science and Technology, Wuhan 430074, China\\}
\date{}
\begin{abstract}
% Text of abstract
\par
In this paper, we  consider a second-order  scalar auxiliary variable (SAV) Fourier spectral method to solve the nonlinear fractional generalized wave  equation. %based on the scalar auxiliary variable (SAV) approach in time and fourier spectral method in space.
%Using SAV  approach,
 Unconditional energy conservation or dissipation properties of the fully discrete  scheme are first established. Next, we utilize the  temporal-spatial error splitting argument to obtain  unconditional optimal error estimate of the fully discrete scheme, which overcomes time-step restrictions  caused by strongly nonlinear system, or the restrictions that the nonlinear term  needs to satisfy the assumption of global Lipschitz condition in all previous works for fractional undamped or damped wave equations. %Next, we provide   unconditional optimal error estimate of the full discrete scheme without any restrictions on time stepsize. However, all previous works require the assumption that nonlinear term satisfies global Lipschitz condition, or  certain time-step restrictions caused by strongly nonlinear system.
%and if with strongly nonlinear term, the all previous works require certain time-step restrictions.
%To overcome the restrictions, we adopt the temporal-spatial error splitting argument to prove the  proposed scheme by splitting the  error into temporal error and the spatial error. With rigorous error analysis, our scheme obtains the optimal convergence order $O(\tau^2+N^{-m})$.
Finally, some numerical experiments are presented to confirm our theoretical analysis. %and illustrate the effects of the damping term.
\end{abstract}
\begin{keyword}
% keywords here, in the form: keyword \sep keyword
Fractional generalized wave equation \sep Fourier spectral method  \sep Scalar auxiliary variable (SAV)   \sep Unconditional error estimate
\sep Energy dissipation
% PACS codes here, in the form: \PACS code \sep code
\end{keyword}
\end{frontmatter}

% main text
\section{Introduction}
In this paper,
we consider the following two-dimensional nonlinear space-fractional generalized wave equation (FGWE)
 \begin{eqnarray}
% \nonumber to remove numbering (before each equation)
 % && u_{tt}+\gamma u_{t}-K_{x}\frac{\partial^{2\alpha_{1}}}{\partial|x|^{2\alpha_{1}}}
%  -K_{y}\frac{\partial^{2\alpha_{2}}}{\partial|x|^{2\alpha_{2}}}+f(u)=g(x,y,t),~~x\in
%   \mathbb{R},~0<t\leq T,\label{sin1}\\
    && u_{tt}+\kappa (-\Delta)^{\frac{\a}{2}}u+\gamma_{1}(-\Delta)^{\frac{\a}{2}}u_{t}+\gamma_{2}u_{t}+F'(u)=0,~~\mathbf{x}\in
   \Omega,~0<t\leq T,\label{sin1}\\
  && u(\mathbf{x},0)=\phi_{0}(\mathbf{x}),~u_{t}(\mathbf{x},0)=\varphi_{0}(\mathbf{x}),~ \mathbf{x}\in \Omega , \label{sin2}
   %&& u(\mathbf{x},t)=0,~~~(\mathbf{x},t)\in \partial\Omega \times (0,T],\label{sin3}
\end{eqnarray}
where the parameters $\kappa>0$, %{\color{blue} that is not arbitrarily small constant so that it could effect the error},
$1<\alpha\leq2$, $\mathbf{x}=(x_{1},x_{2})$ and  $\gamma_{1}\geq 0, \gamma_{2} \geq 0$ are coefficients of  damping terms. $F(u)$ is nonlinear and nonnegative, and $F'(u)$ satisfies local Lipschitz condition.%$\partial_{u}F'(u)$ is bounded.  %$\Omega=(-L_{x},L_{x})\times(-L_{y},L_{y})$,%In particular, when $F'(u)=\sin u$, \eqref{sin1} reduces to sine-Gordon equation with damping term, and  when $F'(u)=u^{2}(1-u^2)$, \eqref{1} becomes the damped Klein-Gordon equation.
The fractional Laplacian
$(-\Delta)^{\alpha}$ is defined by
\begin{equation}\label{fourierdef}
  (-\Delta)^{\a}u=\sum_{s,l\in Z}(s^{2}+l^{2})^{\a}\hat{u}_{sl}e^{isx_{1}+ilx_{2}}.
\end{equation}
%In addition, the boundary condition can be period or Dirchlet  boundary condition.
  For $u\in L_{per}^{2}(\Omega)$, {\color{blue}$u=\sum_{s,l\in \mathbb{Z}}\hat{u}_{sl}e^{isx_{1}+ilx_{2}}$}, where  $i^{2}=-1$ and the Fourier coefficients {\color{blue}$\hat{u}_{sl}$} are given by
\begin{equation}\label{uhatdef}
\hat{u}_{sl}=(u,e^{isx_{1}+ilx_{2}})=\frac{1}{|\Omega|}\int_{\Omega}ue^{isx_{1}+ilx_{2}}d\mathbf{x}.
\end{equation}
In addition, \eqref{sin1}-\eqref{sin2} possess the following energy function
{\color{blue}
\begin{equation}
H(t)=\int_{\Omega}\frac{1}{2}|u_{t}|^{2}+\frac{\kappa}{2}|(-\Delta)^{\frac{\alpha}{4}}u|^{2}+F(u)d\mathbf{x},
\end{equation}
}
 and it holds that
\begin{equation}
\frac{dH(t)}{dt}=-\int_{\Omega}(\gamma_{1}|(-\Delta)^{\frac{\alpha}{2}}u_{t}|^{2}+\gamma_{2}|u_{t}|^{2})d\mathbf{x}, ~~t\in (0,T].
\end{equation}

  The conventional damped ($\gamma_{1}= 0, \gamma_2> 0$)~or undamped ($\gamma_{1}=\gamma_2=0$) wave equations are extensively described in anomalous diffusion, hydrology and so on \cite{MR2526990,MR2564022,MR3292452}.  %There are many efficient numerical methods for the damped or undamped equations, such as finite difference method \cite{1}, finite element method \cite{2} and spectral method\cite{3}.
    If $F'(u)=\sin u$, \eqref{sin1} deduces to the damped sine-Gordon equation, and  when $F'(u)=u^{3}-1$, \eqref{sin1} becomes the damped Klein-Gordon equation.
   The FGWEs as the
generalization of conventional wave equations %, cover a series of wave equation with damping term, which
are widely  applied in science and engineering to well describe the  long-range interaction, such as the interaction of solitons in a collisionless plasma  and the presence of the phenomenon of nonlinear supratransmission of energy \cite{MR2569625}. A increasing number of  mathematical and numerical methods have been developed for the FGWE. Along the mathematical front, Shomberg \cite{MR3998923} proved the well-posedness of %fractional generalized wave equation
the FGWE and also derived the energy dissipation-preserving structure. In general, the analytical solution of the FGWE is difficult to obtain and we have to construct numerical methods.

  Along the numerical front, numerous efficient numerical methods for the fractional damped ($\gamma_{1}= 0, \gamma_2> 0$)~or undamped ($\gamma_{1}=\gamma_2=0$) wave equation were proposed in recent years.
  Liu et al. \cite{MR3281878}  studied a class of unconditionally stable difference schemes based on the $Pad\acute{e}$ approximation for the Riesz space-fractional telegraph equation. Ran and Zhang \cite{MR3466137} derived a compact difference scheme with accuracy of fourth-order in space and second-order in time for fractional damped wave equation.  Plenty of numerical works in \cite{MR3342471,MR3759440,li2019fast,wang2019conservative,MR3782561,MR3342469,MR3903668,WangN:2018:CMA,ZhangH:2018:IJCM}  have shown that energy  dissipative (or conservative) numerical methods have obvious superiority  over common numerical methods in long time simulation and thus developing energy dissipation-preserving methods for the fractional wave equation has attracted more  and more researchers' attention. For the fractional undamped wave equation, Xin et al. \cite{MR3826777} proposed {\color{blue} a} conservative difference scheme for Riesz
space-fractional sine-{G}ordon equation. Fu et al. \cite{MR4032777} derived {\color{blue}an} explicit structure-preserving algorithm by considering {H}amiltonian system for fractional wave equation. For the fractional damped wave equation, Mac\'{\i}as-D\'{\i}az et al. \cite{MR3758373,MR3758444,MR3850343,MR3759123} proposed a series of efficient  structure-preserving finite difference methods to
 study the fractional sine-Gordon equation with Riesz fractional derivative. % which get the convergence order of  second in space. %Furthermore, they \cite{MR3759123} also proposed a  energy-preserving  compact finite difference method to solve the  fractional nonlinear wave equations, which get convergence order of  fourth in space.
 Xie et al. \cite{MR3946475} proposed {\color{blue}a} dissipation-preserving fourth-order difference method for fractional damped wave equation. %The all existing methods were devoted to solve the fractional wave equation.
 However, we noticed that all existing numerical methods were devoted to investigate the nonlinear fractional wave equation ($\gamma_{1}= 0, \gamma_2\geq 0$) and
there is a little attention for the fractional generalized wave equation ($\gamma_{1}> 0, \gamma_2> 0$). Recently,   Xie et al. \cite{MR3955186} developed a  dissipation-preserving fourth-order difference method for the nonlinear fractional generalized  wave equations with Riesz fractional derivative in two dimensions, and this is the first work to prove unconditional stability and convergence of the fully discrete scheme. But unfortunately, the above scheme is fully implicit, which needs to solve the nonlinear system by a iterative algorithm at each time step, and increases the computational cost.
  At the same time, {\color{blue} the} unconditional convergence analysis is under the assumption that $F'(u)$ must satisfy  global Lipschitz condition, {\color{blue}
which means that $|F'(x)-F'(y)|\leq L |x-y|,~x,~y \in R$, where $L$ is a positive constant independent of $x$ and $y$. However, this assumption limits the range of applicability. In fact, nonlinear terms in a very large of nonlinear wave equations are the local   Lipschitz continuous, i.e., for any $x,y \in \Omega$,  $|F'(x)-F'(y)|\leq L_{\Omega} |x-y|$, where $L_{\Omega}>0$ is a Lipschitz constant dependent on  $\Omega$. For example, if $F'(u)=u^{2}$, the nonlinear term satisfies the local Lipschitz condition.}  %This assumption is strict, thus we need to consider the weak condition of nonlinear term.
  These two reasons motivate us to construct a linearly implicit scheme  to reduce computational cost and use some other analytical techniques to remove the restriction of the global Lipschitz condition.

   There are many  energy dissipative schemes in \cite{MR1848726,MR3691811,MR3402723,MR3267098} for classical nonlinear wave equations. In particular, Jiang et al. \cite{MR3995980} extended invariant energy quadratization (IEQ) approach (proposed in \cite{MR3691811}), and established a linearly implicit energy-preserving scheme for sine-Gordon equation.  Very recently, to overcome introducing an auxiliary function by the IEQ approach,  Shen and Xu \cite{MR3989239,MR3723659} proposed a new efficient linearly implicit scheme which is called the scalar auxiliary variable (SAV) approach  to describe energy dissipative physical systems without the Lipschitz assumption. Moreover, the SAV approach {\color{blue}results} in a linear system at each time step and it is easy to implement the scheme.   Next, Li and Shen \cite{li2019stability} gave rigorous error estimate  for  the stabilized SAV Fourier spectral for the phase field crystal equation.  The SAV approach is  being studied extensively for other partial differential equations, see Refs. \cite{MR4083210,LiShen} and references therein. Following the superiority  of SAV approach, we develop the fully discrete SAV scheme %based on stabilized SAV approach
   for the FGWE \eqref{sin1}-\eqref{sin2}.   In addition, the convergence analysis plays an important role in numerical methods. Most previous works need the requirement of global Lipschitz condition and thus it is necessary to study the convergence analysis under weaker condition. Similar to the technique in \cite{MR3274369}, Wang et al. \cite{WangFei} recently provided a rigorous convergence analysis  for fractional damped wave equation under the condition $\tau^2N\leq c$ caused by the inverse inequality as usual, where nonlinear term is under the weaker assumption.%are needed to get boundness of numerical solution  in $L^{\infty}$ norm  in stability and convergence analysis  because of the use of inverse inequality.
  %The condition is caused by the inverse inequality .  To obtain unconditional optimal error estimate of linearly implicit scheme,
  % the  boundness of numerical solution  in $L^{\infty}$ norm  is required in convergence analysis.
  The similar time-spatial stepsize restriction often appeared in general nonlinear parabolic equations, and in order to get the unconditional error estimate without the time stepsize restrictions,  the temporal-spatial error splitting argument  was presented to {\color{blue} get} unconditional optimal  error estimate for  parabolic equations \cite{MR3072763,MR3673699,MR3530964}. Inspired by the technique, Zeng  et al. \cite{ MR4045247} proposed Fourier spectral method  for nonlinear fractional reaction-diffusion equation and used the temporal-spatial error splitting argument  to overcome the time stepsize restriction in \cite{MR3274369}. However,  using the temporal-spatial error splitting argument to obtain the unconditional error estimate for the FGWE, has not been studied.
  %Although Mac\'{\i}as-D\'{\i}az et. al. proposed an linear dissipation-preserving scheme, the proof is still under the same assumption. The assumption make the scheme is conditional bound and convergence.
%considered linear fully discrete schemes based on the stabilized SAV approach for the phase field crystal equation, and proved the scheme is unconditional energy stability and gave rigorous error estimate.
 In this work, we extend SAV  Fourier spectral method for the FGWE, which the resulting system can be used  FFT solver. Furthermore, the temporal-spatial error splitting argument is adopted to study the unconditional convergence analysis of the fully discrete scheme.

 The main contribution of this paper is  to develop the SAV Fourier-spectral method for the FGWE in two dimensions, which can maintain energy dissipation and reach high error accuracy. The unconditional energy dissipation is proved in detail.  We use temporal-spatial error splitting argument  to obtain  the unconditional optimal error estimate without the global Lipschitz assumption.  We obtain that our scheme is convergent with second order accuracy in time and spectral accuracy  in space.
 Numerical experiments are provided to confirm  the theoretical results and validate the efficiency of our algorithms.

 The rest of paper is organized as follows. In Section 2, we recall some technical lemmas and notations. In Section 3, we present the SAV spectral method for FGWE and main results.  Unconditional energy dissipation and  optimal error estimate results of the fully discrete scheme are shown.  We get time-discrete system based on SAV approach  and give error estimate as well as unconditional energy dissipation for time-discrete system in Section 4.
The boundedness
of numerical solutions in $L^{\infty}$ norm are proved unconditionally in Section 5. In section 6, the optimal convergence analysis of the SAV spectral scheme based on the  temporal
error estimate and the spatial error estimate is completed. The numerical experiments are performed to confirm the correctness of theoretical analysis in Section 7. Some conclusions are drawn in Section 8.
 %we rewrite an equivalent system based on stabilized SAV approach  and establish a modified energy dissipation.
% we get time-discrete system based on SAV approach  and give error estimate as well as unconditional energy dissipation for time-discrete system.  In Section 4, the unconditional energy stability and convergence of the full discrete scheme based on stable SAV fourier spectral method are strictly proven.  The numerical experiments are performed to confirm the correctness of theoretical analysis in Section 5. In the end, some conclusions are drawn in Section 6.

\section{Preliminaries}
In this section, we first recall some essential notations and lemmas.  Denote $\Omega$ be a finite domain with $\Omega=I_{x}\times I_{y}=(a, b)\times (c,d)$. Let $C^{\infty}_{per}(\Omega)$ be the set of all restrictions onto  $\Omega$ of all complex-values, $2\pi$-periodic, $C^{\infty}$ function on $\mathbb{R}^{2}$. For  a nonnegative real number $r$, let $H^{r}_{per}(\Omega)$ be the closure of $C_{per}^{\infty}(\Omega)$ with  {\color{blue} the  semi-norm $|\cdot|_{r}$ and norm $\|\cdot\|_{r}$ } defined by
 \begin{equation}\label{timedis1}
   |u|_{r}^{2}=\sum_{s,l\in Z}\hat{u}_{s,l}^{2}(s^{2}+l^{2})^{r},~~ \|u\|_{r}^{2}=\sum_{s,l\in Z}\hat{u}_{s,l}^{2}(1+s^{2}+l^{2})^{r}.
 \end{equation}
 %The inner product in the space  $L^2(\Omega)$  with norm $\|\cdot\|$ can be defined as
%$\|u\|=(u, u)^{\frac{1}{2}}$.
% Besides, define the  $L^p$-norm $(1\leq p<\infty)$ and $L^{\infty}$-norm as follows
%\begin{equation}
%\|u\|_{p}:=\bigg(\int_{\Omega}|u(x,y)|^pdxdy\bigg)^{\frac{1}{p}},~~~~\|u\|_{\infty}:=ess \sup_{x,y\in\Omega}| u(x,y) |.
%\end{equation}
 \begin{Lem}\label{le1.1}
 (\cite{MR3670722}).
 Let $\xi,r \geq 0$, then for any $u,v \in H_{per}^{\xi+r}(\Omega)$, it hold that
 \begin{equation*}
   ((-\Delta)^{\xi+r}u,v)=((-\Delta)^{\xi}u,(-\Delta)^{r}v).
 \end{equation*}
 \end{Lem}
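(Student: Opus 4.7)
The plan is to prove the lemma directly from the Fourier-series definition of the fractional Laplacian given in \eqref{fourierdef}, together with Parseval's identity. Since $(-\Delta)^{\sigma}$ acts diagonally on the Fourier basis $\{e^{isx_1+ilx_2}\}_{s,l\in\mathbb{Z}}$ by multiplication by the symbol $(s^2+l^2)^{\sigma}$, both sides of the claimed identity should reduce to the same absolutely convergent double series, so the whole argument is essentially a symbol calculation on the Fourier side.

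First I would write $u,v \in H^{\xi+r}_{per}(\Omega)$ in their Fourier expansions $u=\sum_{s,l}\hat{u}_{sl}e^{isx_1+ilx_2}$ and $v=\sum_{s,l}\hat{v}_{sl}e^{isx_1+ilx_2}$, and note that by \eqref{fourierdef},
\[
(-\Delta)^{\xi+r}u=\sum_{s,l\in\mathbb{Z}}(s^2+l^2)^{\xi+r}\hat{u}_{sl}\,e^{isx_1+ilx_2},\qquad (-\Delta)^{r}v=\sum_{s,l\in\mathbb{Z}}(s^2+l^2)^{r}\hat{v}_{sl}\,e^{isx_1+ilx_2},
\]
and likewise for $(-\Delta)^{\xi}u$. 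Then, applying the inner product defined through \eqref{uhatdef} together with the orthonormality of $\{e^{isx_1+ilx_2}\}$, Parseval's identity yields
\[
((-\Delta)^{\xi+r}u,v)=\sum_{s,l\in\mathbb{Z}}(s^2+l^2)^{\xi+r}\hat{u}_{sl}\overline{\hat{v}_{sl}},
\]
while independently
\[
((-\Delta)^{\xi}u,(-\Delta)^{r}v)=\sum_{s,l\in\mathbb{Z}}(s^2+l^2)^{\xi}\hat{u}_{sl}\cdot (s^2+l^2)^{r}\overline{\hat{v}_{sl}}=\sum_{s,l\in\mathbb{Z}}(s^2+l^2)^{\xi+r}\hat{u}_{sl}\overline{\hat{v}_{sl}}.
\]
A termwise comparison then gives the equality.

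The only point that requires a bit of care, and which I would treat as the one nontrivial step, is justifying that both series converge absolutely so that the formal manipulation (in particular, pulling the symbol factors $(s^2+l^2)^{\xi}$ and $(s^2+l^2)^{r}$ across the inner product) is legitimate. For this I would use the Cauchy--Schwarz inequality on the Fourier side,
\[
\sum_{s,l}(s^2+l^2)^{\xi+r}|\hat{u}_{sl}||\hat{v}_{sl}|\le\Bigl(\sum_{s,l}(s^2+l^2)^{\xi+r}|\hat{u}_{sl}|^2\Bigr)^{1/2}\Bigl(\sum_{s,l}(s^2+l^2)^{\xi+r}|\hat{v}_{sl}|^2\Bigr)^{1/2}\le|u|_{\xi+r}|v|_{\xi+r},
\]
which is finite because $u,v\in H^{\xi+r}_{per}(\Omega)$ by \eqref{timedis1}. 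This absolute convergence also licenses the rearrangement needed to identify the two expressions as a single series, completing the proof.
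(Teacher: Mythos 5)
The paper does not prove this lemma at all: it is quoted from the cited reference \cite{MR3670722} and used as a known fact, so there is no in-paper argument to compare against. Your Fourier-side proof (diagonal action of $(-\Delta)^{\sigma}$ on the basis, Parseval, and Cauchy--Schwarz with the weight split as $(s^{2}+l^{2})^{(\xi+r)/2}\cdot(s^{2}+l^{2})^{(\xi+r)/2}$ to get absolute convergence from $u,v\in H^{\xi+r}_{per}(\Omega)$) is the standard justification and is correct. The only caveat is inherited from the statement itself rather than from your argument: with only $u,v\in H^{\xi+r}_{per}(\Omega)$ the quantities $(-\Delta)^{\xi+r}u$, $(-\Delta)^{\xi}u$, $(-\Delta)^{r}v$ need not lie in $L^{2}$, so both sides must be read, exactly as you do, as the common absolutely convergent Fourier series (i.e.\ as duality pairings) rather than as genuine $L^{2}$ inner products.
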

 \begin{Lem}\label{le1.2}
(\cite{MR3670722,MR4045247}). If $\xi<r$ and $u ,v \in H^{2\xi}_{per}(\Omega)$ for all $\xi>0$, then
 \begin{equation*}
 \|(-\Delta)^{\xi}(uv)\|^{2}\leq C_{\xi}[\|v\|_{\infty}^{2}\|(-\Delta)^{\xi}u\|^2]+\|(-\Delta)^{\xi}v\|^{2}\|u\|_{\infty}^2],
 \end{equation*}
 \end{Lem}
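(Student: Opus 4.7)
The plan is to view this as a Kato-Ponce type fractional Leibniz inequality on the periodic torus and prove it via Fourier analysis. Writing $\mathbf{k}=(s,l)$ with $|\mathbf{k}|^{2}=s^{2}+l^{2}$, Parseval's identity together with the Fourier-multiplier definition of $(-\Delta)^{\xi}$ gives
\begin{equation*}
\|(-\Delta)^{\xi}(uv)\|^{2} \;=\; \sum_{\mathbf{k}}|\mathbf{k}|^{4\xi}\,|\widehat{uv}(\mathbf{k})|^{2},
\qquad
\widehat{uv}(\mathbf{k})=\sum_{\mathbf{k}'}\hat{u}(\mathbf{k}-\mathbf{k}')\,\hat{v}(\mathbf{k}').
\end{equation*}
Thus the task reduces to showing that the weight $|\mathbf{k}|^{2\xi}$ can be absorbed into either factor inside the convolution in such a way that, after Plancherel, the second factor can be controlled by an $L^{\infty}$ norm.

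The key analytic input is the sublinear symbol estimate
\begin{equation*}
|\mathbf{k}|^{2\xi}\;\leq\; C_{\xi}\bigl(|\mathbf{k}-\mathbf{k}'|^{2\xi}+|\mathbf{k}'|^{2\xi}\bigr),
\qquad \xi\geq 0,
\end{equation*}
which follows from $|\mathbf{k}|\leq|\mathbf{k}-\mathbf{k}'|+|\mathbf{k}'|$ together with $(a+b)^{2\xi}\leq C_{\xi}(a^{2\xi}+b^{2\xi})$. Substituting this bound into the preceding identity splits $|\mathbf{k}|^{2\xi}\widehat{uv}(\mathbf{k})$ into two pieces in which the weight is pushed respectively onto $\hat{u}(\mathbf{k}-\mathbf{k}')$ or onto $\hat{v}(\mathbf{k}')$. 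On the Fourier side these pieces morally correspond to the products $((-\Delta)^{\xi}u)\,v$ and $u\,((-\Delta)^{\xi}v)$. A single application of the elementary H\"older bound $\|fg\|\leq\|f\|\,\|g\|_{\infty}$ to each such product then yields precisely $\|v\|_{\infty}^{2}\|(-\Delta)^{\xi}u\|^{2}+\|u\|_{\infty}^{2}\|(-\Delta)^{\xi}v\|^{2}$ up to the constant $C_{\xi}$.

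The main obstacle will be making the identification in the previous paragraph rigorous. The sublinear estimate is a statement about $|\mathbf{k}|^{2\xi}$, while the Fourier coefficients $\hat{u},\hat{v}$ are signed, so one cannot literally reassemble the majorized sums into honest products such as $((-\Delta)^{\xi}u)v$. To get past this I would insert a Bony-type paraproduct decomposition $uv=T_{u}v+T_{v}u+R(u,v)$ using dyadic Littlewood-Paley projectors: on each of $T_{u}v$ and $T_{v}u$ the two frequencies are well-separated and the symbol inequality becomes sharp, yielding the desired bounds immediately after Plancherel and H\"older, while the resonant remainder $R(u,v)$, where neither factor dominates in frequency, is summed by the almost-orthogonality of the dyadic blocks. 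The hypothesis $\xi>0$ ensures summability of the geometric series arising in this last step, and combining the three contributions produces the claimed inequality with a constant $C_{\xi}$ depending only on $\xi$.
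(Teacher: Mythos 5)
The paper never actually proves Lemma \ref{le1.2}: it is imported verbatim from the cited references \cite{MR3670722,MR4045247}, so there is no internal proof to compare against. Judged on its own terms, your plan is sound and is essentially the standard proof of the periodic Kato--Ponce (fractional Leibniz) estimate at the $L^2$ level, and your self-diagnosis of the obstacle is exactly right: after the triangle inequality and the symbol bound $|\mathbf{k}|^{2\xi}\leq C_{\xi}\bigl(|\mathbf{k}-\mathbf{k}'|^{2\xi}+|\mathbf{k}'|^{2\xi}\bigr)$ one only controls convolutions of the moduli $|\hat u|$, $|\hat v|$, and Young's inequality then yields the Wiener norm $\sum_{\mathbf{k}}|\hat v_{\mathbf{k}}|$ rather than $\|v\|_{\infty}$, so the ``moral'' identification with $((-\Delta)^{\xi}u)\,v$ cannot be made literal; the elementary splitting alone does not prove the stated inequality. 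Your Bony/Littlewood--Paley repair is the correct one and does close the argument: the two paraproducts are handled by almost orthogonality of the outputs together with the uniform $L^{\infty}$-boundedness of the dyadic projectors, and the resonant piece is summed using $\xi>0$, all of which transfers to the torus without difficulty. Two caveats. First, this route produces the inequality with \emph{some} constant depending on $\xi$, not the explicit $C_{\xi}=\max\{1,2^{2\xi-1}\}$ displayed in the lemma (that constant is what the naive symbol splitting would suggest, but, as you observed, that splitting does not actually deliver the $L^{\infty}$ bound); this is harmless for the paper, which never uses the explicit value. Second, what you have written is a plan rather than a proof: the resonant-block estimate (schematically $2^{k\xi}\|\Delta_{k}R(u,v)\|\lesssim\sum_{j\geq k-C}2^{(k-j)\xi}\,2^{j\xi}\|\Delta_{j}u\|\,\|v\|_{\infty}$ followed by Schur/Young summation in $j,k$) and the almost-orthogonality step for the paraproducts still need to be written out to turn the sketch into a complete argument.
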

 where $C_{\xi}=max\{1,2^{2\xi-1}\}$.
 \begin{Lem}\label{le1.3}
 (\cite{Adams2003Sobolev,MR4045247}).
 Let $\Omega$ be a domain in $\mathbb{R}^{n}$ satisfying the cone condition. If $mp>n$, let $p\leq q \leq \infty$; if $mp<n$, let $p\leq q \leq q^{*}=np/(n-mp)$. Then there exists  a positive constant $C_{1}$ depending on $m,n, p,q$, and the dimensions of $\Omega$ such that for all $u\in W^{m,p}(\Omega)$,
 \begin{equation*}
   \|u\|_{q}\leq C_{1}\|u\|_{m,p}^{\varepsilon}\|u\|_{p}^{1-\varepsilon},
 \end{equation*}
 where $\varepsilon=(n/mp)-(n/mq)$.
 \end{Lem}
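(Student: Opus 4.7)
The plan is to deduce the inequality from the standard Sobolev embedding theorem combined with the classical log-convexity (Lyapunov) interpolation of $L^{q}$ norms. The key observation is that once one has a single continuous embedding into the endpoint space $L^{q^{*}}$, every intermediate $L^{q}$ norm with $p\leq q\leq q^{*}$ is controlled by the geometric mean of $\|u\|_{p}$ and $\|u\|_{q^{*}}$, and the interpolation exponent then automatically takes the form $\varepsilon=n/(mp)-n/(mq)$ demanded by the statement.

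First, I would invoke Sobolev embedding under the cone condition on $\Omega$: if $mp<n$ there is a continuous injection $W^{m,p}(\Omega)\hookrightarrow L^{q^{*}}(\Omega)$ with $q^{*}=np/(n-mp)$, and if $mp>n$ there is the injection $W^{m,p}(\Omega)\hookrightarrow L^{\infty}(\Omega)$ (formally $q^{*}=\infty$). Either way one has a constant $C$, depending only on $m,n,p$ and on the cone parameters of $\Omega$, with $\|u\|_{q^{*}}\leq C\,\|u\|_{m,p}$. This step is essentially the Gagliardo--Nirenberg--Sobolev inequality whose proof in Adams, \emph{Sobolev Spaces}, proceeds by cone-covering and a standard extension argument, so I would simply cite it.

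Second, I would apply the Lyapunov interpolation inequality. For $p\leq q\leq q^{*}$ choose $\theta\in[0,1]$ by the relation $\tfrac{1}{q}=\tfrac{1-\theta}{p}+\tfrac{\theta}{q^{*}}$; then H\"older's inequality applied to $|u|^{q}=|u|^{q(1-\theta)}\cdot|u|^{q\theta}$ with conjugate exponents $\tfrac{p}{q(1-\theta)}$ and $\tfrac{q^{*}}{q\theta}$ gives $\|u\|_{q}\leq\|u\|_{p}^{1-\theta}\|u\|_{q^{*}}^{\theta}$. A short algebraic reduction using $\tfrac{1}{q^{*}}=\tfrac{1}{p}-\tfrac{m}{n}$ (interpreting $1/q^{*}=0$ when $q^{*}=\infty$) yields $\theta=n/(mp)-n/(mq)=\varepsilon$. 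Combining with the embedding of the first step produces
\begin{equation*}
\|u\|_{q}\;\leq\;\|u\|_{p}^{1-\varepsilon}\bigl(C\|u\|_{m,p}\bigr)^{\varepsilon}\;=\;C_{1}\,\|u\|_{m,p}^{\varepsilon}\,\|u\|_{p}^{1-\varepsilon},
\end{equation*}
with $C_{1}=C^{\varepsilon}$, which is the claimed inequality.

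The main obstacle is in principle nothing more than careful bookkeeping, since this is a textbook result. The only delicate points are: (i) the two cases $mp<n$ and $mp>n$ require separate interpretations of the endpoint $q^{*}$, though the formula for $\varepsilon$ unifies them; (ii) the borderline case $mp=n$ is \emph{not} covered by the statement and indeed fails for $q=\infty$ without a logarithmic correction, so I would simply note that the hypotheses exclude it; and (iii) the constant $C_{1}$ inherits a dependence on the cone geometry of $\Omega$ from the Sobolev-embedding step, which is exactly why the lemma allows $C_{1}$ to depend on the dimensions of $\Omega$.
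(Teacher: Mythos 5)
Your argument is fine in the regime $mp<n$, but it does not prove the lemma in the other regime $mp>n$, and that is precisely the case the paper actually uses (bounding $\|\cdot\|_{\infty}$, i.e.\ $q=\infty$, in $n=2$). The problem is in your ``short algebraic reduction.'' The identity $\tfrac1{q^{*}}=\tfrac1p-\tfrac mn$ and the interpretation $\tfrac1{q^{*}}=0$ are incompatible when $mp>n$: the Sobolev endpoint is $L^{\infty}$, so the Lyapunov relation $\tfrac1q=\tfrac{1-\theta}{p}+\tfrac{\theta}{q^{*}}$ must be used with $\tfrac1{q^{*}}=0$, which gives $\theta=1-\tfrac pq=p\bigl(\tfrac1p-\tfrac1q\bigr)$, whereas the lemma asserts the exponent $\varepsilon=\tfrac nm\bigl(\tfrac1p-\tfrac1q\bigr)$. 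Since $mp>n$ means $\tfrac nm<p$, you get $\varepsilon<\theta$ strictly for every $q>p$, and because $\|u\|_{p}\leq\|u\|_{m,p}$ the stated inequality (smaller power on the larger norm) is \emph{strictly stronger} than the one your embedding-plus-H\"older argument yields. A concrete instance: $n=1$, $m=1$, $p=2$, $q=\infty$ gives $\varepsilon=\tfrac12$, i.e.\ the Agmon-type bound $\|u\|_{\infty}\leq C\|u\|_{1,2}^{1/2}\|u\|_{2}^{1/2}$, while your route only produces $\|u\|_{\infty}\leq C\|u\|_{1,2}$. So the claim that the formula for $\varepsilon$ ``unifies'' the two cases is exactly where the proof breaks.

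To close the gap in the case $mp>n$ one needs the genuinely multiplicative form of the embedding, i.e.\ a Gagliardo--Nirenberg-type estimate that interpolates against the derivative seminorm (or, as in Adams--Fournier, a proof on cones that tracks how the embedding constant scales under dilation and then optimizes over the scale), rather than plain $L^{p}$--$L^{\infty}$ log-convexity. Note that the paper itself offers no proof of this lemma --- it is quoted from Adams' \emph{Sobolev Spaces} --- so the honest options are either to cite that multiplicative theorem directly or to reproduce its scaling argument; your present step cannot be repaired by bookkeeping alone. (Incidentally, for the way the lemma is invoked in this paper the weaker exponent you obtain would still suffice, since the right-hand side is only ever bounded by a constant, but it does not establish the lemma as stated.)
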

 \begin{Lem}\label{le1.4}
 (\cite{Shen2011Spectral,MR4045247}).
 For any $u\in X_{N}$, there existing a positive constant $C_{2}$ independent of $N$ and  the following inverse inequality holds,
 \begin{equation*}
   \|u\|_{\infty} \leq C_{2}N\|u\|.
 \end{equation*}
 \end{Lem}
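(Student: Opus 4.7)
The plan is to exploit the fact that $X_N$ is a finite-dimensional space of trigonometric polynomials: any $u \in X_N$ has the form
\begin{equation*}
u(\mathbf{x}) = \sum_{|s|,|l|\leq N} \hat{u}_{s,l}\, e^{i(sx_1+lx_2)},
\end{equation*}
so the total number of nonzero modes is at most $(2N+1)^2$. The inequality will then follow by combining a pointwise coefficient bound (via Cauchy--Schwarz over the mode indices) with Parseval's identity (to recover the $L^2$ norm on the right-hand side).

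First I would take absolute values pointwise: for every $\mathbf{x} \in \Omega$,
\begin{equation*}
|u(\mathbf{x})| \leq \sum_{|s|,|l|\leq N} |\hat{u}_{s,l}|.
\end{equation*}
Applying Cauchy--Schwarz to this sum, treating the trivial factor $1$ on each of the at most $(2N+1)^2$ active indices, I obtain
\begin{equation*}
|u(\mathbf{x})|^2 \leq (2N+1)^2 \sum_{|s|,|l|\leq N} |\hat{u}_{s,l}|^2.
\end{equation*}
Next I would invoke Parseval's identity with the normalization fixed by \eqref{uhatdef}, which reads $\|u\|^2 = |\Omega| \sum_{s,l} |\hat{u}_{s,l}|^2$, in order to rewrite the right-hand side in terms of $\|u\|$. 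Taking the supremum over $\mathbf{x}$ and absorbing $|\Omega|^{-1/2}$ and the factor $(2N+1)/N$ into a single constant $C_2$ independent of $N$, I arrive at the claimed inverse inequality $\|u\|_\infty \leq C_2 N \|u\|$.

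I do not anticipate any real obstacle: the argument is a textbook Bernstein/inverse estimate for trigonometric polynomials, and what is sometimes called a $N^{d/2}$-bound in $d$ dimensions reduces in the present two-dimensional setting exactly to the linear factor $N$ stated in the lemma. The only mild subtlety is keeping the Fourier normalization in \eqref{uhatdef} consistent with the definition of $\|\cdot\|$ in \eqref{timedis1}, so that the constant $|\Omega|$ is harmless; a sharper constant (via the $L^\infty$ norm of the two-dimensional Dirichlet kernel) is available if needed, but the Cauchy--Schwarz route is already enough for the bound as stated.
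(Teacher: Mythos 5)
Your argument is correct: the Cauchy--Schwarz bound on the Fourier coefficients together with Parseval's identity is the standard proof of this inverse (Nikolskii-type) inequality, and in two dimensions the $O(N^2)$ count of active modes indeed yields the factor $N$ after taking the square root. Note, however, that the paper itself does not prove this lemma at all; it is quoted from the cited references (Shen--Tang--Wang's spectral methods book and the Zeng et al.\ paper), so there is no in-paper proof to compare against --- your self-contained derivation is exactly what those references do. One cosmetic mismatch: the paper's space is $X_{N}=\spn\{e^{isx_{1}+ilx_{2}}:-N/2\leq s,l\leq N/2-1\}$, so the number of modes is exactly $N^{2}$ rather than the $(2N+1)^{2}$ you use; this only changes the constant $C_{2}$, and with the paper's indexing Cauchy--Schwarz gives the even cleaner bound $|u(\mathbf{x})|\leq N\bigl(\sum_{s,l}|\hat{u}_{s,l}|^{2}\bigr)^{1/2}$, after which the $|\Omega|$-dependent Parseval factor is absorbed into $C_{2}$ exactly as you say.
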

 \begin{Lem}\label{le1.5}
 (\cite{MR3670722}).
 Suppose that $u\in H^{r}_{per}(\Omega)$.  Then the following estimate holds for all $0\leq \xi \leq r$,
 \begin{equation*}
   \|u-P_{N}u\|_{\xi}\leq C_{3}N^{\xi-r}\|u\|_{r},
 \end{equation*}
 where $C_{3}$ is a positive constant not depending on $N$.
 \end{Lem}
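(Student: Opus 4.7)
My plan is to prove Lemma \ref{le1.5} by directly exploiting the Fourier series representation of $u$ and the definition of the norms in \eqref{timedis1}. Since the $L^2$-orthogonal projection $P_N$ onto the trigonometric subspace $X_N$ acts as Fourier truncation, I expect the difference $u - P_N u$ to be exactly the tail of the Fourier expansion, which immediately turns the estimate into an elementary bound on a truncated series.

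First, I would write $u = \sum_{s,l\in\mathbb{Z}} \hat{u}_{sl} e^{i s x_1 + i l x_2}$ and $P_N u = \sum_{(s,l)\in S_N} \hat{u}_{sl} e^{i s x_1 + i l x_2}$, where $S_N = \{(s,l) : |s|,|l|\le N\}$ (or the analogous disk, depending on the precise convention used in the paper). By orthonormality of the Fourier basis I would obtain
\begin{equation*}
\|u - P_N u\|_\xi^2 \;=\; \sum_{(s,l)\notin S_N} |\hat{u}_{sl}|^2 (1+s^2+l^2)^\xi .
\end{equation*}
Next, I would factor this as $(1+s^2+l^2)^{\xi-r}(1+s^2+l^2)^r$ and note that for every index $(s,l)\notin S_N$ we have $\max(|s|,|l|)>N$, hence $1+s^2+l^2 > N^2$. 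Since $\xi - r \le 0$, the factor $(1+s^2+l^2)^{\xi-r}$ is then uniformly bounded above by $N^{2(\xi-r)}$ (up to an absolute constant). Pulling this out of the sum leaves exactly $\|u\|_r^2$, yielding the desired inequality with $C_3$ independent of $N$.

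The argument is essentially bookkeeping on Fourier coefficients, so I do not anticipate a real obstacle; the only point that requires a little care is to verify that the constant arising from the inequality $\sup_{(s,l)\notin S_N}(1+s^2+l^2)^{\xi-r} \le C\, N^{2(\xi-r)}$ is independent of $N$, $\xi$, and $r$ (it depends only on the geometric constant relating $1+s^2+l^2$ to $\max(s^2,l^2)$ on the complement of $S_N$). Once this is noted, the two cases $\xi = r$ (trivial) and $\xi < r$ (where the monotonicity in $s^2+l^2$ is genuinely used) both fall out directly, completing the proof.
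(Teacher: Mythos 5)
Your argument is correct and is precisely the standard Fourier--truncation tail estimate: since $P_N$ is the $L^2$-orthogonal projection onto the span of the exponentials, $u-P_Nu$ is the tail of the Fourier series, and factoring $(1+s^2+l^2)^{\xi}=(1+s^2+l^2)^{\xi-r}(1+s^2+l^2)^{r}$ on the excluded frequencies gives the bound; the paper itself offers no proof of this lemma but quotes it from \cite{MR3670722}, where the same computation appears. The one small adjustment is that with the paper's convention $X_{N}=\spn\{e^{isx_{1}+ilx_{2}}:-N/2\le s,l\le N/2-1\}$ the excluded indices only satisfy $1+s^{2}+l^{2}\ge cN^{2}$ with $c$ of order $1/4$, so $C_{3}$ picks up a harmless factor $c^{-(r-\xi)}$: it is independent of $N$, which is all the lemma asserts, but not independent of $\xi$ and $r$ as you claimed.
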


\section{The SAV Fourier spectral method for two-dimensional FGWE and main results}
In this section, %we consider the SAV approach for the fractional general wave equation \eqref{sin1}-\eqref{sin2}, followed by the time-discrete system for SAV scheme and unconditional energy dissipation as well as error estimate for the time discrete system.
a  linealy implicit  fully discrete scheme is constructed, which is based on the SAV approach in time and Fourier spectral method in space. Moreover, we give theoretical analysis of the linear system, including the unconditional energy dissipation and the corresponding error estimate.

\subsection{The stabilized SAV  approach}

The equation \eqref{sin1}-\eqref{sin2}, by introducing a scalar variable $r(t)=\sqrt{E(u)}$,  can be transformed into the following system %$v=u_{t}$, the \eqref{sin1} becomes
\begin{eqnarray}
% \nonumber to remove numbering (before each equation)
&&u_{t}=v, \label{sinene1}\\
%&&v_{t}+\gamma v-K_{x}\frac{\partial^{2\alpha_{1}}u}{\partial|x|^{2\alpha_{1}}}
%  -K_{y}\frac{\partial^{2\alpha_{2}}u}{\partial|x|^{2\alpha_{2}}}+f(u)=g(x,y,t) ,  \label{sinene2}\\
  &&v_{t}+\kappa (-\Delta)^{\frac{\a}{2}}u+\gamma_{1}(-\Delta)^{\frac{\a}{2}}v+\gamma_{2}v+r(t)\frac{F'(u)}{\sqrt{E(u)}} =0 ,  \label{sinene2}\\
  &&r_{t}=\frac{1}{2\sqrt{E(u)}}\int_{\Omega}F'(u)u_{t}d\mathbf{x}. \label{sinene3}\\
&& u(\mathbf{x},0)=\phi(\mathbf{x},y),~v(\mathbf{x},0)=\varphi(\mathbf{x}),~ \mathbf{x}\in \Omega,  \label{sinene4}
%&& u(\mathbf{x},0)=0,~~~(\mathbf{x},t)\in \partial\Omega \times (0,T].\label{sinene5}
\end{eqnarray}
where $E(u)=\int_{\Omega}F(u)d\mathbf{x}+C_{0}$, and  $C_{0}$ is chosen such that $E(u)>0$. In addition, we assume  $F \in C^{3}(\mathbb{R})$.
\begin{Theo}\label{theo2.1}
(Energy dissipation)
Under the periodic boundary conditions, \eqref{sinene1}-\eqref{sinene4} poessess the following energy function
\begin{equation}\label{saveng}
H(t)=\int_{\Omega}\frac{1}{2}|u_{t}|^{2}+\frac{\kappa}{2}|(-\Delta)^{\frac{\alpha}{4}}u|^{2}d\mathbf{x}+r^{2},
\end{equation}
 and it holds that
 \begin{eqnarray}
 % \nonumber to remove numbering (before each equation)
  &\frac{dH(t)}{dt}=-\int_{\Omega}\Big(\gamma_{1}|(-\Delta)^{\frac{\alpha}{2}}u_{t}|^{2}+\gamma_{2}|u_{t}|^{2}\Big)d\mathbf{x} \leq 0, \label{saveng1}
 % &H(t_{2})\leq H(t_{1}),~t_{1}>t_{2}. \label{saveng2}
 \end{eqnarray}
 and thus
 \begin{equation}\label{saveng2}
   H(t_{2})\leq H(t_{1}),~t_{1}>t_{2}.
 \end{equation}
%\begin{equation}\label{saveng1}
%\frac{dH(t)}{dt}=-\int_{\Omega}(\gamma_{1}|(-\Delta)^{\frac{\alpha}{2}}u_{t}|^{2}+\gamma_{2}|u_{t}|^{2})d\mathbf{x} \leq 0,
%
%\end{equation}
\begin{proof}
  Taking the inner product of \eqref{sinene2} with $v$ and using \eqref{sinene1}, we can directly obtain
  \begin{equation*}
    (v_{t},v)+\kappa ((-\Delta)^{\frac{\a}{2}}u,u_{t})+\gamma_{1}((-\Delta)^{\frac{\a}{2}}v,v)+\gamma_{2}(v,v)+r(t)\frac{\Big(F'(u),u_{t}\Big)}{\sqrt{E(u)}} =0.
  \end{equation*}
  Combining with \eqref{sinene1} and Leibiniz rule,  we have
  \begin{eqnarray*}
  % \nonumber to remove numbering (before each equation)
   \int_{\Omega}\frac{1}{2}|u_{t}|^{2}+\frac{\kappa}{2}|(-\Delta)^{\frac{\alpha}{4}}u|^{2}d\mathbf{x}+r(t)^{2}+\int_{\Omega}\Big(\gamma_{1}|(-\Delta)^{\frac{\alpha}{2}}u_{t}|^{2}+\gamma_{2}|u_{t}|^{2}\Big)d\mathbf{x} =0,
  \end{eqnarray*}
  and one can get \eqref{saveng} and \eqref{saveng1}-\eqref{saveng2}.
\end{proof}
\end{Theo}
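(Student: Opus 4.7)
The plan is to test the momentum-like equation \eqref{sinene2} against $v=u_{t}$, multiply the SAV evolution \eqref{sinene3} by $2r(t)$, and add the two resulting identities; the self-adjointness of the fractional Laplacian from Lemma \ref{le1.1} will convert the energetic terms into exact time derivatives and leave a manifestly non-positive dissipation on the right-hand side.

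First, I would take the $L^{2}(\Omega)$ inner product of \eqref{sinene2} with $v$ and use $v=u_{t}$ from \eqref{sinene1} to obtain
\begin{equation*}
(u_{tt},u_{t})+\kappa\bigl((-\Delta)^{\alpha/2}u,u_{t}\bigr)+\gamma_{1}\bigl((-\Delta)^{\alpha/2}u_{t},u_{t}\bigr)+\gamma_{2}\|u_{t}\|^{2}+\frac{r(t)}{\sqrt{E(u)}}\bigl(F'(u),u_{t}\bigr)=0.
\end{equation*}
Applying Lemma \ref{le1.1} with $\xi=r=\alpha/4$ turns $(u_{tt},u_{t})$ into $\tfrac{1}{2}\tfrac{d}{dt}\|u_{t}\|^{2}$, the second term into $\tfrac{\kappa}{2}\tfrac{d}{dt}\|(-\Delta)^{\alpha/4}u\|^{2}$, and the $\gamma_{1}$-damping contribution into $\gamma_{1}\|(-\Delta)^{\alpha/4}u_{t}\|^{2}$, while the $\gamma_{2}$-term is simply $\gamma_{2}\|u_{t}\|^{2}$.

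Second, multiplying \eqref{sinene3} by $2r(t)$ yields $\tfrac{d}{dt}r^{2}=\tfrac{r(t)}{\sqrt{E(u)}}(F'(u),u_{t})$, which is exactly the nonlinear contribution in the previous identity. Adding the two relations cancels the $F'(u)$ term and produces
\begin{equation*}
\frac{d}{dt}\!\left[\tfrac{1}{2}\|u_{t}\|^{2}+\tfrac{\kappa}{2}\|(-\Delta)^{\alpha/4}u\|^{2}+r^{2}\right]=-\gamma_{1}\|(-\Delta)^{\alpha/4}u_{t}\|^{2}-\gamma_{2}\|u_{t}\|^{2}\leq 0,
\end{equation*}
which is the claimed dissipation identity \eqref{saveng1}; integrating in time then delivers the monotonicity \eqref{saveng2}.

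There is no serious obstacle here: the only points requiring care are that $u,u_{t}\in H^{\alpha/2}_{per}(\Omega)$ so that the symmetric splitting in Lemma \ref{le1.1} with $\xi=r=\alpha/4$ is legitimate, and that the shift constant $C_{0}$ in the definition of $E(u)$ is chosen so that $E(u)>0$ throughout the evolution, so that $r(t)=\sqrt{E(u)}$ and its time derivative are well defined. Both are standard hypotheses of the SAV framework that have already been invoked when introducing the reformulation \eqref{sinene1}--\eqref{sinene4}.
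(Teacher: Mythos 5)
Your proposal is correct and follows essentially the same route as the paper: testing \eqref{sinene2} with $v=u_{t}$, using the self-adjoint splitting of the fractional Laplacian (Lemma \ref{le1.1}) for the $\kappa$- and $\gamma_{1}$-terms, and multiplying \eqref{sinene3} by $2r(t)$ (a step the paper leaves implicit in its ``combining with \eqref{sinene1} and Leibniz rule'') to cancel the nonlinear term before integrating in time. The only remark worth making is that your dissipation rate correctly comes out as $\gamma_{1}\|(-\Delta)^{\frac{\alpha}{4}}u_{t}\|^{2}$, consistent with the discrete analogue in Theorem \ref{theodiseng}, whereas \eqref{saveng1} displays $(-\Delta)^{\frac{\alpha}{2}}u_{t}$ inside the integral, which appears to be a typographical slip in the paper rather than a defect in your argument.
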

\begin{Rem}
When $\gamma_{1}=\gamma_{2}=0$, the system \eqref{sin1}-\eqref{sin2} deduces to the fractional wave equation. From \eqref{saveng1}, we have $$\frac{dH(t)}{dt}=0,$$ which implies that the energy is conservative.
\end{Rem}
For a positive integer $N$, the function space is denoted by
\begin{equation*}
  X_{N}=\spn\{e^{isx_{1}+ilx_{2}}:-N/2\leq s,l \leq N/2-1\}.
\end{equation*}
Define the orthogonal projection operator $P_{N}$ as follows
\begin{eqnarray*}
  &&(u-P_{N}u,v)=0, \forall v \in X_{N}, ~u \in L^{2}_{per}(\Omega),\\
  &&(-\Delta)^{\alpha/2}P_{N}u=P_{N}(-\Delta)^{\alpha/2}u.
\end{eqnarray*}
 For the temporal discretization, we divide the interval $[0,T]$ %into $N_{1}$ subintervals with
  by a time step size $\tau=T/K$. Let $t_{n}=n\tau$, $u^{n}=u(\mathbf{x},t_{n})$, $0\leq n \leq K$, and denote
\begin{equation}\label{timedi2}
  \delta_{t}u^{n+\frac{1}{2}}=\frac{u^{n+1}-u^{n}}{\tau},~ \tilde{u}^{n+\frac{1}{2}}=\frac{3u^{n}-u^{n-1}}{2},~\bar{u}^{n+\frac{1}{2}}=\frac{u^{n+1}+u^{n}}{2},
\end{equation}

\subsection{The full discrete SAV Fourier spectral method}
The full discrete Fourier spectral method for \eqref{sin1}-\eqref{sin2} is given by: Find $(u_{N}^{n}, ~v_{N}^{n}) \in X_{N} \times X_{N}$, such that for all
$(\psi,~\varphi) \in X_{N} \times X_{N} $,
\begin{eqnarray}
% \nonumber to remove numbering (before each equation)
 &&(\delta_{t}u_{N}^{n+\frac{1}{2}},\psi)=(\bar{v}_{N}^{n+\frac{1}{2}},\psi), \label{22}\\
%&&v_{t}+\gamma v-K_{x}\frac{\partial^{2\alpha_{1}}u}{\partial|x|^{2\alpha_{1}}}
%  -K_{y}\frac{\partial^{2\alpha_{2}}u}{\partial|x|^{2\alpha_{2}}}+f(u)=g(x,y,t) ,  \label{sinene2}\\
  &&(\delta_{t}v_{N}^{n+\frac{1}{2}},\varphi)+\kappa ((-\Delta)^{\frac{\a}{2}}\bar{u}_{N}^{n+\frac{1}{2}},\varphi)+\gamma_{1}((-\Delta)^{\frac{\a}{2}}\bar{v}_{N}^{n+\frac{1}{2}},\varphi)+ \nonumber\\
  &&\gamma_{2}(\bar{v}_{N}^{n+\frac{1}{2}},\varphi)
  +\bar{R}^{n+\frac{1}{2}}\Big(\frac{F'(\tilde{u}_{N}^{n+\frac{1}{2}})}{\sqrt{E(\tilde{u}_{N}^{n+\frac{1}{2}})}},\varphi\Big) =0 ,  \label{23}\\
  &&\delta_{t}R^{n+\frac{1}{2}}=\frac{1}{2\sqrt{E(\tilde{u}_{N}^{n+\frac{1}{2}})}}\Big(F'(\tilde{u}_{N}^{n+\frac{1}{2}}),\delta_{t}u_{N}^{n+\frac{1}{2}}\Big). \label{24}\\
  &&u_{N}^{0}(\mathbf{x})=P_{N}u^{0}(\mathbf{x}), v_{N}^{0}(x)=P_{N}v^{0}(\mathbf{x}), R^{0}=r^{0},~ \mathbf{x}\in \Omega.  \label{244}
\end{eqnarray}
Since the fully discrete system \eqref{22}-\eqref{24} is not self-starting,  the following scheme is used in the first step:
\begin{eqnarray}
% \nonumber to remove numbering (before each equation)
 &&(\frac{\tilde{u}_{N}^{\frac{1}{2}}-u_{N}^{0}}{\tau/2},\psi)=(\tilde{v}_{N}^{\frac{1}{2}},\psi), \label{25}\\
%&&v_{t}+\gamma v-K_{x}\frac{\partial^{2\alpha_{1}}u}{\partial|x|^{2\alpha_{1}}}
%  -K_{y}\frac{\partial^{2\alpha_{2}}u}{\partial|x|^{2\alpha_{2}}}+f(u)=g(x,y,t) ,  \label{sinene2}\\
  &&\bigg(\frac{\tilde{v}_{N}^{\frac{1}{2}}-v_{N}^{0}}{\tau/2},\varphi\bigg)+\kappa ((-\Delta)^{\frac{\a}{2}}\tilde{u}_{N}^{\frac{1}{2}},\varphi)+\gamma_{1}((-\Delta)^{\frac{\a}{2}}\tilde{v}_{N}^{\frac{1}{2}},\varphi)%\nonumber\\
  +\gamma_{2}(\tilde{v}_{N}^{\frac{1}{2}},\varphi)
  +\tilde{R}^{\frac{1}{2}}\bigg(\frac{F'(u_{N}^{0})}{\sqrt{E(u_{N}^{0})}},\varphi\bigg) =0 ,  \label{26}\\
  &&\frac{\tilde{R}^{\frac{1}{2}}-R^{0}}{\tau/2}=\frac{1}{2\sqrt{E(u_{N}^{0})}}\bigg(F'(u_{N}^{0}),\frac{\tilde{u}_{N}^{\frac{1}{2}}-u_{N}^{0}}{\tau/2}\bigg). \label{27}
\end{eqnarray}
\begin{Theo}\label{theodiseng}
(Unconditional full discrete energy dissipation)
  Under the periodic boundary conditions, \eqref{22}-\eqref{24} is dissipative in the sense that
  \begin{equation}\label{diseng}
\mathbf{H}^{n}\leq \mathbf{H}^{n-1}, ~1\leq n \leq K+1,
\end{equation}
where
\begin{equation}\label{diseng1}
\mathbf{H}^{n}=\frac{1}{2}\|v_{N}^{n}\|^{2}+\frac{\kappa}{2}\|(-\Delta)^{\frac{\alpha}{4}}u_{N}^{n}\|^{2}+(R^{n})^{2}.
\end{equation}
\end{Theo}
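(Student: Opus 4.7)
The plan is to mirror the continuous energy argument of Theorem 3.1 at the discrete level, using test-function choices that produce a telescoping identity once the SAV update is coupled with the momentum equation. The whole argument proceeds inside the finite-dimensional space $X_N$, so the main ingredient that makes everything work is the observation that \eqref{22} is not merely a weak identity but actually forces $\delta_{t}u_{N}^{n+\frac{1}{2}}=\bar{v}_{N}^{n+\frac{1}{2}}$ pointwise. Indeed, both sides lie in $X_{N}$, so testing the orthogonality relation \eqref{22} against their difference (also in $X_{N}$) yields equality as elements of $X_{N}$. This identity is the key that allows the SAV nonlinear contribution from \eqref{23} to cancel exactly with the one coming from \eqref{24}.

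Concretely, I would first test \eqref{23} with $\varphi=\bar{v}_{N}^{n+\frac{1}{2}}\in X_{N}$. Using Lemma \ref{le1.1} together with the algebraic identities
\begin{equation*}
(\delta_{t}w^{n+\frac{1}{2}},\bar{w}^{n+\frac{1}{2}})=\frac{\|w^{n+1}\|^{2}-\|w^{n}\|^{2}}{2\tau},
\end{equation*}
this produces the telescoping differences of $\tfrac{1}{2}\|v_{N}^{n}\|^{2}$ and of $\tfrac{\kappa}{2}\|(-\Delta)^{\frac{\alpha}{4}}u_{N}^{n}\|^{2}$, plus the nonnegative damping terms $\gamma_{1}\|(-\Delta)^{\frac{\alpha}{4}}\bar{v}_{N}^{n+\frac{1}{2}}\|^{2}$ and $\gamma_{2}\|\bar{v}_{N}^{n+\frac{1}{2}}\|^{2}$, and one leftover SAV term
\begin{equation*}
\bar{R}^{n+\frac{1}{2}}\frac{\bigl(F'(\tilde{u}_{N}^{n+\frac{1}{2}}),\bar{v}_{N}^{n+\frac{1}{2}}\bigr)}{\sqrt{E(\tilde{u}_{N}^{n+\frac{1}{2}})}}.
\end{equation*}
Invoking the identity from the previous paragraph, I may replace $\bar{v}_{N}^{n+\frac{1}{2}}$ in this last inner product by $\delta_{t}u_{N}^{n+\frac{1}{2}}$.

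Next I multiply \eqref{24} by $2\bar{R}^{n+\frac{1}{2}}$. Since $2\bar{R}^{n+\frac{1}{2}}\delta_{t}R^{n+\frac{1}{2}}=\bigl((R^{n+1})^{2}-(R^{n})^{2}\bigr)/\tau$, this gives
\begin{equation*}
\frac{(R^{n+1})^{2}-(R^{n})^{2}}{\tau}=\bar{R}^{n+\frac{1}{2}}\frac{\bigl(F'(\tilde{u}_{N}^{n+\frac{1}{2}}),\delta_{t}u_{N}^{n+\frac{1}{2}}\bigr)}{\sqrt{E(\tilde{u}_{N}^{n+\frac{1}{2}})}},
\end{equation*}
which is exactly the leftover SAV term from the previous step. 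Substituting this back and multiplying through by $\tau$ eliminates the nonlinear pieces and yields
\begin{equation*}
\mathbf{H}^{n+1}-\mathbf{H}^{n}=-\tau\gamma_{1}\|(-\Delta)^{\frac{\alpha}{4}}\bar{v}_{N}^{n+\frac{1}{2}}\|^{2}-\tau\gamma_{2}\|\bar{v}_{N}^{n+\frac{1}{2}}\|^{2}\leq 0,
\end{equation*}
which is \eqref{diseng} for $n\geq 1$. The first step (from $\mathbf{H}^{0}$ to $\mathbf{H}^{1}$) is handled by the same calculation applied to the start-up scheme \eqref{25}--\eqref{27}, whose structure is identical modulo the symbolic substitutions $\tau\to\tau/2$ and $\bar{(\cdot)}^{n+\frac{1}{2}}\to\tilde{(\cdot)}^{\frac{1}{2}}$.

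The only subtle point, and therefore the one I would highlight in the write-up, is the exact cancellation of the SAV term. It relies on two facts that should be stated cleanly: the pointwise equality $\delta_{t}u_{N}^{n+\frac{1}{2}}=\bar{v}_{N}^{n+\frac{1}{2}}$ coming from \eqref{22} (which in turn uses that $X_N$ is a closed subspace under the discrete operators), and the particular placement of $\bar{R}^{n+\frac{1}{2}}$ in \eqref{23} versus the midpoint combination $2\bar{R}^{n+\frac{1}{2}}\delta_{t}R^{n+\frac{1}{2}}$ produced from \eqref{24}. These two ingredients are precisely what makes the SAV discretization unconditionally dissipative without any Lipschitz-type assumption on $F'$.
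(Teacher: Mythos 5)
Your main calculation is correct and is essentially the paper's own proof: test \eqref{23} with $\bar v_N^{n+\frac12}$, use \eqref{22} to replace $\bar v_N^{n+\frac12}$ by $\delta_t u_N^{n+\frac12}$ in the $\kappa$-term and in the SAV inner product, and cancel the remaining nonlinear term against \eqref{24} multiplied by $2\bar R^{n+\frac12}$. (The paper handles the $\kappa$-term by testing \eqref{22} with $(-\Delta)^{\frac{\alpha}{2}}\bar u_N^{n+\frac12}\in X_N$; your pointwise identity $\delta_t u_N^{n+\frac12}=\bar v_N^{n+\frac12}$, obtained by testing \eqref{22} against the difference of its two sides, is equivalent and in fact makes explicit the substitution in the SAV inner product that the paper leaves implicit.)

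The one inaccurate point is your remark about the first step. The start-up scheme \eqref{25}--\eqref{27} is \emph{not} \eqref{22}--\eqref{24} after the substitutions $\tau\to\tau/2$ and midpoint averages replaced by tilded quantities: it is a backward-Euler-type half step that produces only the intermediate values $\tilde u_N^{\frac12},\tilde v_N^{\frac12},\tilde R^{\frac12}$, not $u_N^{1},v_N^{1},R^{1}$. Testing it with $\tilde v_N^{\frac12}$ does not telescope (one gets terms like $\bigl(\tfrac{\tilde v_N^{\frac12}-v_N^{0}}{\tau/2},\tilde v_N^{\frac12}\bigr)=\tfrac{2}{\tau}\bigl(\|\tilde v_N^{\frac12}\|^{2}-(v_N^{0},\tilde v_N^{\frac12})\bigr)$), and at best it bounds the half-step energy $\tfrac12\|\tilde v_N^{\frac12}\|^{2}+\tfrac{\kappa}{2}\|(-\Delta)^{\frac{\alpha}{4}}\tilde u_N^{\frac12}\|^{2}+(\tilde R^{\frac12})^{2}$ by $\mathbf H^{0}$; it says nothing about $\mathbf H^{1}$. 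No separate argument is actually needed: $\mathbf H^{1}\le\mathbf H^{0}$ follows from your main calculation applied to \eqref{22}--\eqref{24} at $n=0$, since the start-up step only supplies the argument $\tilde u_N^{\frac12}$ of $F'/\sqrt{E}$, and the cancellation between \eqref{23} and \eqref{24} is independent of how that quantity was generated. With that correction your proof coincides with the paper's.
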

\begin{proof}
Similar to the proof for Theorem \ref{theo2.1}, letting $\varphi=\bar{v}_{N}^{n+\frac{1}{2}}$ in \eqref{23}, we  have
\begin{eqnarray*}
% \nonumber to remove numbering (before each equation)
(\delta_{t}v_{N}^{n+\frac{1}{2}},\bar{v}_{N}^{n+\frac{1}{2}})+\kappa ((-\Delta)^{\frac{\a}{2}}\bar{u}_{N}^{n+\frac{1}{2}},\bar{v}_{N}^{n+\frac{1}{2}})+\gamma_{1}\|(-\Delta)^{\frac{\a}{4}}\bar{v}_{N}^{n+\frac{1}{2}}\|^{2}+
\gamma_{2}\|\bar{v}_{N}^{n+\frac{1}{2}}\|^2
  +\bar{R}^{n+\frac{1}{2}}\frac{\Big(F'(\tilde{u}_{N}^{n+\frac{1}{2}}),\bar{v}_{N}^{n+\frac{1}{2}}\Big)}{\sqrt{E(\tilde{u}_{N}^{n+\frac{1}{2}})}}=0 .
\end{eqnarray*}
Taking $\psi=(-\Delta)^{\frac{\a}{2}}\bar{u}_{N}^{n+\frac{1}{2}}$ in \eqref{22}, it yields
\begin{equation*}
(\delta_{t}u_{N}^{n+\frac{1}{2}},(-\Delta)^{\frac{\a}{2}}\bar{u}_{N}^{n+\frac{1}{2}})
=(\bar{v}_{N}^{n+\frac{1}{2}},(-\Delta)^{\frac{\a}{2}}\bar{u}_{N}^{n+\frac{1}{2}}).
\end{equation*}
Using
\begin{equation}\label{normrela}
(\delta_{t}v_{N}^{n+\frac{1}{2}},\bar{v}_{N}^{n+\frac{1}{2}})=\frac{1}{2\tau}(\|v_{N}^{n+1}\|^2-\|v_{N}^{n}\|^2),~
(\delta_{t}u_{N}^{n+\frac{1}{2}},(-\Delta)^{\frac{\a}{2}}\bar{u}_{N}^{n+\frac{1}{2}})=\frac{1}{2\tau}(|u^{n+1}_{N}|^{2}-|u^{n}_{N}|^{2}),
\end{equation}
and \eqref{24}, it directly achieves
%Using $(\delta_{t}v_{N}^{n+\frac{1}{2}},\bar{v}_{N}^{n+\frac{1}{2}})=\frac{1}{2\tau}(\|v_{N}^{n+1}\|^2-\|v_{N}^{n}\|^2)$, and taking $\psi=\delta_{t}u_{N}^{n+\frac{1}{2}}$ to get   %$\delta_{t}u_{N}^{n+\frac{1}{2}}=\bar{v}_{N}^{n+\frac{1}{2}}$,
% $(\delta_{t}u_{N}^{n+\frac{1}{2}},(-\Delta)^{\frac{\a}{2}}\bar{u}_{N}^{n+\frac{1}{2}})=(\bar{v}_{N}^{n+\frac{1}{2}},(-\Delta)^{\frac{\a}{2}}\bar{u}_{N}^{n+\frac{1}{2}})$ as well as together with \eqref{24}, it directly achieves
\begin{small}
\begin{equation*}
  \Big(\frac{1}{2}\|v_{N}^{n+1}\|^{2}+\frac{\kappa}{2}\|(-\Delta)^{\frac{\alpha}{4}}u_{N}^{n+1}\|^{2}+(R^{n+1})^{2}\Big)-\Big(\frac{1}{2}\|v_{N}^{n}\|^{2}
  +\frac{\kappa}{2}\|(-\Delta)^{\frac{\alpha}{4}}u_{N}^{n}\|^{2}+(R^{n})^{2}\Big)+\tau\gamma_{1}\|(-\Delta)^{\frac{\a}{4}}\bar{v}_{N}^{n+\frac{1}{2}}\|^{2}+
\tau\gamma_{2}\|\bar{v}_{N}^{n+\frac{1}{2}}\|^2=0.
\end{equation*}
 \end{small}
\end{proof}

Throughout the paper, denote
C a generic positive constant independent of K and N that may has different values in different
cases. We assume that
\begin{equation}\label{bound1}
  \|u_{0}\|_{H^{m}}+\|u\|_{H^{m}}+|r|_{L^{\infty}(0,T)} +|r_{t}|_{L^{\infty}(0,T)}+|r_{tt}|_{L^{\infty}(0,T)} +|r_{ttt}|_{L^{\infty}(0,T)}+\|u_{t}\|_{H^{m}}+\|u\|_{\a}+\|u_{t}\|_{\a}+\|u_{tt}\|_{\a}+\|u_{ttt}\|_{\a/2}\leq L,
\end{equation}
 where $L$ is a positive constant independent of $N$ and $\tau$.

With the assumption, we present unconditional optimal  error estimates for the FGWE in the following theorem. For  simplicity
, we assume $u^{n}:=u(x,t_{n}) $.
{\color{blue}{  \begin{Theo}\label{theo3.3}
Let $u$ and $\{u_{N}^{n}\}_{n=0}^{K}$ be the solutions of \eqref{sin1}-\eqref{sin2} and \eqref{22}-\eqref{27},respectively. Assume $u$ satisfies \eqref{bound1}. % and $F'(u)$ is bounded for $|u|\leq M_{2}$, $E(u)\geq C^{*}$, where $M_{2}$ and
%and $C$ are  positive constant independent of $N$ and $\tau$.
%Then there exist two positive constants $\tau^{*}$ and $N^{*}$, such that when $\tau \leq \tau^{*}$ and $N\geq N^{*}$,
Then  it holds that \begin{equation}\label{55}
  \|u^{n}-u_{N}^{n}\|^2+\|v^{n}-v_{N}^{n}\|^2+|r^{n}-R^{n}|^2 \leq C(\tau^4+N^{-2m}), ~ \kappa|u^{n}-u_{N}^{n}|^2_{\frac{\a}{2}}\leq C(\tau^4+N^{\a-2m}),
\end{equation}
where $C$ are  positive constants independent of $N$ and $\tau$.
 \end{Theo}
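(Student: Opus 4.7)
The plan is to adapt the temporal-spatial error splitting argument announced in the introduction (the Li-Sun technique as used by Zeng et al., now carried over to the SAV setting). First I would introduce a purely time-discrete SAV solution $(U^n,V^n,\tilde R^n)\in H^{\alpha/2}_{per}(\Omega)$ satisfying the analogue of \eqref{22}-\eqref{24} without spatial truncation (Section 4 of the paper), and split
\[
u^n - u_N^n = (u^n - U^n) + (U^n - P_N U^n) + (P_N U^n - u_N^n),
\]
with analogous decompositions for $v-v_N$ and $r-R$. The middle term is controlled directly by Lemma \ref{le1.5}, so the whole theorem reduces to bounding the temporal error $u^n-U^n$ and the projected spatial error $e_N^n := P_N U^n - u_N^n$.

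For the temporal error (Section 4 in the paper), I would subtract the semi-discrete scheme from the Taylor-expanded continuous system \eqref{sinene1}-\eqref{sinene3} to produce truncation errors of order $\tau^2$ (using the regularity hypotheses \eqref{bound1}, in particular $r_{ttt}\in L^\infty$ and $u_{ttt}\in H^{\alpha/2}$), then mirror the energy argument of Theorem \ref{theodiseng}: test the velocity-equation error with the appropriate $\overline{V-v}^{\,n+1/2}$-type quantity, apply $(-\Delta)^{\alpha/2}$ to the displacement-equation error, combine with the scalar $r$-update and close by a discrete Gronwall inequality to obtain
\[
\|u^n-U^n\|^2+\|v^n-V^n\|^2+|r^n-\tilde R^n|^2+\kappa|u^n-U^n|^2_{\alpha/2} \leq C\tau^4.
\]
Because $\|u^n\|_\infty \leq C$ from \eqref{bound1} and the temporal error is $O(\tau^2)$ in high Sobolev norm, Lemma \ref{le1.3} yields $\|U^n\|_\infty\leq C$ uniformly in $\tau$, which makes the local Lipschitz hypothesis on $F'$ usable along the semi-discrete trajectory without any further smallness assumption.

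The heart of the argument (Sections 5-6) is the spatial error estimate, where the splitting buys us the unconditional $L^\infty$ bound. Writing the equation for $e_N^n$ by applying $P_N$ to the time-discrete system and subtracting \eqref{22}-\eqref{24}, the truncation is driven by $P_N$-commutators with the nonlinear terms, which Lemma \ref{le1.2} combined with the $L^\infty$ bound on $U^n$ controls by $CN^{-m}$ with no $\tau$-dependence. An energy estimate of the same shape as Theorem \ref{theodiseng} together with Lemma \ref{le1.5} then produces
\[
\|e_N^n\|^2+|e_N^n|^2_{\alpha/2} \leq C\bigl(N^{-2m}+N^{\alpha-2m}\bigr),
\]
with constant independent of $\tau$. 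The inverse inequality (Lemma \ref{le1.4}) now gives
\[
\|u_N^n\|_\infty \leq \|P_N U^n\|_\infty + C_2 N\|e_N^n\| \leq C + C N^{1-m+\alpha/2},
\]
which stays bounded for $m\geq 2$ without any restriction of the form $\tau^2 N\leq c$. Armed with this unconditional $L^\infty$ bound on $u_N^n$, the local Lipschitz constant $L_\Omega$ controls every nonlinear difference pointwise, and a final energy estimate for $(e_N^n, V^n-v_N^n, \tilde R^n-R^n)$ together with the temporal bound and the triangle inequality delivers \eqref{55}.

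The main obstacle will be the coupled SAV nonlinearity $\bar R^{n+1/2} F'(\tilde u_N^{n+1/2})/\sqrt{E(\tilde u_N^{n+1/2})}$: estimating its error against the semi-discrete analogue requires simultaneous control of $F'$-differences, $E$-differences, and positivity of $E(\cdot)$ away from zero, all of which rely on the $L^\infty$ bound of $u_N^n$ established using the splitting trick. Because that $L^\infty$ bound itself rests on the nonlinear energy estimate, the whole argument must be organised by induction on $n$: assume \eqref{55} up to level $n$, propagate the $L^\infty$ bound to level $n+1$, and only then close the energy estimate at level $n+1$. A secondary complication is that \eqref{22}-\eqref{24} is not self-starting, so the single-step scheme \eqref{25}-\eqref{27} needs a separate second-order truncation analysis to supply the base case of the induction with starting errors already $O(\tau^2)$ in the norms required by the main energy argument.
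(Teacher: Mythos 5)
Your overall architecture (time-discrete SAV system, unconditional $L^\infty$ bound via the splitting, induction to handle the local Lipschitz nonlinearity, separate treatment of the starting step) matches the paper, but your final composition of the error contains a genuine gap. You propose to obtain \eqref{55} by the triangle inequality
$u^n-u_N^n=(u^n-U^n)+(U^n-P_NU^n)+(P_NU^n-u_N^n)$ and you claim the last piece satisfies
$\|P_NU^n-u_N^n\|^2+|P_NU^n-u_N^n|^2_{\alpha/2}\leq C(N^{-2m}+N^{\alpha-2m})$, arguing that the $P_N$-truncation terms are ``controlled by $CN^{-m}$'' via Lemma \ref{le1.2} and the $L^\infty$ bound on $U^n$. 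That step fails as stated: an $O(N^{-m})$ rate for this comparison requires the time-discrete solution $U^n$ to be bounded in $H^m_{per}$ uniformly in $\tau$ (so that $\|U^n-P_NU^n\|\leq CN^{-m}\|U^n\|_m$ and the consistency terms of the projected semi-discrete system decay at that rate), and neither the regularity assumption \eqref{bound1} (which concerns the exact solution $u$) nor Theorem \ref{theo31} (which controls $U^n$ only through the errors in $\|\cdot\|$, $|\cdot|_{\alpha/2}$ and $|\cdot|_{\alpha}$) provides such a uniform $H^m$ bound. This is precisely why the paper's spatial comparison between $U^n$ and $u_N^n$ (Theorem \ref{theo32}) is stated only at the suboptimal rate $CN^{-2\alpha}$ and is used for one purpose only: to get the $\tau$-independent bound $\|u_N^n\|_\infty\leq M_2$ via the inverse inequality, which works because $\alpha>1$ gives $N^{1-\alpha}\to 0$ (and, incidentally, without your extra restriction $m\geq 2$).

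The optimal rate in \eqref{55} is then obtained in the paper by a different, second comparison that your proposal does not contain: one writes $u^n-u_N^n=\eta_u^n+\xi_u^n$ with $\eta_u^n=u^n-P_Nu^n$ and $\xi_u^n=P_Nu^n-u_N^n$, i.e.\ one projects the \emph{exact} solution (which does lie in $H^m$ by \eqref{bound1}), and performs the energy/Gronwall estimate for $(\xi_u,\xi_v,e_r)$ directly between the continuous truncated equations and the fully discrete scheme, using the already-established $L^\infty$ bounds on $U^n$ and $u_N^n$ to handle the SAV nonlinearity; the $N^{-2m}$ then comes from $\eta_u,\eta_v$ via Lemma \ref{le1.5}, and the $\tau^4$ from the truncation terms $Q_i^{n+1/2}$. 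So your temporal and $L^\infty$-boundedness steps are the right preparation, but the last stage must be reorganized as a direct exact-versus-fully-discrete estimate rather than a triangle inequality through $U^n$, unless you separately prove uniform $H^m$ regularity of the time-discrete solution. A secondary omission: to make \eqref{55} literally unconditional, the paper also disposes of the regimes $\tau\geq\tau^{*}$ or $N\leq N^{*}$ by bounding the errors crudely through the continuous and discrete energy dissipation ($H(t)\leq H(t_0)$, $\mathbf{H}^n\leq\mathbf{H}^0$); your proposal leaves these cases untreated.
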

 }}
Taking technique similar as \cite{MR4045247}, we prove the Theorem \ref{theo3.3} in the next  two sections. We splitting error into two parts, i.e., temporal error and spatial error.
\section{Error analysis for the time-discrete system}
 In this section, we present and analyze the time-discrete  system for the nonlinear FGWE.
%In order to give the time-discrete system, $U^{n}$, $V^{n}$ and $R^{n}$ are the corresponding time discretization of $u^{n}, ~v^{n}, ~r^{n}$ and
We get the following time-discrete scheme based on SAV approach, which define $U^{n}$, $V^{n}$ and $R^{n}$ such that
\begin{eqnarray}
% \nonumber to remove numbering (before each equation)
 &&\delta_{t}U^{n+\frac{1}{2}}=\bar{V}^{n+\frac{1}{2}}, \label{sinets1}\\
%&&v_{t}+\gamma v-K_{x}\frac{\partial^{2\alpha_{1}}u}{\partial|x|^{2\alpha_{1}}}
%  -K_{y}\frac{\partial^{2\alpha_{2}}u}{\partial|x|^{2\alpha_{2}}}+f(u)=g(x,y,t) ,  \label{sinene2}\\
  &&\delta_{t}V^{n+\frac{1}{2}}+\kappa (-\Delta)^{\frac{\a}{2}}\bar{U}^{n+\frac{1}{2}}+\gamma_{1}(-\Delta)^{\frac{\a}{2}}\bar{V}^{n+\frac{1}{2}}+\gamma_{2}\bar{V}^{n+\frac{1}{2}}
  +\bar{R}^{n+\frac{1}{2}}\frac{F'(\tilde{U}^{n+\frac{1}{2}})}{\sqrt{E(\tilde{U}^{n+\frac{1}{2}})}} =0 ,  \label{sinets2}\\
  &&\delta_{t}R^{n+\frac{1}{2}}=\frac{1}{2\sqrt{E(\tilde{U}^{n+\frac{1}{2}})}}\Big(F'(\tilde{U}^{n+\frac{1}{2}}),\delta_{t}U^{n+\frac{1}{2}}\Big), \label{sinets3}\\
  &&U^{0}(\mathbf{x})=u^{0}(\mathbf{x}), V^{0}(x)=v^{0}(\mathbf{x}), R^{0}=r^{0},~ \mathbf{x}\in \Omega.
\end{eqnarray}
%Since the time discrete system \eqref{sinets1}-\eqref{sinets3} is not self-starting, and
While the first step is obtained by
\begin{eqnarray}
% \nonumber to remove numbering (before each equation)
 &&\frac{\tilde{U}^{\frac{1}{2}}-U^{0}}{\tau/2}=\tilde{V}^{\frac{1}{2}}, \label{sinets11}\\
%&&v_{t}+\gamma v-K_{x}\frac{\partial^{2\alpha_{1}}u}{\partial|x|^{2\alpha_{1}}}
%  -K_{y}\frac{\partial^{2\alpha_{2}}u}{\partial|x|^{2\alpha_{2}}}+f(u)=g(x,y,t) ,  \label{sinene2}\\
  &&\frac{\tilde{V}^{\frac{1}{2}}-V^{0}}{\tau/2}+\kappa (-\Delta)^{\frac{\a}{2}}\tilde{U}^{\frac{1}{2}}+\gamma_{1}(-\Delta)^{\frac{\a}{2}}\tilde{V}^{\frac{1}{2}}+\gamma_{2}\tilde{V}^{\frac{1}{2}}
  +\tilde{R}^{\frac{1}{2}}\frac{F'(U^{0})}{\sqrt{E(U^{0})}} =0 ,  \label{sinets22}\\
  &&\frac{\tilde{R}^{\frac{1}{2}}-R^{0}}{\tau/2}=\frac{1}{2\sqrt{E(U^{0})}}\Big(F'(U^{0}),\frac{\tilde{U}^{\frac{1}{2}}-U^{0}}{\tau/2}\Big). \label{sinets33}
\end{eqnarray}
{\color{blue}{
By using the \eqref{timedi2} and denote $\tilde{b}^{n}:= \frac{F'(\tilde{U}^{n+\frac{1}{2}})}{\sqrt{E(\tilde{U}^{n+\frac{1}{2}})}}$, we can get a linear equation from the SAV  scheme \eqref{sinets1}-\eqref{sinets3},
\begin{equation}
  AU^{n+1}+\frac{\tau^{2}}{4}\big(\tilde{b}^{n},U^{n+1}\big)\tilde{b}^{n}=g^{n},\label{algr1}
\end{equation}
where $A=(2+\tau \gamma_{2})I+(\frac{\tau^{2}}{2}\kappa +\tau \gamma_{1})(-\Delta)^{\a/2}$, $g^{n}=\big(A-\tau^{2}\kappa(-\Delta)^{\a/2}\big)U^{n}+2\tau V^{n}+
\frac{\tau^{2}}{4}\big(\tilde{b}^{n},U^{n}\big)\tilde{b}^{n}-\tau^{2}R^{n}\tilde{b}^{n}$.
We can solve above linear equation by first computing $A^{-1}\tilde{b}^{n}$ and $A^{-1}g^{n}$. Thus, it leads to
\begin{equation}
   U^{n+1}+\frac{\tau^{2}}{4}\big(\tilde{b}^{n},U^{n+1}\big)A^{-1}\tilde{b}^{n}=A^{-1}g^{n}.\label{algr2}
\end{equation}
Taking the inner product with $\tilde{b}^{n}$ on the both sides of the above resulting equation to solve $\big(\tilde{b}^{n},U^{n+1}\big)$, then, we can get $U^{n+1}$. Similar to the same solving procedure, we can get $\tilde{U}^{\frac{1}{2}}$ from the Eqs. \eqref{sinets11}- \eqref{sinets33} (see \cite{MR3723659} for more details).
In summary, we solve the schemes \eqref{sinets1}-\eqref{sinets3} by the following main procedure at each time step and the solving procedure of schemes \eqref{sinets11}-\eqref{sinets33} is similar.
\begin{algorithm*}
{\color{blue}
\begin{algorithmic}
  \STATE 1. Assume $U^{n}$, $R^{n}$, $U^{n-1}$, $V^{n}$ and $R^{n-1}$ are known;
  \STATE 2. Compute $A^{-1}\tilde{b}^{n}$ and $A^{-1}g^{n}$;
  \STATE 3. Compute $\big(\tilde{b}^{n},U^{n+1}\big)$ by solving the resulting equation, which is derived from taking the inner product with $\tilde{b}^{n}$ on the both sides of \eqref{algr2};
 \STATE 4. Solve  \eqref{algr1} to obtain $U^{n+1}$;
 \STATE 5. Compute $V^{n+1}$ by solving \eqref{sinets1};
 \STATE 6. Solve \eqref{sinets3} to get $R^{n+1}$.
%  \STATE 4. Get $u^{n+1}_N$ by solve $u^{n+1}_N=\sum^N_{k=0}\Big(\cos\big(\sqrt{|\lambda_m|^{\frac{\a}{2}}}\t\big)\hat{u^n}_k
%+\t\text{sinc}\big(\sqrt{|\lambda_m|^{\frac{\a}{2}}}\t\big)\widehat{v^{(1)}}_k\Big)\widehat{\mathbb{P}}_k(x)$;
%   \STATE 5. Compute $v^{(2)}_N$ by the backward discrete transform, i.e., $$v^{(2)}_N=\sum^N_{k=0}\Big(-\sqrt{|\lambda_m|^{\frac{\a}{2}}}\sin\big(\sqrt{|\lambda_m|^{\frac{\a}{2}}}\t\big)\hat{u^n}_k
%+\cos\big(\sqrt{|\lambda_m|^{\frac{\a}{2}}}\t\big)\widehat{v^{(1)}}_k\Big)\widehat{\mathbb{P}}_k(x);$$
%  \STATE 6.  Solve $v^{n+1}_N=v^{(2)}_N-\frac{\t}{2} f(u^{n+1}_N)$ to get $v^{n+1}_N$.
  \end{algorithmic}}
\end{algorithm*}
}}
\begin{Theo}\label{theosemeng}
(Unconditional energy dissipation)
  Under the periodic boundary conditions, \eqref{sinets1}-\eqref{sinets3} is dissipative in the sense that
  \begin{equation}\label{semeng}
\mathbb{H}^{n+1}\leq \mathbb{H}^{n}, ~0\leq n \leq K,
\end{equation}
where
\begin{equation}\label{semeng1}
\mathbb{H}^{n}=\int_{\Omega}\frac{1}{2}|V^{n}|^{2}+\frac{\kappa}{2}|(-\Delta)^{\frac{\alpha}{4}}U^{n}|^{2}d\mathbf{x}+(R^{n})^{2}.
\end{equation}
\begin{proof}\label{pro1}
Taking the inner product of \eqref{sinets2} with $\bar{V}^{n+\frac{1}{2}}$, combining with \eqref{sinets1}, we get
\begin{equation*}
  (\delta_{t}V^{n+\frac{1}{2}},\bar{V}^{n+\frac{1}{2}})+\kappa ((-\Delta)^{\frac{\a}{2}}\bar{U}^{n+\frac{1}{2}},\delta_{t}U^{n+\frac{1}{2}})+\gamma_{1}\|(-\Delta)^{\frac{\a}{4}}\bar{V}^{n+\frac{1}{2}}\|^2+\gamma_{2}\|\bar{V}^{n+\frac{1}{2}}\|^2
  +\bar{R}^{n+\frac{1}{2}}\frac{\Big(F'(\tilde{U}^{n+\frac{1}{2}}),\bar{V}^{n+\frac{1}{2}}\Big)}{\sqrt{E(\tilde{U}^{n+\frac{1}{2}})}} =0.
\end{equation*}
By using \eqref{sinets3} and
\begin{eqnarray*}
% \nonumber to remove numbering (before each equation)
 (\delta_{t}V^{n+\frac{1}{2}},\bar{V}^{n+\frac{1}{2}})=\frac{1}{2\tau}\Big(\|V^{n+1}\|^2-\|V^{n}\|^2\Big),~((-\Delta)^{\frac{\a}{2}}\bar{U}^{n+\frac{1}{2}},\delta_{t}U^{n+\frac{1}{2}})=\frac{1}{2\tau}\Big(\|(-\Delta)^{\frac{\alpha}{4}}U^{n+1}\|^{2}-\|(-\Delta)^{\frac{\alpha}{4}}U^{n}\|^{2}\Big),
\end{eqnarray*}
%$(\delta_{t}V^{n+\frac{1}{2}},\bar{V}^{n+\frac{1}{2}})=\frac{1}{2}(\|V^{n+1}\|^2-\|V^{n}\|^2)$, %$((-\Delta)^{\frac{\a}{2}}\bar{U}^{n+\frac{1}{2}},\delta_{t}U^{n+\frac{1}{2}})=\frac{1}{2\tau}(|(-\Delta)^{\frac{\alpha}{4}}U^{n+1}|^{2}-|(-\Delta)^{\frac{\alpha}{4}}U^{n}|^{2})$.
%together with \eqref{sinets3}, we can get  the inequality \eqref{semeng}.
we easily obtain \eqref{semeng}.
\end{proof}
\end{Theo}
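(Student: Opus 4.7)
The plan is to discretely mimic the continuous energy estimate from Theorem \ref{theo2.1}. The SAV reformulation \eqref{sinets1}--\eqref{sinets3} was constructed so that the non-quadratic nonlinearity enters only through the scalar auxiliary variable $R^n$; I would exploit this structure to produce a telescoping identity, after which the damping contributes a non-negative dissipation term.

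First I would test \eqref{sinets2} in the $L^2$ inner product against $\bar V^{n+\frac12}$. The time-derivative term immediately yields the discrete energy increment $(\delta_t V^{n+\frac12},\bar V^{n+\frac12}) = \tfrac{1}{2\tau}(\|V^{n+1}\|^2-\|V^n\|^2)$ by the standard Crank--Nicolson identity, while the damping terms contribute $\gamma_1\|(-\Delta)^{\alpha/4}\bar V^{n+\frac12}\|^2+\gamma_2\|\bar V^{n+\frac12}\|^2\ge 0$, which will end up on the dissipation side. For the linear fractional term, I would use \eqref{sinets1} to replace $\bar V^{n+\frac12}$ by $\delta_t U^{n+\frac12}$ and then invoke Lemma \ref{le1.1} to split $(-\Delta)^{\alpha/2}$ symmetrically, giving the telescoping identity $\kappa((-\Delta)^{\alpha/2}\bar U^{n+\frac12},\delta_t U^{n+\frac12}) = \tfrac{\kappa}{2\tau}(\|(-\Delta)^{\alpha/4}U^{n+1}\|^2-\|(-\Delta)^{\alpha/4}U^n\|^2)$.

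The crucial step is the nonlinear term. After substituting $\bar V^{n+\frac12}=\delta_t U^{n+\frac12}$ once more, the auxiliary equation \eqref{sinets3} rewrites the inner product $(F'(\tilde U^{n+\frac12}),\delta_t U^{n+\frac12})/\sqrt{E(\tilde U^{n+\frac12})}$ exactly as $2\,\delta_t R^{n+\frac12}$. Multiplying by $\bar R^{n+\frac12}$ and using $2\bar R^{n+\frac12}\delta_t R^{n+\frac12} = \tfrac{1}{\tau}((R^{n+1})^2-(R^n)^2)$ converts the entire nonlinear contribution into a clean discrete derivative of $R^2$. Combining the three pieces and multiplying by $\tau$ produces $\mathbb H^{n+1}-\mathbb H^n+\tau\gamma_1\|(-\Delta)^{\alpha/4}\bar V^{n+\frac12}\|^2+\tau\gamma_2\|\bar V^{n+\frac12}\|^2=0$, from which \eqref{semeng} is immediate.

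The main obstacle, or really the only delicate point, is making the nonlinearity telescope exactly. This works precisely because $\bar R^{n+\frac12}$ is the arithmetic mean and $\delta_t R^{n+\frac12}$ is the matched centered difference, so $2\bar R^{n+\frac12}\delta_t R^{n+\frac12}$ is a perfect discrete derivative of $R^2$; any mismatch between the averaging in \eqref{sinets2} and the differencing in \eqref{sinets3} would leave an uncontrolled remainder and destroy unconditional dissipation. The argument is independent of $\tau$ and $N$, so no CFL-type restriction is needed, and note that this proof nowhere uses a Lipschitz hypothesis on $F'$, which is exactly the structural advantage of the SAV approach the paper is exploiting.
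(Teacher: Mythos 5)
Your proposal is correct and follows essentially the same route as the paper: testing \eqref{sinets2} with $\bar V^{n+\frac12}$, using \eqref{sinets1} to trade $\bar V^{n+\frac12}$ for $\delta_t U^{n+\frac12}$, invoking the standard Crank--Nicolson telescoping identities for the quadratic terms, and using \eqref{sinets3} so that $2\bar R^{n+\frac12}\delta_t R^{n+\frac12}$ telescopes $(R^n)^2$, leaving the non-negative damping terms as the dissipation. No gaps; this matches the paper's argument.
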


%\begin{Rem}\label{remsem}
%  The energy stability implies $H^{\frac{\a}{2}}$ of numerical solution  $U^{n}$. In fact, the energy stability plays an important role to provide boundedness in $H^{\a}$ estimates for numerical solution $U^{n}$.
%\end{Rem}
\subsection{Error estimate for time discrete scheme}

%From \eqref{semeng}, there exist $M_{1}$ and we can get
%\begin{equation}\label{boud1}
%  |U^{n}|_{\frac{\a}{2}}+\|V^{n}\|+|R^{n}|\leq M_{1}, for ~all~ n, if~ U^{0} \in H^{\frac{\a}{2}}.
%\end{equation}
%\begin{Lem}\label{lem2}
%  The assumption \eqref{bound1} and $U^{0} \in H^{\a}$ hold. Then for all~ $n\leq K+1$, we have
%  \begin{equation}\label{Halpboud}
%    \|V^{n}\|_{\frac{\a}{2}}^{2}+\|U^{n}\|_{\a}^{2}\leq C(M_{1})(T+1)+ \|V^{0}\|_{\frac{\a}{2}}^{2}+\|U^{0}\|_{\a}^{2}
%  \end{equation}
%
%\end{Lem}
Denote $e_{u}^{n}=u^{n}-U^{n}$, $e_{v}^{n}=v^{n}-V^{n}$ and  $e_{r}^{n}=r^{n}-R^{n}$.
\begin{Theo}\label{theo31}
Suppose that \eqref{sin1}-\eqref{sin2} has the unique solution $(u,v,r) \in H^{\a}_{per}(\Omega)\times H^{\a}_{per}(\Omega) \times C^{3}(0,T)$. % $u\in H^{\a}_{per}(\Omega)$, $v\in H^{\a}_{per}(\Omega)$, $r\in C^{3}(0,T)$.
The assumption \eqref{bound1} and $U^{0} \in H^{\a}$ hold. Then for $0\leq n\leq K$, %all~ $n\leq K+1$,
there exists a positive constant $\tau_{1}^{*}$,  such that  \eqref{sinets1}-\eqref{sinets33}  admits  unique solution {\color{blue}$(U^{n},V^{n})\in  H^{\a}_{per}(\Omega)\times H^{\a}_{per}(\Omega)$ } and for $\tau < \tau^{*}_{1}$,
%$U^{n}$, $V^{n}$ when $\tau < \tau^{*}_{1}$, it holds,
\begin{align}
& \|e_{u}^{n}\|^2+\kappa |e_{u}^{n}|_{\frac{\a}{2}}^2+\kappa |e_{u}^{n}|_{\a}^2+\|e_{v}^{n}\|^2+|e_{v}^{n}|_{\frac{\a}{2}}^2
 +(e_{r}^{n})^{2}
\leq C\tau^3, ~ n=0,\ldots,K, \label{errtheo1}\\
%  &&
%    \|V^{n}\|_{\frac{\a}{2}}^{2}+\|U^{n}\|_{\a}^{2}\leq C(M_{1})(T+1)+ \|V^{0}\|_{\frac{\a}{2}}^{2}+\|U^{0}\|_{\a}^{2}\\
&  \|U^{n}\|_{\infty}\leq M,~  n=0,\ldots,K, \label{errtheo2}
\end{align}
where $M>0$ is a bounded constant independent of $K$.
\end{Theo}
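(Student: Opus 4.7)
I will establish \eqref{errtheo1} and \eqref{errtheo2} simultaneously by induction on $n$, mimicking at the discrete level the energy identity used in Theorem~\ref{theosemeng}. The base case $n=1$ is handled by the half-step initialization \eqref{sinets11}-\eqref{sinets33}: under the regularity \eqref{bound1}, Taylor expansion shows that this backward-Euler-type step produces errors bounded by $C\tau^3$ in the norms of \eqref{errtheo1}, while the a priori bound $\|U^1\|_\infty \leq M$ follows from the Sobolev embedding $H^\alpha_{per}(\Omega)\hookrightarrow L^\infty(\Omega)$ (valid in two dimensions since $\alpha>1$). I then assume \eqref{errtheo1}-\eqref{errtheo2} hold at indices $\leq n$ and derive them at $n+1$.

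First I subtract \eqref{sinets1}-\eqref{sinets3} from \eqref{sinene1}-\eqref{sinene3} evaluated at $t_{n+1/2}$ to obtain error equations for $e_u^{n+1/2}, e_v^{n+1/2}, e_r^{n+1/2}$ driven by truncation errors $T_u^n, T_v^n, T_r^n$. Standard Taylor expansion about $t_{n+1/2}$ combined with the bounds on $u_{ttt}$ and $r_{ttt}$ from \eqref{bound1} gives $\|T_u^n\|^2+\|T_v^n\|^2+|T_r^n|^2=O(\tau^4)$ and $\|u^{n+1/2}-\tilde u^{n+1/2}\|_{\alpha}=O(\tau^2)$. I then test the $e_v$-equation with $\bar e_v^{n+1/2}$, test the $e_u$-equation with $\kappa(-\Delta)^{\alpha/2}\bar e_u^{n+1/2}$, multiply the $e_r$-equation by $2\bar e_r^{n+1/2}$, and sum. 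The linear contributions produce the telescoping increment of $\|e_v^n\|^2+\kappa|e_u^n|_{\alpha/2}^2+(e_r^n)^2$ plus the nonnegative dissipation $\gamma_1|\bar e_v^{n+1/2}|_{\alpha/2}^2+\gamma_2\|\bar e_v^{n+1/2}\|^2$, exactly as in the proof of Theorem~\ref{theosemeng}.

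The main obstacle is the nonlinear contribution
\[
r^{n+1/2}\frac{F'(u^{n+1/2})}{\sqrt{E(u^{n+1/2})}}-\bar R^{n+1/2}\frac{F'(\tilde U^{n+1/2})}{\sqrt{E(\tilde U^{n+1/2})}}
\]
and its counterpart in the $r$-equation, which cannot be bounded via a global Lipschitz constant on $F'$. I invoke the $L^\infty$ induction hypothesis together with \eqref{bound1} to obtain $\|\tilde U^{n+1/2}\|_\infty \leq M+C\tau$, on which $F'$ and $F''$ are bounded and locally Lipschitz and $\sqrt{E(\tilde U^{n+1/2})}\geq \sqrt{C_0}>0$. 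I then split each nonlinear difference into factor-wise telescoping pieces (replace $r\leftrightarrow\bar R$, then $F'(u)\leftrightarrow F'(\tilde U)$ via the mean value theorem, then $1/\sqrt{E(u)}\leftrightarrow 1/\sqrt{E(\tilde U)}$ similarly), producing terms controlled by $C(\|e_u^n\|^2+\|e_u^{n-1}\|^2+(e_r^n)^2+(e_r^{n+1})^2+\tau^4)$ after Cauchy--Schwarz and Young's inequality. Summing over $k=1,\ldots,n$ and applying the discrete Gronwall inequality, after absorbing the $(e_r^{n+1})^2$ contribution into the left-hand side for $\tau$ sufficiently small, yields \eqref{errtheo1} at index $n+1$.

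Finally, to recover \eqref{errtheo2} and close the induction, I write $\|U^{n+1}\|_\infty\leq \|u^{n+1}\|_\infty+\|e_u^{n+1}\|_\infty$ and apply the Sobolev embedding $H^\alpha_{per}(\Omega)\hookrightarrow L^\infty(\Omega)$; since \eqref{errtheo1} gives $\|e_u^{n+1}\|^2+|e_u^{n+1}|_{\alpha}^2\leq C\tau^3$, this yields $\|e_u^{n+1}\|_\infty\leq C\tau^{3/2}\leq 1$ whenever $\tau<\tau_1^*$ is small enough, so the choice $M:=\|u\|_{L^\infty((0,T)\times\Omega)}+1$ works. Existence and uniqueness of $(U^{n+1},V^{n+1})\in H^\alpha_{per}\times H^\alpha_{per}$ follow because the resulting linear system \eqref{algr1}-\eqref{algr2} has a symmetric positive definite leading operator $A$ plus a rank-one, nonnegative perturbation, which is invertible via the Sherman--Morrison formula.
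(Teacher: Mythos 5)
There is a genuine gap in your induction step. The energy identity you describe (testing the $e_v$-equation with $\bar e_v^{n+\frac12}$, the $e_u$-equation with $\kappa(-\Delta)^{\frac{\a}{2}}\bar e_u^{n+\frac12}$, and multiplying the $e_r$-equation by $\bar e_r^{n+\frac12}$) only telescopes the quantity $\|e_v^n\|^2+\kappa|e_u^n|_{\frac{\a}{2}}^2+(e_r^n)^2$. It does not produce the terms $\kappa|e_u^n|_{\a}^2$ and $|e_v^n|_{\frac{\a}{2}}^2$ that appear on the left-hand side of \eqref{errtheo1}, and these are not optional: in two dimensions $H^{\a/2}_{per}(\Omega)$ with $\a\le 2$ does not embed into $L^{\infty}(\Omega)$, and for the time-discrete functions $U^n$ no inverse inequality is available (they do not lie in a finite-dimensional space $X_N$), so the only route to $\|e_u^{n+1}\|_{\infty}\le 1$ — and hence to $\|U^{n+1}\|_{\infty}\le M$, which you need to handle the locally Lipschitz nonlinearity at the next step — is through a bound on $|e_u^{n+1}|_{\a}$ (Lemma \ref{le1.3}). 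Your closing paragraph simply asserts ``\eqref{errtheo1} gives $\|e_u^{n+1}\|^2+|e_u^{n+1}|_{\a}^2\le C\tau^3$,'' but that $H^{\a}$ bound is precisely what your energy identity never establishes, so the argument is circular at the point where the induction must be closed.

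The paper fills this gap with a second, higher-order energy estimate that your plan omits: apply $(-\Delta)^{\frac{\a}{2}}$ to the $e_u$-equation and test with $\tau(-\Delta)^{\frac{\a}{2}}\bar e_u^{k+\frac12}$, and test the $e_v$-equation with $2\tau(-\Delta)^{\frac{\a}{2}}\bar e_v^{k+\frac12}$; this yields the increment of $|e_v|_{\frac{\a}{2}}^2+\kappa|e_u|_{\a}^2$, at the price of having to bound $\|(-\Delta)^{\frac{\a}{4}}G^{k+\frac12}\|$, which is done via the fractional product estimate of Lemma \ref{le1.2} together with the boundedness of $\|\tilde U^{k+\frac12}\|_{\infty}$ (and analogously in the first step and at $n=1$, where the paper also needs $|Q_1|_{\a}$, $|Q_2|_{\frac{\a}{2}}$ truncation bounds). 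Only after combining both estimates and applying Gronwall does one obtain $|e_u^{k+1}|_{\a}\le C\tau^2$ and hence $\|U^{k+1}\|_{\infty}\le M$. Your treatment of the truncation errors, of the nonlinear splitting under the local Lipschitz assumption, of the starting half-step, and of the solvability of the linear system (rank-one positive semidefinite perturbation of $A$) is consistent with the paper, but without the additional $(-\Delta)^{\frac{\a}{2}}$-level estimate the proof of \eqref{errtheo1}--\eqref{errtheo2} does not close.
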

\begin{proof}
The \eqref{sinets1}-\eqref{sinets33} can be rewritten as the following elliptic systems
$$\left\{
     \begin{aligned}
&\frac{2\delta_{t}U^{n+\frac{1}{2}}-2V^{n}}{\tau}+\kappa (-\Delta)^{\frac{\a}{2}}\bar{U}^{n+\frac{1}{2}}+\gamma_{1}(-\Delta)^{\frac{\a}{2}}\delta_{t}U^{n+\frac{1}{2}}+\gamma_{2}\delta_{t}U^{n+\frac{1}{2}}
  +\bar{R}^{n+\frac{1}{2}}\frac{F'(\tilde{U}^{n+\frac{1}{2}})}{\sqrt{E(\tilde{U}^{n+\frac{1}{2}})}} =0, \label{semts1}\\
&\delta_{t}R^{n+\frac{1}{2}}=\frac{1}{2\sqrt{E(\tilde{U}^{n+\frac{1}{2}})}}\Big(F'(\tilde{U}^{n+\frac{1}{2}}),\delta_{t}U^{n+\frac{1}{2}}\Big), ~\forall~  n=1,2,\cdots,K, \label{semts3}\\
&\frac{\tilde{U}^{\frac{1}{2}}-U^{0}-\frac{\tau}{2}V^{0}}{\frac{\tau^2}{4}}+\kappa (-\Delta)^{\frac{\a}{2}}\tilde{U}^{\frac{1}{2}}+\gamma_{1}(-\Delta)^{\frac{\a}{2}}\frac{\tilde{U}^{\frac{1}{2}}-U^{0}}{\tau/2}+\gamma_{2}\frac{\tilde{U}^{\frac{1}{2}}-U^{0}}{\tau/2}
  +\tilde{R}^{\frac{1}{2}}\frac{F'(U^{0})}{\sqrt{E(U^{0})}} =0 ,  \label{semts4}\\
&\frac{\tilde{R}^{\frac{1}{2}}-R^{0}}{\tau/2}=\frac{1}{2\sqrt{E(U^{0})}}\Big(F'(U^{0}),\frac{\tilde{U}^{\frac{1}{2}}-U^{0}}{\tau/2}\Big). \label{semts5}
\end{aligned}
\right.$$

%\begin{align}
%&&\frac{2\delta_{t}U^{n+\frac{1}{2}}-2V^{n}}{\tau}+\kappa (-\Delta)^{\frac{\a}{2}}\bar{U}^{n+\frac{1}{2}}+\gamma_{1}(-\Delta)^{\frac{\a}{2}}\delta_{t}U^{n+\frac{1}{2}}+\gamma_{2}\delta_{t}U^{n+\frac{1}{2}}
%  +R^{n+\frac{1}{2}}\frac{F'(\tilde{U}^{n+\frac{1}{2}})}{\sqrt{E(\tilde{U}^{n+\frac{1}{2}})}} =0, \label{semts1}\\
%%&&
%%\delta_{t}R^{n+\frac{1}{2}}=\frac{1}{2\sqrt{E(\tilde{U}^{n+\frac{1}{2}})}}(F'(\tilde{U}^{n+\frac{1}{2}}),\delta_{t}U^{n+\frac{1}{2}}). ~for~  n=1,2,\cdots,N\\ \label{semts3}
%&&\frac{\tilde{U}^{\frac{1}{2}}-U^{0}-\frac{\tau}{2}V^{0}}{\frac{\tau^2}{4}}+\kappa (-\Delta)^{\frac{\a}{2}}\tilde{U}^{\frac{1}{2}}+\gamma_{1}(-\Delta)^{\frac{\a}{2}}\frac{\tilde{U}^{\frac{1}{2}}-U^{0}}{\tau/2}+\gamma_{2}\frac{\tilde{U}^{\frac{1}{2}}-U^{0}}{\tau/2}
%  +\tilde{R}^{\frac{1}{2}}\frac{F'(U^{0})}{\sqrt{E(U^{0})}} =0 ,  \label{semts4}
%%  &&\frac{\tilde{R}^{\frac{1}{2}}-R^{0}}{\tau/2}=\frac{1}{2\sqrt{E(U^{0})}}(F'(U^{0}),\frac{\tilde{U}^{\frac{1}{2}}-U^{0}}{\tau/2}). \label{semts5}
%\end{align}
%\textcolor[rgb]{0.00,0.00,1.00}{The existence and uniqueness of the solution to the linear elliptic equations are straightforward.}
The existence and uniqueness of the solution to the linear elliptic equations are straightforward.
The \eqref{sinene1}-\eqref{sinene3} for time discretization at $t_{n+\frac{1}{2}}$ follow that
\begin{eqnarray}
% \nonumber to remove numbering (before each equation)
 &&\delta_{t}u^{n+\frac{1}{2}}=\bar{v}^{n+\frac{1}{2}}+Q_{1}^{n+\frac{1}{2}}, \label{sinetay1}\\
%&&v_{t}+\gamma v-K_{x}\frac{\partial^{2\alpha_{1}}u}{\partial|x|^{2\alpha_{1}}}
%  -K_{y}\frac{\partial^{2\alpha_{2}}u}{\partial|x|^{2\alpha_{2}}}+f(u)=g(x,y,t) ,  \label{sinene2}\\
  &&\delta_{t}v^{n+\frac{1}{2}}+\kappa (-\Delta)^{\frac{\a}{2}}\bar{u}^{n+\frac{1}{2}}+\gamma_{1}(-\Delta)^{\frac{\a}{2}}\bar{v}^{n+\frac{1}{2}}+\gamma_{2}\bar{v}^{n+\frac{1}{2}}
  +\bar{r}^{n+\frac{1}{2}}\frac{F'(\tilde{u}^{n+\frac{1}{2}})}{\sqrt{E(\tilde{u}^{n+\frac{1}{2}})}} =Q_{2}^{n+\frac{1}{2}} ,  \label{sinetay2}\\
  &&\delta_{t}r^{n+\frac{1}{2}}=\frac{1}{2\sqrt{E(\tilde{u}^{n+\frac{1}{2}})}}(F'(\tilde{u}^{n+\frac{1}{2}}),\delta_{t}u^{n+\frac{1}{2}}) +Q_{3}^{n+\frac{1}{2}},\label{sinetay3}
\end{eqnarray}
where %$$Q_{1}^{n+\frac{1}{2}}=(\delta_{t}u^{n+\frac{1}{2}}-u_{t}^{n+\frac{1}{2}})-(\bar{v}^{n+\frac{1}{2}}-v^{n+\frac{1}{2}}),$$
\begin{align*}
&Q_{1}^{n+\frac{1}{2}}=(\delta_{t}u^{n+\frac{1}{2}}-u_{t}^{n+\frac{1}{2}})-(\bar{v}^{n+\frac{1}{2}}-v^{n+\frac{1}{2}}), \\
&Q_{2}^{n+\frac{1}{2}}=(\delta_{t}v^{n+\frac{1}{2}}-v_{t}^{n+\frac{1}{2}})
+\kappa\Big((-\Delta)^{\frac{\a}{2}}\bar{u}^{n+\frac{1}{2}}-(-\Delta)^{\frac{\a}{2}}u^{n+\frac{1}{2}}\Big)
+\gamma_{1}\Big((-\Delta)^{\frac{\a}{2}}\bar{v}^{n+\frac{1}{2}}-(-\Delta)^{\frac{\a}{2}}v^{n+\frac{1}{2}}]\Big)\\
&~~~~~~~~+\gamma_{2}(\bar{v}^{n+\frac{1}{2}}-v^{n+\frac{1}{2}})
+\bigg(\bar{r}^{n+\frac{1}{2}}\frac{F'(\tilde{u}^{n+\frac{1}{2}})}{\sqrt{E(\tilde{u}^{n+\frac{1}{2}})}}
-r^{n+\frac{1}{2}}\frac{F'(u^{n+\frac{1}{2}})}{\sqrt{E(u^{n+\frac{1}{2}})}}\bigg),\\
&Q_{3}^{n+\frac{1}{2}}=(\delta_{t}r^{n+\frac{1}{2}}-r_{t}^{n+\frac{1}{2}})-
\bigg(\frac{1}{2\sqrt{E(\tilde{u}^{n+\frac{1}{2}})}}(F'(\tilde{u}^{n+\frac{1}{2}}),\delta_{t}u^{n+\frac{1}{2}})
-\frac{1}{2\sqrt{E(u^{n+\frac{1}{2}})}}\int_{\Omega}F'(u^{n+\frac{1}{2}})u_{t}^{n+\frac{1}{2}}d\mathbf{x}\bigg).
\end{align*}
%$$Q_{3}^{n+\frac{1}{2}}=(\delta_{t}r^{n+\frac{1}{2}}-r_{t}^{n+\frac{1}{2}})-
%(\frac{1}{2\sqrt{E(\tilde{u}^{n+\frac{1}{2}})}}(F'(\tilde{u}^{n+\frac{1}{2}}),\delta_{t}u^{n+\frac{1}{2}})
%-\frac{1}{2\sqrt{E(\tilde{u}^{n+\frac{1}{2}})}}\int_{\Omega}F'(u^{n+\frac{1}{2}})u_{t}^{n+\frac{1}{2}}d\mathbf{x}).
%$$
By \eqref{sinene1}-\eqref{sinene3} for time discretization at $t_{\frac{1}{2}}$, {\color{blue}{it follows that}}
\begin{eqnarray}
% \nonumber to remove numbering (before each equation)
 &&\frac{\tilde{u}^{\frac{1}{2}}-u^{0}}{\tau/2}=\tilde{v}^{\frac{1}{2}}+\tilde{Q}_{1}^{\frac{1}{2}}, \label{sinetay11}\\
%&&v_{t}+\gamma v-K_{x}\frac{\partial^{2\alpha_{1}}u}{\partial|x|^{2\alpha_{1}}}
%  -K_{y}\frac{\partial^{2\alpha_{2}}u}{\partial|x|^{2\alpha_{2}}}+f(u)=g(x,y,t) ,  \label{sinene2}\\
  &&\frac{\tilde{v}^{\frac{1}{2}}-v^{0}}{\tau/2}+\kappa (-\Delta)^{\frac{\a}{2}}\tilde{u}^{\frac{1}{2}}+\gamma_{1}(-\Delta)^{\frac{\a}{2}}\tilde{v}^{\frac{1}{2}}+\gamma_{2}\tilde{v}^{\frac{1}{2}}
  +\tilde{r}^{\frac{1}{2}}\frac{F'(u^{0})}{\sqrt{E(u^{0})}} =\tilde{Q}_{2}^{\frac{1}{2}},  \label{sinetay22}\\
  &&\frac{\tilde{r}^{\frac{1}{2}}-r^{0}}{\tau/2}=\frac{1}{2\sqrt{E(u^{0})}}\Big(F'(u^{0}),\frac{\tilde{u}^{\frac{1}{2}}-u^{0}}{\tau/2}\Big)+\tilde{Q}_{3}^{\frac{1}{2}}, \label{sinetay33}
\end{eqnarray}
where
%$$\tilde{Q}_{1}^{\frac{1}{2}}=(\frac{\tilde{u}^{\frac{1}{2}}-u^0}{\tau/2}-u_{t}^{\frac{1}{2}})-(\tilde{v}^{\frac{1}{2}}-v^{\frac{1}{2}})$$,
\begin{align*}
&\tilde{Q}_{1}^{\frac{1}{2}}=\Big(\frac{\tilde{u}^{\frac{1}{2}}-u^0}{\tau/2}-u_{t}^{\frac{1}{2}}\Big)-(\tilde{v}^{\frac{1}{2}}-v^{\frac{1}{2}}),\\
&\tilde{Q}_{2}^{\frac{1}{2}}=\Big(\frac{\tilde{v}^{\frac{1}{2}}-v^0}{\tau/2}-v_{t}^{\frac{1}{2}}\Big)
+\kappa\Big((-\Delta)^{\frac{\a}{2}}\tilde{u}^{\frac{1}{2}}-(-\Delta)^{\frac{\a}{2}}u^{\frac{1}{2}}\Big)
+\gamma_{1}\Big((-\Delta)^{\frac{\a}{2}}\tilde{v}^{\frac{1}{2}}-(-\Delta)^{\frac{\a}{2}}v^{\frac{1}{2}}\Big)\\
&~~~~~~+\gamma_{2}(\tilde{v}^{\frac{1}{2}}-v^{\frac{1}{2}})
+\bigg(\tilde{r}^{\frac{1}{2}}\frac{F'(\tilde{u}^{0})}{\sqrt{E(\tilde{u}^{0)}}}
-r^{\frac{1}{2}}\frac{F'(u^{\frac{1}{2}})}{\sqrt{E(u^{\frac{1}{2}})}}\bigg),\\
&\tilde{Q}_{3}^{\frac{1}{2}}=(\frac{\tilde{r}^{\frac{1}{2}}-r^{0}}{\tau/2}-r_{t}^{\frac{1}{2}})-
\bigg(\frac{1}{2\sqrt{E(u^{0})}}(F'(u^{0}),\frac{\tilde{u}^{\frac{1}{2}}-u^{0}}{\tau/2})
-\frac{1}{2\sqrt{E(u^{\frac{1}{2}})}}\int_{\Omega}F'(u^{n+\frac{1}{2}})u_{t}^{n+\frac{1}{2}}\bigg).
\end{align*}
%$$\tilde{Q}_{3}^{\frac{1}{2}}=(\frac{\tilde{r}^{\frac{1}{2}}-r^{0}}{\tau/2}-r_{t}^{\frac{1}{2}})-
%(\frac{1}{2\sqrt{E(u^{0})}}(F'(u^{0}),\frac{\tilde{u}^{\frac{1}{2}}-u^{0}}{\tau/2})
%-\frac{1}{2\sqrt{E(\tilde{u}^{\frac{1}{2}})}}\int_{\Omega}F'(u^{n+\frac{1}{2}})u_{t}^{n+\frac{1}{2}}).
%$$
 Using Taylor formula, we get
 \begin{eqnarray}
 % \nonumber to remove numbering (before each equation)
\tau \sum_{n=0}^{K}\bigg(\|Q_{1}^{n+\frac{1}{2}}\|+\|Q_{2}^{n+\frac{1}{2}}\|+\|Q_{3}^{n+\frac{1}{2}}\|\bigg)
+\tau \bigg(\|\tilde{Q}_{1}^{\frac{1}{2}}\|+\|\tilde{Q}_{2}^{\frac{1}{2}}\|+\|\tilde{Q}_{3}^{\frac{1}{2}}\|\bigg)\leq C\tau^2.
 \end{eqnarray}

  We will give the proof sketch for \eqref{errtheo1}-\eqref{errtheo2}.

% Multiplying \eqref{semts2} with $(-\Delta)^{\frac{\a}{2}}\bar{V}^{n+\frac{1}{2}}$ and combine \eqref{semts1} with \eqref{semts2}. we can get
% \begin{align}\label{ene1}
% &\|(-\Delta)^{\frac{\a}{4}}V^{n+1}\|^2-\|(-\Delta)^{\frac{\a}{4}}V^{n}\|^2
% +\kappa(\|(-\Delta)^{\frac{\a}{2}}U^{n+1}\|^2-\|(-\Delta)^{\frac{\a}{2}}U^{n}\|^2)\\
% &\leq -R^{n+\frac{1}{2}}
% \frac{2(F'(\tilde{U}^{n+\frac{1}{2}}),(-\Delta)^{\frac{\a}{2}}(U^{n+1}-U^{n}))}{\sqrt{E(\tilde{U}^{n+\frac{1}{2}})}}
% \end{align}
1. The first step: error estimate for $\tilde{e}_{u}^{\frac{1}{2}}=\tilde{u}^{\frac{1}{2}}-\tilde{U}^{\frac{1}{2}}$, $\tilde{e}_{v}^{\frac{1}{2}}=\tilde{v}^{\frac{1}{2}}-\tilde{V}^{\frac{1}{2}}$, $\tilde{e}_{r}^{\frac{1}{2}}=\tilde{r}^{\frac{1}{2}}-\tilde{R}^{\frac{1}{2}}$.

Combining \eqref{sinetay11}-\eqref{sinetay33} and \eqref{sinets11}-\eqref{sinets33}, we arrive at
 \begin{eqnarray}
% \nonumber to remove numbering (before each equation)
 &&\frac{\tilde{e_{u}}^{\frac{1}{2}}}{\tau/2}=\tilde{e}_{v}^{\frac{1}{2}}+\tilde{Q}_{1}^{\frac{1}{2}}, \label{tmdstay1}\\
%&&v_{t}+\gamma v-K_{x}\frac{\partial^{2\alpha_{1}}u}{\partial|x|^{2\alpha_{1}}}
%  -K_{y}\frac{\partial^{2\alpha_{2}}u}{\partial|x|^{2\alpha_{2}}}+f(u)=g(x,y,t) ,  \label{sinene2}\\
  &&\frac{\tilde{e}_{v}^{\frac{1}{2}}}{\tau/2}+\kappa (-\Delta)^{\frac{\a}{2}}\tilde{e}_{u}^{\frac{1}{2}}+\gamma_{1}(-\Delta)^{\frac{\a}{2}}\tilde{e}_{v}^{\frac{1}{2}}
  +\gamma_{2}\tilde{e}_{v}^{\frac{1}{2}}
  + \tilde{G}^{\frac{1}{2}}=\tilde{Q}_{2}^{\frac{1}{2}},  \label{tmdstay2}\\
 && \frac{\tilde{e}_{r}^{\frac{1}{2}}}{\tau/2}=\tilde{F}^{\frac{1}{2}}+\tilde{Q}_{3}^{\frac{1}{2}}, \label{tmdstay3}
\end{eqnarray}
where $$\tilde{G}^{\frac{1}{2}}=\tilde{r}^{\frac{1}{2}}\frac{F'(u^{0})}{\sqrt{E(u^{0})}}-\tilde{R}^{\frac{1}{2}}\frac{F'(U^{0})}{\sqrt{E(U^{0})}},$$
 and $$\tilde{F}^{\frac{1}{2}}=\frac{1}{2\sqrt{E(u^{0})}}\Big(F'(u^{0}),\frac{\tilde{u}^{\frac{1}{2}}-u^{0}}{\tau/2}\Big)
  -\frac{1}{2\sqrt{E(U^{0})}}\Big(F'(U^{0}),\frac{\tilde{U}^{\frac{1}{2}}-U^{0}}{\tau/2}\Big).$$
   Taking the inner product of  $\tilde{e}_{u}^{\frac{1}{2}}$, $\tilde{e}_{v}^{\frac{1}{2}}$ in \eqref{tmdstay1} and \eqref{tmdstay2} respectively, as well as multiplying $\tilde{e}_{r}^{\frac{1}{2}}$  on the both sides of  \eqref{tmdstay3}.
\begin{align}
&\|\tilde{e}_{u}^{\frac{1}{2}}\|^2=\frac{\tau}{2}(\tilde{e}_{u}^{\frac{1}{2}},\tilde{e}_{v}^{\frac{1}{2}})+\frac{\tau}{2} (\tilde{Q}_{1}^{\frac{1}{2}},\tilde{e}_{u}^{\frac{1}{2}}),  \label{tmerr1} \\
& \|\tilde{e}_{v}^{\frac{1}{2}}\|^2+\frac{\tau}{2}\kappa ((-\Delta)^{\frac{\a}{2}}\tilde{e}_{u}^{\frac{1}{2}},\tilde{e}_{v}^{\frac{1}{2}})
+\frac{\tau}{2}\gamma_{1}|\tilde{e}_{v}^{\frac{1}{2}}|_{\frac{\a}{2}}^2
  +\frac{\tau}{2}\gamma_{2}\|\tilde{e}_{v}^{\frac{1}{2}}\|^2
 =-\frac{\tau}{2}(\tilde{G}^{\frac{1}{2}},\tilde{e}_{v}^{\frac{1}{2}})
  +\frac{\tau}{2}(\tilde{Q}_{2}^{\frac{1}{2}},\tilde{e}_{v}^{\frac{1}{2}}), \label{tmerr2}\\
  &(\tilde{e}_{r}^{\frac{1}{2}})^{2}=\frac{\tau}{2}(\tilde{F}^{\frac{1}{2}},\tilde{e}_{r}^{\frac{1}{2}})
  +\frac{\tau}{2}(\tilde{Q}_{3}^{\frac{1}{2}},\tilde{e}_{r}^{\frac{1}{2}}).\label{tmerr3}
\end{align}
Using \eqref{tmdstay1}, the \eqref{tmerr2} can be rewritten as
\begin{equation}\label{tmerr4}
  \|\tilde{e}_{v}^{\frac{1}{2}}\|^2+\kappa |\tilde{e}_{u}^{\frac{1}{2}}|_{\frac{\a}{2}}^2
+\frac{\tau}{2}\gamma_{1}|\tilde{e}_{v}^{\frac{1}{2}}|_{\frac{\a}{2}}^2
  +\frac{\tau}{2}\gamma_{2}\|\tilde{e}_{v}^{\frac{1}{2}}\|^2
 =-\frac{\tau}{2}(\tilde{G}^{\frac{1}{2}},\tilde{e}_{v}^{\frac{1}{2}})
  +\frac{\tau}{2}(\tilde{Q}_{2}^{\frac{1}{2}},\tilde{e}_{v}^{\frac{1}{2}})+\frac{\tau}{2}
  \kappa ((-\Delta)^{\frac{\a}{2}}\tilde{e}_{u}^{\frac{1}{2}},\tilde{Q}_{1}^{\frac{1}{2}}).
\end{equation}
  In addition, $\tilde{G}^{\frac{1}{2}}$ and $\tilde{F}^{\frac{1}{2}}$ could be represented as
  \begin{align}
   &\tilde{G}^{\frac{1}{2}}%=\tilde{r}^{\frac{1}{2}}\frac{F'(u^{0})}{\sqrt{E(u^{0})}}
  % -\tilde{R}^{\frac{1}{2}}\frac{F'(U^{0})}{\sqrt{E(U^{0})}}
   =  \tilde{r}^{\frac{1}{2}}(\frac{F'(u^{0})}{\sqrt{E(u^{0})}}-\frac{F'(U^{0})}{\sqrt{E(U^{0})}})
   +\tilde{e}_{r}^{\frac{1}{2}}\frac{F'(U^{0})}{\sqrt{E(U^{0})}}, \label{tmdsnonerr} \\
  & \tilde{F}^{\frac{1}{2}}%=\frac{1}{2\sqrt{E(u^{0})}}(F'(u^{0}),\frac{\tilde{u}^{\frac{1}{2}}-u^{0}}{\tau/2})
  %-\frac{1}{2\sqrt{E(U^{0})}}(F'(U^{0}),\frac{\tilde{U}^{\frac{1}{2}}-U^{0}}{\tau/2}) \nonumber  \\
 = \bigg(\frac{1}{2\sqrt{E(u^{0})}}F'(u^{0})-\frac{1}{2\sqrt{E(U^{0})}}F'(U^{0}),\frac{\tilde{U}^{\frac{1}{2}}-U^{0}}{\tau/2}\bigg)
  +\bigg(\frac{1}{2\sqrt{E(u^{0})}}F'(u^{0}),\frac{\tilde{e}_{u}^{\frac{1}{2}}}{\tau/2}\bigg). \label{tmdsnonerr1}
  %=(\frac{1}{2\sqrt{E(U^{0})}}F'(U^{0}),\tilde{e}_{v}^{\frac{1}{2}}+\tilde{Q}_{1}^{\frac{1}{2}})
  \end{align}
  From \eqref{tmerr1}-\eqref{tmerr3} and Lemma \ref{le1.1}, using \eqref{tmerr4}-\eqref{tmdsnonerr1} and  using the Cauchy-Schwarz inequality and Young's inequality, we obtain
  \begin{align}
&\|\tilde{e}_{u}^{\frac{1}{2}}\|^2\leq C\tau\|\tilde{e}_{u}^{\frac{1}{2}}\|^2+C\tau \|\tilde{Q}_{1}^{\frac{1}{2}}\|^2+C\tau\|\tilde{e}_{v}^{\frac{1}{2}}\|^2 , \label{tmerr11}  \\
& \|\tilde{e}_{v}^{\frac{1}{2}}\|^2+\kappa |\tilde{e}_{u}^{\frac{1}{2}}|_{\frac{\a}{2}}^2
+\frac{\tau}{2}\gamma_{1}|\tilde{e}_{v}^{\frac{1}{2}}|_{\frac{\a}{2}}^2
  +\frac{\tau}{2}\gamma_{2}\|\tilde{e}_{v}^{\frac{1}{2}}\|^2
 \leq C\tau(\tilde{e}_{r}^{\frac{1}{2}})^2+C\tau\|\tilde{e}_{v}^{\frac{1}{2}}\|^2
 +C\tau\|\tilde{Q}_{2}^{\frac{1}{2}}\|^2  \nonumber  \\
 &~~~~~~~~~~~~~~~~~~~~~~~~~~~~~~~ ~~~~~~~~~~~~~~~~~~~~~~~~~~~~~+C\tau
 |\tilde{e}_{u}^{\frac{1}{2}}|_{\frac{\a}{2}}^{2}+C\tau|\tilde{Q}_{1}^{\frac{1}{2}}|_{\frac{\a}{2}}^{2}, \label{tmerr22} \\
  & (\tilde{e}_{r}^{\frac{1}{2}})^{2}\leq C\tau\|\tilde{e}_{v}^{\frac{1}{2}}\|^2+C\tau (\tilde{e}_{r}^{\frac{1}{2}})^2
  +C\tau\|\tilde{Q}_{3}^{\frac{1}{2}}\|^2+ C\tau \|\tilde{Q}_{1}^{\frac{1}{2}}\|^2.\label{tmerr33}
\end{align}
By combining the above equations, we can obtain
\begin{align}
 \|\tilde{e}_{u}^{\frac{1}{2}}\|^2+\kappa |\tilde{e}_{u}^{\frac{1}{2}}|_{\frac{\a}{2}}^2+\|\tilde{e}_{v}^{\frac{1}{2}}\|^2
 +(\tilde{e}_{r}^{\frac{1}{2}})^{2}\leq
 C\tau  \Big(|\tilde{Q}_{1}^{\frac{1}{2}}|_{\frac{\a}{2}}^{2}+\|\tilde{Q}_{2}^{\frac{1}{2}}\|^2+
 \|\tilde{Q}_{3}^{\frac{1}{2}}\|^2\Big)\leq C\tau^3. \label{timerrtheo1}
\end{align}
Moreover, take the inner product of \eqref{tmdstay2} by $(-\Delta)^{\frac{\a}{2}}\tilde{e}_{v}^{\frac{1}{2}}$ to get
\begin{align}\label{alfaer1}
 |\tilde{e}_{v}^{\frac{1}{2}}|_{\frac{\a}{2}}^2+\frac{\tau}{2}\kappa ((-\Delta)^{\frac{\a}{2}}\tilde{e}_{u}^{\frac{1}{2}},(-\Delta)^{\frac{\a}{2}}\tilde{e}_{v}^{\frac{1}{2}})
+\frac{\tau}{2}\gamma_{1}|\tilde{e}_{v}^{\frac{1}{2}}|_{\a}^2
  +\frac{\tau}{2}\gamma_{2}|\tilde{e}_{v}^{\frac{1}{2}}|_{\frac{\a}{2}}^2
 =-\frac{\tau}{2}(\tilde{G}^{\frac{1}{2}}
  +\tilde{Q}_{2}^{\frac{1}{2}},(-\Delta)^{\frac{\a}{2}}\tilde{e}_{v}^{\frac{1}{2}}).
\end{align}
Using $\frac{(-\Delta)^{\frac{\a}{2}}\tilde{e}_{u}^{\frac{1}{2}}}{\tau/2}=(-\Delta)^{\frac{\a}{2}}\tilde{e}_{v}^{\frac{1}{2}}+(-\Delta)^{\frac{\a}{2}}\tilde{Q}_{1}^{\frac{1}{2}}$, \eqref{alfaer1} can be rewritten as
\begin{align*}%\label{alfaer2}
 &|\tilde{e}_{v}^{\frac{1}{2}}|_{\frac{\a}{2}}^2+\kappa |\tilde{e}_{u}^{\frac{1}{2}}|_{\a}^{2}
+\frac{\tau}{2}\gamma_{1}|\tilde{e}_{v}^{\frac{1}{2}}|_{\a}^2
  +\frac{\tau}{2}\gamma_{2}|\tilde{e}_{v}^{\frac{1}{2}}|_{\frac{\a}{2}}^2 %\nonumber \\
 \leq \frac{\tau}{2}\bigg|(\tilde{G}^{\frac{1}{2}},(-\Delta)^{\frac{\a}{2}}\tilde{e}_{v}^{\frac{1}{2}})
  +(\tilde{Q}_{2}^{\frac{1}{2}},(-\Delta)^{\frac{\a}{2}}\tilde{e}_{v}^{\frac{1}{2}})
  +\kappa ((-\Delta)^{\frac{\a}{2}}\tilde{Q}_{1}^{\frac{1}{2}},(-\Delta)^{\frac{\a}{2}}\tilde{e}_{u}^{\frac{1}{2}})\bigg|.
\end{align*}
From Lemma \ref{le1.1}, the three terms on the right side of the above equation %\eqref{alfaer2}
can be controlled by
\begin{align}
 & |(\tilde{G}^{\frac{1}{2}},(-\Delta)^{\frac{\a}{2}}\tilde{e}_{v}^{\frac{1}{2}})|
  \leq \Big|\tilde{e}_{r}^{\frac{1}{2}}(\frac{1}{\sqrt{E(U^{0})}}F'(U^{0}),(-\Delta)^{\frac{\a}{2}}\tilde{e}_{v}^{\frac{1}{2}})\Big|
  \leq C(L)(\tilde{e}_{r}^{\frac{1}{2}})^2+\frac{c_1}{2}|\tilde{e}_{v}^{\frac{1}{2}}|_{\frac{\a}{2}}^{2}, \label{tims2}\\
 & |(\tilde{Q}_{2}^{\frac{1}{2}},(-\Delta)^{\frac{\a}{2}}\tilde{e}_{v}^{\frac{1}{2}}) |
  \leq \frac{c_2}{2}|\tilde{e}_{v}^{\frac{1}{2}}|_{\frac{\a}{2}}^{2}+\frac{c_3}{2}|\tilde{Q}_{2}^{\frac{1}{2}}|_{\frac{\a}{2}}^{2},\label{tims3}\\
 & \Big|((-\Delta)^{\frac{\a}{2}}\tilde{Q}_{1}^{\frac{1}{2}},(-\Delta)^{\frac{\a}{2}}\tilde{e}_{u}^{\frac{1}{2}})\Big |
  \leq \frac{c_4}{2}|\tilde{e}_{u}^{\frac{1}{2}}|_{\a}^{2}+\frac{c_5}{2}|\tilde{Q}_{1}^{\frac{1}{2}}|_{\a}^{2}.\label{tims4}
\end{align}
%Letting $2\gamma_{1}\geq (c_1+c_4)$,
From \eqref{timerrtheo1} and \eqref{tims2}-\eqref{tims4},
then we can get
\begin{equation}\label{timerrtheo2}
 \|\tilde{e}_{u}^{\frac{1}{2}}\|^2+\kappa |\tilde{e}_{u}^{\frac{1}{2}}|_{\frac{\a}{2}}^2+\|\tilde{e}_{v}^{\frac{1}{2}}\|^2
 +(\tilde{e}_{r}^{\frac{1}{2}})^{2}+ |\tilde{e}_{v}^{\frac{1}{2}}|_{\frac{\a}{2}}^2+\kappa |\tilde{e}_{u}^{\frac{1}{2}}|_{\a}^{2} \leq C\tau^3.
\end{equation}
By virtue of Lemma \ref{le1.3}, \eqref{timerrtheo1} and \eqref{timerrtheo2}, we have
\begin{equation}\label{timeboutheo1}
  \|\tilde{U}^{\frac{1}{2}}\|_{\infty}\leq  \|\tilde{u}^{\frac{1}{2}}\|_{\infty}+\|\tilde{e}_{u}^{\frac{1}{2}}\|_{\infty}\leq \|\tilde{u}^{\frac{1}{2}}\|_{\infty}+C|\tilde{e}_{u}^{\frac{1}{2}}|_{\a} \leq 1+L\leq M,
\end{equation}
where $\tau \leq C^{-\frac{2}{3}}$.

2. The second step: estimate $e_{u}^{1}$, $e_{v}^{1}$ and $e_{r}^{1}$.

Subtract \eqref{sinets1}-\eqref{sinets3} from \eqref{sinetay1}-\eqref{sinetay3} to obtain for $n=0$.
\begin{eqnarray}
% \nonumber to remove numbering (before each equation)
 &&\delta_{t}e_{u}^{\frac{1}{2}}=\bar{e}_{v}^{\frac{1}{2}}+Q_{1}^{\frac{1}{2}}, \label{1stmerr1}\\
%&&v_{t}+\gamma v-K_{x}\frac{\partial^{2\alpha_{1}}u}{\partial|x|^{2\alpha_{1}}}
%  -K_{y}\frac{\partial^{2\alpha_{2}}u}{\partial|x|^{2\alpha_{2}}}+f(u)=g(x,y,t) ,  \label{sinene2}\\
  &&\delta_{t}e_{v}^{\frac{1}{2}}+\kappa (-\Delta)^{\frac{\a}{2}}\bar{e}_{u}^{\frac{1}{2}}+\gamma_{1}(-\Delta)^{\frac{\a}{2}}\bar{e}_{v}^{\frac{1}{2}}+\gamma_{2}\bar{e}_{v}^{\frac{1}{2}}
  +G^{\frac{1}{2}} =Q_{2}^{\frac{1}{2}} ,  \label{1stmerr2}\\
  &&\delta_{t}e_{r}^{\frac{1}{2}}=F^{\frac{1}{2}} +Q_{3}^{\frac{1}{2}},\label{1stmerr3}
\end{eqnarray}
where $$G^{\frac{1}{2}}=r^{\frac{1}{2}}\frac{F'(\tilde{u}^{\frac{1}{2}})}{\sqrt{E(\tilde{u}^{\frac{1}{2}})}}
-R^{\frac{1}{2}}\frac{F'(\tilde{U}^{\frac{1}{2}})}{\sqrt{E(\tilde{U}^{\frac{1}{2}})}},$$ $$F^{\frac{1}{2}}=\frac{1}{2\sqrt{E(\tilde{u}^{\frac{1}{2}})}}(F'(\tilde{u}^{\frac{1}{2}}),\delta_{t}u^{\frac{1}{2}})-
\frac{1}{2\sqrt{E(\tilde{U}^{\frac{1}{2}})}}(F'(\tilde{U}^{\frac{1}{2}}),\delta_{t}U^{\frac{1}{2}}) .$$
It is obvious that
\begin{small}
\begin{eqnarray}
% \nonumber to remove numbering (before each equation)
&&  G^{\frac{1}{2}}=r^{\frac{1}{2}}(\frac{F'(\tilde{u}^{\frac{1}{2}})}{\sqrt{E(\tilde{u}^{\frac{1}{2}})}}
  -\frac{F'(\tilde{U}^{\frac{1}{2}})}{\sqrt{E(\tilde{U}^{\frac{1}{2}})}})+
\bar{e}_{r}^{\frac{1}{2}}\frac{F'(\tilde{U}^{\frac{1}{2}})}{\sqrt{E(\tilde{U}^{\frac{1}{2}})}}, \nonumber\\ %\label{tims1}
&&F^{\frac{1}{2}}=(\frac{1}{2\sqrt{E(\tilde{u}^{\frac{1}{2}})}}F'(\tilde{u}^{\frac{1}{2}})
-\frac{1}{2\sqrt{E(\tilde{U}^{\frac{1}{2}})}}F'(\tilde{U}^{\frac{1}{2}}),\delta_{t}u^{\frac{1}{2}})
+(\frac{1}{2\sqrt{E(\tilde{U}^{\frac{1}{2}})}}F'(\tilde{U}^{\frac{1}{2}}),\delta_{t}e_{u}^{\frac{1}{2}}),\nonumber\\ %\label{tims2}\\
&&\frac{1}{\sqrt{E(\tilde{u}^{\frac{1}{2}})}}F'(\tilde{u}^{\frac{1}{2}})
-\frac{1}{\sqrt{E(\tilde{U}^{\frac{1}{2}})}}F'(\tilde{U}^{\frac{1}{2}})=
\frac{F'(\tilde{u}^{\frac{1}{2}})-F'(\tilde{U}^{\frac{1}{2}})}{\sqrt{E(\tilde{u}^{\frac{1}{2}})}}+
\frac{F'(\tilde{U}^{\frac{1}{2}})(E(\tilde{u}^{\frac{1}{2}})
-E(\tilde{U}^{\frac{1}{2}}))}
{\sqrt{E(\tilde{U}^{\frac{1}{2}})}\sqrt{E(\tilde{u}^{\frac{1}{2}})}
(\sqrt{E(\tilde{u}^{\frac{1}{2}})}+\sqrt{E(\tilde{U}^{\frac{1}{2}})})}. \nonumber %\label{tims3}
\end{eqnarray}
\end{small}
Since $e_{u}^{0}=0$, $e_{v}^{0}=0$, and $e_{r}^{0}=0$, then taking inner product of $2\tau e_{u}^{1}$ and $2\tau e_{v}^{1}$ in \eqref{1stmerr1}-\eqref{1stmerr2}, and   multiplying $2\tau e_{r}^{1}$ on the both sides of \eqref{1stmerr3}, we have
\begin{eqnarray}
% \nonumber to remove numbering (before each equation)
&&2\|e_{u}^{1}\|^{2}=\tau(e_{v}^{1},e_{u}^{1})+2\tau(Q_{1}^{\frac{1}{2}},e_{u}^{1}), \label{1stmerrn1}\\
%&&v_{t}+\gamma v-K_{x}\frac{\partial^{2\alpha_{1}}u}{\partial|x|^{2\alpha_{1}}}
%  -K_{y}\frac{\partial^{2\alpha_{2}}u}{\partial|x|^{2\alpha_{2}}}+f(u)=g(x,y,t) ,  \label{sinene2}\\
  &&2\|e_{v}^{1}\|^{2}+\kappa \tau((-\Delta)^{\frac{\a}{2}}e_{u}^{1},e_{v}^{1})+\tau \gamma_{1}((-\Delta)^{\frac{\a}{2}}e_{v}^{1},e_{v}^{1})+\gamma_{2}\tau (e_{v}^{1},e_{v}^{1})
  +2\tau (G^{\frac{1}{2}},e_{v}^{1}) =2\tau(Q_{2}^{\frac{1}{2}},e_{v}^{1}) ,  \label{1stmerrn2}\\
  &&(e_{r}^{1})^{2}=\tau(F^{\frac{1}{2}},e_{r}^{1}) +\tau(Q_{3}^{\frac{1}{2}},e_{r}^{1}).\label{1stmerrn3}
\end{eqnarray}

By using the Cauchy-Schwarz inequality and Young's inequality, we obtain
\begin{align}
 &\|e_{u}^{1}\|^{2}\leq C\tau \|e_{v}^{1}\|^2+C\tau \|e_{u}^{1}\|^{2}+C\tau \|Q_{1}^{\frac{1}{2}}\|^{2}. \label{1sterr1}\\
 &2\|e_{v}^{1}\|^{2}+2\kappa |e_{u}^{1}|_{\frac{\a}{2}}^{2}+\tau \gamma_{1}|e_{v}^{1}|_{\frac{\a}{2}}^{2}+ \gamma_{2}\tau \|e_{v}^{1}\|^2 \leq C\tau \bigg(|(G^{\frac{1}{2}}, e_{v}^{1})|+  \|e_{v}^{1}\|^2+ \|Q_{2}^{\frac{1}{2}}\|^2+|((-\Delta)^{\frac{\a}{4}}e_{u}^{1},(-\Delta)^{\frac{\a}{4}}Q_{1}^{\frac{1}{2}})|\bigg) \nonumber\\
 &~~~~~~~~~~~~~~~~~~~~~~~~~~~~~~~~~~~~~~~~~~~\leq C\tau |(G^{\frac{1}{2}}, e_{v}^{1})|+ C\tau \|e_{v}^{1}\|^2+C\tau \|Q_{2}^{\frac{1}{2}}\|^2+C\tau|e_{u}^{1}|_{\frac{\a}{2}}^2+C\tau|Q_{1}^{\frac{1}{2}}|_{\frac{\a}{2}}^2, \label{1sterr2}\\
&(e_{r}^{1})^{2}\leq C\tau |(F^{\frac{1}{2}},e_{r}^{1})| + C\tau \|e_{r}^{1}\|^{2} +C\tau \|Q_{3}^{\frac{1}{2}}\|^2. \label{1sterr3}
\end{align}
Since $|\tilde{U}^{\frac{1}{2}}|_{\infty}$ is bounded,
\begin{align}
&|(G^{\frac{1}{2}},e_{v}^{1})| \leq  \bigg(|r^{\frac{1}{2}}|_{\infty}|\tilde{e}_{u}^{\frac{1}{2}}|+
|e_{r}^{\frac{1}{2}}|\Big\|\frac{F'(\tilde{U}^{\frac{1}{2}})}{\sqrt{E(\tilde{U}^{\frac{1}{2}})}}\Big\|_{\infty}\bigg)|e_{v}^{1}|\leq C\|\tilde{e}_{u}^{\frac{1}{2}}\|^2+C(e_{r}^{\frac{1}{2}})^2+C\|e_{v}^{1}\|^2, \label{1strr1}\\
&|(F^{\frac{1}{2}},e_{r}^{1})| \leq C|e_{r}^{1}|\bigg|\Big(\frac{1}{2\sqrt{E(\tilde{u}^{\frac{1}{2}})}}F'(\tilde{u}^{\frac{1}{2}})
-\frac{1}{2\sqrt{E(\tilde{U}^{\frac{1}{2}})}}F'(\tilde{U}^{\frac{1}{2}}),\frac{u^{1}-u^{0}}{\tau}\Big)\bigg|
+|e_{r}^{1}|\bigg|(\frac{1}{2\sqrt{E(\tilde{U}^{\frac{1}{2}})}}F'(\tilde{U}^{\frac{1}{2}}),\delta_{t}e_{u}^{\frac{1}{2}})\bigg|. \nonumber
\end{align}
where $$u^{1}-u^{0}=\tau u_{t}({t_{\frac{1}{2}}})-\frac{1}{2}\bigg(\int_{t_{0}}^{t_{\frac{1}{2}}}(t_{0}-s)^2\frac{\partial^{3} u}{\partial t^{3}}(s)ds+\int_{t_{1}}^{t_{\frac{1}{2}}}(t_{1}-s)^2\frac{\partial^{3} u}{\partial t^{3}}(s)ds\bigg).$$ From the above equations and \eqref{1stmerr1}, using Cauchy-Schwarz inequality and Young's inequality,
Thus,
\begin{align}\label{er1}
\tau|(F^{\frac{1}{2}},e_{r}^{1})| \leq C \tau |u_{t}({t_{\frac{1}{2}}})|_{\infty}((e_{r}^{1})^2+\|\tilde{e}_{u}^{\frac{1}{2}}\|^2)+C\tau^{4}\int_{t_{0}}^{t_{1}}\|\frac{\partial^{3}u}{\partial t^{3}}(s)\|^2ds
+C\tau(e_{r}^{1})^2+C\tau \|e_{v}^{1}\|^2 +C \tau \|Q_{1}^{\frac{1}{2}}\|^2.
\end{align}
Combining \eqref{timerrtheo2} and \eqref{1sterr1}-\eqref{er1},
we  get
{\color{blue}
\begin{align}\label{er2st1}
&\|e_{u}^{1}\|^{2}+ \|e_{v}^{1}\|^{2}+\kappa |e_{u}^{1}|_{\frac{\a}{2}}^{2}+(e_{r}^{1})^{2}\leq
C\tau^{4}\int_{t_{0}}^{t_{1}}\|\frac{\partial^{3}u}{\partial t^{3}}(s)\|^2ds+  \nonumber \\
&~~~~~~~~~~~~~~~~C \tau ( \|Q_{2}^{\frac{1}{2}}\|^2+ |Q_{1}^{\frac{1}{2}}|_{\frac{\a}{2}}^2+ \|Q_{3}^{\frac{1}{2}}\|^2)+C\tau\|\tilde{e}_{u}^{\frac{1}{2}}\|^2 \leq C\tau^4.
\end{align}
}
 %Estimate $|e_{u}^{1}|_{\a}$.
 Applying $(-\Delta)^{\frac{\a}{2}}$ on the both sides of \eqref{1stmerr1}, taking the inner product of the resulting equation with $2\tau (-\Delta)^{\frac{\a}{2}}e_{u}^{1}$, one arrives at
 \begin{equation}\label{2stalp}
   2|e_{u}^{1}|_{\a}^{2}=\tau((-\Delta)^{\frac{\a}{2}}e_{v}^{1},(-\Delta)^{\frac{\a}{2}}e_{u}^{1})
   +2\tau((-\Delta)^{\frac{\a}{2}}Q_{1}^{\frac{1}{2}},(-\Delta)^{\frac{\a}{2}}e_{u}^{1}).
 \end{equation}
 Taking the inner product of \eqref{1stmerr2}  with $2\tau (-\Delta)^{\frac{\a}{2}}e_{v}^{1}$ and using \eqref{2stalp}, we obtain
\begin{align}\label{2stalp1}
2|e_{v}^{1}|_{\frac{\a}{2}}^{2}+2\kappa|e_{u}^{1}|_{\a}^{2} -2\kappa\tau((-\Delta)^{\frac{\a}{2}}Q_{1}^{\frac{1}{2}},(-\Delta)^{\frac{\a}{2}}e_{u}^{1})
+\gamma_{1}|e_{v}^{1}|_{\a}^{2}+ \nonumber \\
\gamma_{2}\tau |e_{v}^{1}|_{\frac{\a}{2}}^{2}+2\tau(G^{\frac{1}{2}},(-\Delta)^{\frac{\a}{2}}e_{v}^{1})
=2\tau(Q_{2}^{\frac{1}{2}},(-\Delta)^{\frac{\a}{2}}e_{v}^{1}).
\end{align}
By the Cauchy-Schwarz inequality and Young's inequality,
it leads to from the equation \eqref{2stalp1}
\begin{equation}\label{2stalp1}
2|e_{v}^{1}|_{\frac{\a}{2}}^{2}+2\kappa|e_{u}^{1}|_{\a}^{2}
\leq C\tau\|(-\Delta)^{\frac{\a}{4}}G^{\frac{1}{2}}\|^2+C\tau |e_{v}^{1}|_{\frac{\a}{2}}^{2}+
C\tau\|(-\Delta)^{\frac{\a}{4}}Q_{2}^{\frac{1}{2}}\|^{2}
+C\tau\|(-\Delta)^{\frac{\a}{2}}Q_{1}^{\frac{1}{2}}\|^2+C\tau|e_{u}^{1}|_{\a}^{2}.
\end{equation}
Meanwhile, we have
 \begin{small}
\begin{align}
 &\|(-\Delta)^{\frac{\a}{4}}G^{\frac{1}{2}}\|^2=\bigg\|(-\Delta)^{\frac{\a}{4}}\Big(r^{\frac{1}{2}}
 \Big(\frac{F'(\tilde{u}^{\frac{1}{2}})}{\sqrt{E(\tilde{u}^{\frac{1}{2}})}}
  -\frac{F'(\tilde{U}^{\frac{1}{2}})}{\sqrt{E(\tilde{U}^{\frac{1}{2}})}}\Big)+
\bar{e}_{r}^{\frac{1}{2}}\frac{F'(\tilde{U}^{\frac{1}{2}})}{\sqrt{E(\tilde{U}^{\frac{1}{2}})}}\Big)\bigg\|^2 \nonumber \\
&\leq |r^{\frac{1}{2}}|_{\infty}^2\bigg\|(-\Delta)^{\frac{\a}{4}}
\Big(\frac{F'(\tilde{u}^{\frac{1}{2}})}{\sqrt{E(\tilde{u}^{\frac{1}{2}})}}
  -\frac{F'(\tilde{U}^{\frac{1}{2}})}{\sqrt{E(\tilde{U}^{\frac{1}{2}})}}\Big)\bigg\|^2
  +(e_{r}^{1})^2\Big\|(-\Delta)^{\frac{\a}{4}}
  \frac{F'(\tilde{U}^{\frac{1}{2}})}{\sqrt{E(\tilde{U}^{\frac{1}{2}})}}\Big\|^2, \label{midie1}
  %\leq C|\tilde{e}_{u}^{\frac{1}{2}}|_{\frac{\a}{2}}^2+C(e_{r}^{\frac{1}{2}})^2  \nonumber \\
\end{align}
\end{small}
 and  by  using Lemma \ref{le1.2}, we obtain
 {\color{blue}
\begin{align}\label{midie2}
&\bigg\|(-\Delta)^{\frac{\a}{4}}
\Big(\frac{F'(\tilde{u}^{\frac{1}{2}})}{\sqrt{E(\tilde{u}^{\frac{1}{2}})}}
  -\frac{F'(\tilde{U}^{\frac{1}{2}})}{\sqrt{E(\tilde{U}^{\frac{1}{2}})}}\Big)\bigg\|^2\leq \bigg\|(-\Delta)^{\frac{\a}{4}}\frac{F'(\tilde{U}^{\frac{1}{2}})(E(\tilde{u}^{\frac{1}{2}})
-E(\tilde{U}^{\frac{1}{2}}))}
{\sqrt{E(\tilde{U}^{\frac{1}{2}})}\sqrt{E(\tilde{u}^{\frac{1}{2}})}
\Big(\sqrt{E(\tilde{u}^{\frac{1}{2}})}+\sqrt{E(\tilde{U}^{\frac{1}{2}})}\Big)}\bigg\|^2 \nonumber\\
&+ \bigg\|(-\Delta)^{\frac{\a}{4}}\frac{F'(\tilde{u}^{\frac{1}{2}})-F'(\tilde{U}^{\frac{1}{2}})}{\sqrt{E(\tilde{u}^{\frac{1}{2}})}}
\bigg\|^2
 \leq \bigg\|(-\Delta)^{\frac{\a}{4}}\frac{\partial_{u}F'(\tilde{\xi}) (\tilde{u}^{\frac{1}{2}}-\tilde{U}^{\frac{1}{2}})}{\sqrt{E(\tilde{u}^{\frac{1}{2}})}}\bigg\|^2+C|E(\tilde{u}^{\frac{1}{2}})
-E(\tilde{U}^{\frac{1}{2}})|^2\bigg\|(-\Delta)^{\frac{\a}{4}}F'(\tilde{U}^{\frac{1}{2}})\bigg\|^2\nonumber\\
&\leq C|\tilde{e}_{u}^{\frac{1}{2}}|_{\frac{\a}{2}}^2.
\end{align}
}
Then  substituting \eqref{midie2} into \eqref{midie1} and using Lemma \ref{le1.2}, it follows from \eqref{2stalp1} that
\begin{equation}\label{1stg1}
 \|(-\Delta)^{\frac{\a}{4}}G^{\frac{1}{2}}\|^2\leq C|\tilde{e}_{u}^{\frac{1}{2}}|_{\frac{\a}{2}}^2+C(e_{r}^{1})^2.
\end{equation}
Combining \eqref{er2st1}, \eqref{2stalp1} and \eqref{1stg1}, we arrive at
{\color{blue}
\begin{align}\label{2}
 |e_{v}^{1}|_{\frac{\a}{2}}^{2}+\kappa|e_{u}^{1}|_{\a}^{2}+ \|e_{u}^{1}\|^{2}+ \|e_{v}^{1}\|^{2}+\kappa |e_{u}^{1}|_{\frac{\a}{2}}^{2}+(e_{r}^{1})^{2}
  \leq C\tau^4.
\end{align}
}
By virtue of Lemma \ref{le1.3} and \eqref{2}, we have
\begin{equation}\label{timeboutheo1}
  \|U^{1}\|_{\infty}\leq  \|u^{1}\|_{\infty}+\|e_{u}^{1}\|_{\infty}\leq \|u^{1}\|_{\infty}+C|e_{u}^{1}|_{\a} \leq 1+L\leq M,
\end{equation}
where $\tau \leq C^{-\frac{1}{2}}$.

 3. The third  step : we intend to use the mathematical induction method to estimate \eqref{errtheo1} holds  for $1\leq n \leq K$.

 First, we assume that \eqref{errtheo1} holds for $n\leq k$ and we intend to prove  that it also holds for $n=k+1$. Similar to \eqref{timeboutheo1}, we obtain
 {\color{blue}
 \begin{equation}\label{3}
  \|U^{n}\|_{\infty}\leq  M, ~1\leq n \leq k.
 \end{equation}
 }
 %Next, estimate $e_{u}^{k+1},~e_{v}^{k+1},~e_{r}^{k+1}$ .
 Subtract \eqref{sinets1}-\eqref{sinets3} from \eqref{sinetay1}-\eqref{sinetay3} to obtain the following error {\color{blue} equations}.
\begin{eqnarray}
% \nonumber to remove numbering (before each equation)
 &&\delta_{t}e_{u}^{k+\frac{1}{2}}=\bar{e}_{v}^{k+\frac{1}{2}}+Q_{1}^{k+\frac{1}{2}}, \label{4}\\
%&&v_{t}+\gamma v-K_{x}\frac{\partial^{2\alpha_{1}}u}{\partial|x|^{2\alpha_{1}}}
%  -K_{y}\frac{\partial^{2\alpha_{2}}u}{\partial|x|^{2\alpha_{2}}}+f(u)=g(x,y,t) ,  \label{sinene2}\\
  &&\delta_{t}e_{v}^{k+\frac{1}{2}}+\kappa (-\Delta)^{\frac{\a}{2}}\bar{e}_{u}^{k+\frac{1}{2}}+\gamma_{1}(-\Delta)^{\frac{\a}{2}}\bar{e}_{v}^{k+\frac{1}{2}}+
  \gamma_{2}\bar{e}_{v}^{k+\frac{1}{2}}
  +G^{k+\frac{1}{2}} =Q_{2}^{k+\frac{1}{2}} ,  \label{5}\\
  &&\delta_{t}e_{r}^{k+\frac{1}{2}}=F^{k+\frac{1}{2}} +Q_{3}^{k+\frac{1}{2}}.\label{6}
\end{eqnarray}
Taking the inner product of $2\tau \bar{e}_{u}^{k+\frac{1}{2}}$ and $2\tau \bar{e}_{v}^{k+\frac{1}{2}}$ in \eqref{4}-\eqref{5} respectively,  as well as multiplying $2\tau \bar{e}_{r}^{k+\frac{1}{2}}$ on the both sides of \eqref{6}, we have
{\color{blue}
\begin{eqnarray}
% \nonumber to remove numbering (before each equation)
&& \|e_{u}^{k+1}\|^2-\|e_{u}^{k}\|^2=2\tau(\bar{e}_{v}^{k+\frac{1}{2}},\bar{e}_{u}^{k+\frac{1}{2}})
+2\tau(Q_{1}^{k+\frac{1}{2}},\bar{e}_{u}^{k+\frac{1}{2}}), \label{7}\\
%&&v_{t}+\gamma v-K_{x}\frac{\partial^{2\alpha_{1}}u}{\partial|x|^{2\alpha_{1}}}
%  -K_{y}\frac{\partial^{2\alpha_{2}}u}{\partial|x|^{2\alpha_{2}}}+f(u)=g(x,y,t) ,  \label{sinene2}\\
  &&\|e_{v}^{k+1}\|^2-\|e_{v}^{k}\|^2+\kappa (|e_{u}^{k+1}|_{\frac{\a}{2}}^{2}-|e_{u}^{k}|_{\frac{\a}{2}}^{2})
  +2\tau \gamma_{1}|\bar{e}_{v}^{k+\frac{1}{2}}|_{\frac{\a}{2}}^{2}+2\gamma_{2}\tau \|\bar{e}_{v}^{k+\frac{1}{2}}\|^2 \nonumber \\
  &&=2\tau(Q_{2}^{k+\frac{1}{2}},\bar{e}_{v}^{k+\frac{1}{2}})-2\tau (G^{k+\frac{1}{2}},\bar{e}_{v}^{k+\frac{1}{2}}) +2\tau \kappa(Q_{1}^{k+\frac{1}{2}},(-\Delta)^{\a/2}\bar{e}_{u}^{k+\frac{1}{2}}), \label{8}\\
  &&(e_{r}^{k+1})^{2}-(e_{r}^{k})^{2}=2\tau(F^{k+\frac{1}{2}},\bar{e}_{r}^{k+\frac{1}{2}}) +2\tau(Q_{3}^{k+\frac{1}{2}},\bar{e}_{r}^{k+\frac{1}{2}}),\label{9}
\end{eqnarray}
}
where
\begin{equation*}
  G^{k+\frac{1}{2}}=r^{k+\frac{1}{2}}\bigg(\frac{F'(\tilde{u}^{k+\frac{1}{2}})}{\sqrt{E(\tilde{u}^{k+\frac{1}{2}})}}
  -\frac{F'(\tilde{U}^{k+\frac{1}{2}})}{\sqrt{E(\tilde{U}^{k+\frac{1}{2}})}}\bigg)+
\bar{e}_{r}^{k+\frac{1}{2}}\frac{F'(\tilde{U}^{k+\frac{1}{2}})}{\sqrt{E(\tilde{U}^{k+\frac{1}{2}})}},
\end{equation*}
and
\begin{small}
\begin{eqnarray}
% \nonumber to remove numbering (before each equation)
  %\label{10}
&&F^{k+\frac{1}{2}}=\bigg(\frac{1}{2\sqrt{E(\tilde{u}^{k+\frac{1}{2}})}}F'(\tilde{u}^{k+\frac{1}{2}})
-\frac{1}{2\sqrt{E(\tilde{U}^{k+\frac{1}{2}})}}F'(\tilde{U}^{k+\frac{1}{2}}),\delta_{t}u^{k+\frac{1}{2}}\bigg)
+\Big(\frac{1}{2\sqrt{E(\tilde{U}^{k+\frac{1}{2}})}}F'(\tilde{U}^{k+\frac{1}{2}}),\delta_{t}e_{u}^{k+\frac{1}{2}}\Big), \nonumber %\label{11}
%&&\frac{1}{\sqrt{E(\tilde{u}^{k+\frac{1}{2}})}}F'(\tilde{u}^{k+\frac{1}{2}})
%-\frac{1}{\sqrt{E(\tilde{U}^{k+\frac{1}{2}})}}F'(\tilde{U}^{k+\frac{1}{2}})=
%\nonumber\\
%&&\frac{F'(\tilde{u}^{k+\frac{1}{2}})-F'(\tilde{U}^{k+\frac{1}{2}})}{\sqrt{E(\tilde{u}^{k+\frac{1}{2}})}}+\frac{F'(\tilde{U}^{k+\frac{1}{2}})(E(\tilde{u}^{k+\frac{1}{2}})
%-E(\tilde{U}^{k+\frac{1}{2}}))}
%{\sqrt{E(\tilde{U}^{k+\frac{1}{2}})}\sqrt{E(\tilde{u}^{k+\frac{1}{2}})}
%\bigg(\sqrt{E(\tilde{u}^{k+\frac{1}{2}})}+\sqrt{E(\tilde{U}^{k+\frac{1}{2}})}\bigg)}. \nonumber %\label{12}
\end{eqnarray}
\end{small}
where
\begin{small}
\begin{eqnarray}
&&\frac{1}{\sqrt{E(\tilde{u}^{k+\frac{1}{2}})}}F'(\tilde{u}^{k+\frac{1}{2}})
-\frac{1}{\sqrt{E(\tilde{U}^{k+\frac{1}{2}})}}F'(\tilde{U}^{k+\frac{1}{2}})= \nonumber\\
&& \frac{F'(\tilde{u}^{k+\frac{1}{2}})-F'(\tilde{U}^{k+\frac{1}{2}})}{\sqrt{E(\tilde{u}^{k+\frac{1}{2}})}}+\frac{F'(\tilde{U}^{k+\frac{1}{2}})(E(\tilde{u}^{k+\frac{1}{2}})
-E(\tilde{U}^{k+\frac{1}{2}}))}
{\sqrt{E(\tilde{U}^{k+\frac{1}{2}})}\sqrt{E(\tilde{u}^{k+\frac{1}{2}})}
\bigg(\sqrt{E(\tilde{u}^{k+\frac{1}{2}})}+\sqrt{E(\tilde{U}^{k+\frac{1}{2}})}\bigg)}, \label{tay1} \\%\label{1
&&u^{k+1}-u^{k}=\tau u_{t}({t_{k+\frac{1}{2}}})-\frac{1}{2}\bigg(\int_{t_{k}}^{t_{k+\frac{1}{2}}}(t_{k}-s)^2\frac{\partial^{3} u}{\partial t^{3}}(s)ds+\int_{t_{k+1}}^{t_{k+\frac{1}{2}}}(t_{k+1}-s)^2\frac{\partial^{3} u}{\partial t^{3}}(s)ds\bigg).\label{tay2}
\end{eqnarray}
\end{small}
Similar to the proof for \eqref{1strr1} and \eqref{er1}, we have the following {\color{blue}estimates}
\begin{align}
&|(G^{k+\frac{1}{2}},\bar{e}_{v}^{k+\frac{1}{2}})|\leq \bigg(|\bar{r}^{k+\frac{1}{2}}|_{\infty}|\tilde{e}_{u}^{k+\frac{1}{2}}|+
|\bar{e}_{r}^{k+\frac{1}{2}}|\|\frac{F'(\tilde{U}^{k+\frac{1}{2}})}{\sqrt{E(\tilde{U}^{k+\frac{1}{2}})}}
\|_{\infty}\bigg)|\bar{e}_{v}^{k+\frac{1}{2}}| \nonumber \\
 &~~~~~~~~~~~~~~~\leq C\bigg(\|e_{u}^{k}\|^2+\|e_{u}^{k-1}\|^2+\|e_{r}^{k+1}\|^2+\|e_{v}^{k+1}\|^2+\|e_{r}^{k}\|^2+\|e_{v}^{k}\|^2\bigg), \label{13}\\
&\tau|(F^{k+\frac{1}{2}},\bar{e}_{r}^{k+\frac{1}{2}})|\leq C \tau |u_{t}({t_{k+\frac{1}{2}}})|_{\infty}\bigg((e_{r}^{k+1})^2+(e_{r}^{k})^2+\|e_{u}^{k}\|^2+\|e_{u}^{k-1}\|^2\bigg)+ \nonumber \\
&~~~~~~~~~~~~~~~C\tau^{4}\int_{t_{k}}^{t_{k+1}}\|\frac{\partial^{3}u}{\partial t^{3}}(s)\|^2ds +C \tau\bigg( \|e_{v}^{k+1}\|^2+ \|e_{v}^{k}\|^2+ \|Q_{1}^{k+\frac{1}{2}}\|^2\bigg).\label{14}
\end{align}
Combining  \eqref{13}-\eqref{14}, using Cauchy-Schwarz inequality and Young's inequality, we get
{\color{blue}
\begin{align}\label{15}
 &\|e_{u}^{k+1}\|^2-\|e_{u}^{k}\|^2+\|e_{v}^{k+1}\|^2-\|e_{v}^{k}\|^2+\kappa (|e_{u}^{k+1}|_{\frac{\a}{2}}^{2}-|e_{u}^{k}|_{\frac{\a}{2}}^{2})+(e_{r}^{k+1})^2-(e_{r}^{k})^2 \nonumber \\
&~~~~~~~~~~~~ \leq C\tau\bigg(\|e_{v}^{k+1}\|^2+ \|e_{v}^{k}\|^2+\|e_{u}^{k}\|^2+\|e_{u}^{k-1}\|^2+(e_{r}^{k+1})^2+(e_{r}^{k})^2+|e_{u}^{k+1}|_{\frac{\a}{2}}^{2}+|e_{u}^{k}|_{\frac{\a}{2}}^{2}\bigg)+\nonumber \\
 &~~~~~~~~~~~C\tau\bigg(\|Q_{1}^{k+\frac{1}{2}}\|^2+|Q_{1}^{k+\frac{1}{2}}|_{\frac{\a}{2}}^2+\|Q_{2}^{k+\frac{1}{2}}\|^2+\|Q_{3}^{k+\frac{1}{2}}\|^2\bigg)
 +C\tau^{4}\int_{t_{k}}^{t_{k+1}}\|\frac{\partial^{3}u}{\partial t^{3}}(s)\|^2ds.
\end{align}
}
%Estimate $|e_{u}^{k+1}|_{\a}$.
 Applying $(-\Delta)^{\frac{\a}{2}}$ on the both sides of \eqref{4}, and then taking the inner product of the resulting equation with $\tau(-\Delta)^{\frac{\a}{2}}\bar{e}_{u}^{k+\frac{1}{2}}$, one arrives at
 \begin{equation}\label{16}
   |e_{u}^{k+1}|_{\a}^{2}-|e_{u}^{k}|_{\a}^{2}=\tau((-\Delta)^{\frac{\a}{2}}\bar{e}_{v}^{k+\frac{1}{2}},(-\Delta)^{\frac{\a}{2}}\bar{e}_{u}^{k+\frac{1}{2}})
   +\tau((-\Delta)^{\frac{\a}{2}}Q_{1}^{k+\frac{1}{2}},(-\Delta)^{\frac{\a}{2}}\bar{e}_{u}^{k+\frac{1}{2}}),
 \end{equation}
and  taking the inner product of \eqref{5} with $2\tau (-\Delta)^{\frac{\a}{2}}\bar{e}_{v}^{k+\frac{1}{2}}$, we get
\begin{align}\label{17}
&|e_{v}^{k+1}|_{\frac{\a}{2}}^{2}-|e_{v}^{k}|_{\frac{\a}{2}}^{2}+\kappa|e_{u}^{k+1}|_{\a}^{2} -\kappa|e_{u}^{k}|_{\a}^{2}
+\tau\gamma_{1}|\bar{e}_{v}^{k+\frac{1}{2}}|_{\a}^{2}+
\gamma_{2}\tau |\bar{e}_{v}^{k+\frac{1}{2}}|_{\frac{\a}{2}}^{2} \nonumber \\
&=-\tau(G^{k+\frac{1}{2}},(-\Delta)^{\frac{\a}{2}}\bar{e}_{v}^{k+\frac{1}{2}})
+\tau(Q_{2}^{k+\frac{1}{2}},(-\Delta)^{\frac{\a}{2}}\bar{e}_{v}^{k+\frac{1}{2}})+
\kappa\tau\Big((-\Delta)^{\frac{\a}{2}}Q_{1}^{k+\frac{1}{2}},(-\Delta)^{\frac{\a}{2}}\bar{e}_{u}^{k+\frac{1}{2}}\Big).
\end{align}
Using Lemmas \ref{le1.1} and \ref{le1.2}, one  obtains
\begin{equation}\label{18}
 \|(-\Delta)^{\frac{\a}{4}} G^{k+\frac{1}{2}}\|^2\leq C\tau\big(|e_{u}^{k}|_{\frac{\a}{2}}^{2}+|e_{u}^{k-1}|_{\frac{\a}{2}}^{2}+(e_{r}^{k+1})^2+(e_{r}^{k})^2\big),
\end{equation}
then from \eqref{17}-\eqref{18}, we get
{\color{blue}
\begin{align}
|e_{v}^{k+1}|_{\frac{\a}{2}}^{2}-|e_{v}^{k}|_{\frac{\a}{2}}^{2}+\kappa|e_{u}^{k+1}|_{\a}^{2} -\kappa|e_{u}^{k}|_{\a}^{2}
\leq C\tau \big(|e_{v}^{k+1}|_{\a}^{2}+|e_{v}^{k}|_{\a}^{2}+|e_{u}^{k+1}|_{\a}^{2}+|e_{u}^{k}|_{\a}^{2}+\nonumber\\
|e_{u}^{k}|_{\a/2}^{2}+|e_{u}^{k-1}|_{\a/2}^{2}+(e_{r}^{k+1})^2+(e_{r}^{k})^2\big)
+\|(-\Delta)^{\frac{\a}{2}}Q_{1}^{k+\frac{1}{2}}\|^{2}+\|(-\Delta)^{\frac{\a}{4}}Q_{2}^{k+\frac{1}{2}}\|^{2}.\label{19}
\end{align}
}
Combining \eqref{15} and \eqref{19}, summing from $n=1$ to $n=k$, and then using Gronwall inequality, we get
\begin{align}\label{20}
 \|e_{u}^{k+1}\|^2+\|e_{v}^{k+1}\|^2+\kappa |e_{v}^{k+1}|_{\frac{\a}{2}}^{2}+\kappa |e_{u}^{k+1}|_{\frac{\a}{2}}^{2}+\kappa |e_{u}^{k+1}|_{\a}^{2}+(e_{r}^{k+1})^2\leq C\tau^4,
\end{align}
which implies $|e_{u}^{k+1}|_{\a} \leq C\tau^2$. Moreover, we arrive at
\begin{equation}\label{21}
  \|U^{k+1}\|_{\infty}\leq  \|u^{k+1}\|_{\infty}+ \|e_{u}^{k+1}\|_{\infty}\leq L+|e_{u}^{k+1}|_{\a}\leq M.
\end{equation}
\eqref{20} shows that  \eqref{errtheo1} holds  for $n=k+1$. We have completed the mathematical induction and thus the proof is completed.
\end{proof}

\section{Spatial error analysis}
In this section, we give the $\tau$-independent convergence results for the discrete full scheme. It implies that we do not need to assume the global Lipshitz condition. Similar to the technique in \cite{MR4045247}, from Lemma \ref{le1.3} and Lemma \ref{le1.5},
we have $\|P_{N}w\|_{\infty}\leq C\|w\|_{\a}$. Then we have the boundedness of $P_{N}U^{n}$ in $L_{\infty}$ norm, and %for any $v\in H^{\a}(\Omega)$.
denote $M_{2}=\max_{0\leq n \leq K}\{P_{N}U^{n}\}+1$.
For convenience, we denote
\begin{eqnarray}
% \nonumber to remove numbering (before each equation)
 &&\bar{ e}_{N,u}^{n}=U^{n}-u_{N}^{n}=U^{n}-P_{N}U^{n}+P_{N}U^{n}-u_{N}^{n}=\rho_{u}^{n}+\theta_{u}^{n}, \nonumber\\
 &&\bar{ e}_{N,v}^{n}=V^{n}-v_{N}^{n}=V^{n}-P_{N}V^{n}+P_{N}V^{n}-v_{N}^{n}=\rho_{v}^{n}+\theta_{v}^{n}.
\end{eqnarray}
%We can direct get the weak formulations of the time-discrete system \eqref{sinets1}-\eqref{sinets3}, and \eqref{22}-\eqref{27} subtract from the resulting equations to yield
Combining the time-discrete system \eqref{sinets1}-\eqref{sinets3} and \eqref{22}-\eqref{27}, we directly get the following  error equations
\begin{align}
&(\delta_{t}\theta_{u}^{n+\frac{1}{2}},\psi)=(\bar{\theta}_{v}^{n+\frac{1}{2}},\psi), \label{28}\\
%&&v_{t}+\gamma v-K_{x}\frac{\partial^{2\alpha_{1}}u}{\partial|x|^{2\alpha_{1}}}
%  -K_{y}\frac{\partial^{2\alpha_{2}}u}{\partial|x|^{2\alpha_{2}}}+f(u)=g(x,y,t) ,  \label{sinene2}\\
  &(\delta_{t}\theta_{v}^{n+\frac{1}{2}},\varphi)+\kappa ((-\Delta)^{\frac{\a}{2}}\bar{\theta}_{u}^{n+\frac{1}{2}},\varphi)+\gamma_{1}((-\Delta)^{\frac{\a}{2}}\bar{\theta}_{v}^{n+\frac{1}{2}},\varphi)+
  \gamma_{2}(\bar{\theta}_{v}^{n+\frac{1}{2}},\varphi)
  +(R^{n+\frac{1}{2}}_{1},\varphi) =0 ,  \label{29}
%  &&(\delta_{t}R^{n+\frac{1}{2}},\vartheta)=(\frac{1}{2\sqrt{E(\tilde{u}_{N}^{n+\frac{1}{2}})}}(F'(\tilde{u}_{N}^{n+\frac{1}{2}}),\delta_{t}u_{N}^{n+\frac{1}{2}}),\vartheta). \label{30}
\end{align}
where $
R_{1}^{n+\frac{1}{2}}= \bar{R}^{n+\frac{1}{2}}\bigg(\frac{F'(\tilde{u}_{N}^{n+\frac{1}{2}})}{\sqrt{E(\tilde{u}_{N}^{n+\frac{1}{2}})}}- \frac{F'(\tilde{U}^{n+\frac{1}{2}})}{\sqrt{E(\tilde{U}^{n+\frac{1}{2}})}}\bigg) $.

{\color{blue}Following from \eqref{244}}, we have $\theta_{u}^{0}=0$ and $\theta_{v}^{0}=0$. Moreover, for the first step, we have
\begin{eqnarray}
% \nonumber to remove numbering (before each equation)
 &&(\frac{\tilde{\theta}_{u}^{\frac{1}{2}}}{\tau/2},\psi)=(\tilde{\theta}_{v}^{\frac{1}{2}},\psi), \label{31}\\
%&&v_{t}+\gamma v-K_{x}\frac{\partial^{2\alpha_{1}}u}{\partial|x|^{2\alpha_{1}}}
%  -K_{y}\frac{\partial^{2\alpha_{2}}u}{\partial|x|^{2\alpha_{2}}}+f(u)=g(x,y,t) ,  \label{sinene2}\\
  &&(\frac{\tilde{\theta}_{v}^{\frac{1}{2}}}{\tau/2},\varphi)+\kappa ((-\Delta)^{\frac{\a}{2}}\tilde{\theta}_{u}^{\frac{1}{2}},\varphi)+\gamma_{1}((-\Delta)^{\frac{\a}{2}}\tilde{\theta}_{v}^{\frac{1}{2}},\varphi)
  +\gamma_{2}(\tilde{\theta}_{v}^{\frac{1}{2}},\varphi)
  +(\tilde{R}_{1}^{\frac{1}{2}},\varphi) =0 ,  \label{32}
\end{eqnarray}
where $\tilde{R}_{1}^{\frac{1}{2}}=\tilde{R}^{\frac{1}{2}}\bigg(\frac{F'(u_{N}^{0})}{\sqrt{E(u_{N}^{0})}}-\frac{F'(U^{0})}{\sqrt{E(U^{0})}}\bigg)$.
\begin{Theo} \label{theo32}
Suppose that  \eqref{sinene1}-\eqref{sinene4} have unique solutions satisfying \eqref{bound1} and $u_{N}^{n},~v_{N}^{n}$ are the solutions of \eqref{22}-\eqref{24}, $0\leq n \leq K$. Then there exist two positive constants $\tau^{*}$ and $N^{*}$, such that when $\tau \leq \tau^{*}$ and $N\geq N^{*}$, we have
{\color{blue}
\begin{eqnarray}
% \nonumber to remove numbering (before each equation)
&& \|\theta_{u}^{n}\|^2+\|\theta_{v}^{n}\|^2+\kappa |\theta_{u}^{n}|_{\frac{\a}{2}}^2 \leq CN^{-2\a}, 0\leq n \leq K,  \label{33}\\
&&  \|u_{N}^{n}\|_{\infty} \leq M_{2}. \label{34}
\end{eqnarray}
}
\end{Theo}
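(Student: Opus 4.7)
The plan is to prove \eqref{33} and \eqref{34} simultaneously by induction on $n$, in the spirit of the temporal--spatial error-splitting argument. The key advantage over a direct analysis is that Theorem \ref{theo31} already provides the uniform bound $\|U^n\|_\infty \leq M$, so the nonlinear contribution in $R_1^{n+1/2}$ only needs to be handled via \emph{local} Lipschitz continuity of $F'$ on a fixed bounded set rather than a global condition.

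For the base case, $u_N^0 = P_N u^0$ and $U^0 = u^0$ give $\theta_u^0 = \theta_v^0 = 0$ trivially, while $\|u_N^0\|_\infty \leq M_2$ by the definition of $M_2$. For the inductive step, assume \eqref{33}--\eqref{34} hold at all indices $\leq n$. I would test \eqref{28} with $2\tau\bar\theta_u^{n+1/2}$ and with $2\tau(-\Delta)^{\alpha/2}\bar\theta_u^{n+1/2}$, and test \eqref{29} with $2\tau\bar\theta_v^{n+1/2}$. Using Lemma \ref{le1.1} and the telescoping identities analogous to \eqref{normrela}, the damping terms yield nonnegative contributions that can be dropped, while the coupling between the two equations produces $|\theta_u^{n+1}|_{\alpha/2}^2 - |\theta_u^n|_{\alpha/2}^2$, leading to a discrete energy identity of the form
\begin{align*}
 &\|\theta_v^{n+1}\|^2 - \|\theta_v^n\|^2 + \kappa\bigl(|\theta_u^{n+1}|_{\alpha/2}^2 - |\theta_u^n|_{\alpha/2}^2\bigr) \\
 &\qquad \leq C\tau\bigl(\|\bar\theta_v^{n+1/2}\|^2 + |\bar\theta_u^{n+1/2}|_{\alpha/2}^2\bigr) + 2\tau\bigl|\bigl(R_1^{n+1/2},\bar\theta_v^{n+1/2}\bigr)\bigr|.
\end{align*}
The first half step \eqref{31}--\eqref{32} is treated in exactly the same way by testing with $\tilde\theta_u^{1/2}$, $\tilde\theta_v^{1/2}$ and $(-\Delta)^{\alpha/2}\tilde\theta_u^{1/2}$.

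The main obstacle is bounding $R_1^{n+1/2}$ without a global Lipschitz assumption. By Theorem \ref{theo31} we have $\|\tilde U^{n+1/2}\|_\infty \leq M$, and by the inductive hypothesis \eqref{34} also $\|\tilde u_N^{n+1/2}\|_\infty \leq M_2$. Consequently $F'$ and $1/\sqrt{E(\cdot)}$ are Lipschitz on the fixed bounded set $\{|z| \leq M + M_2\}$, yielding
\[
 \|R_1^{n+1/2}\| \leq C|\bar R^{n+1/2}|\,\|\tilde U^{n+1/2} - \tilde u_N^{n+1/2}\| \leq C\bigl(\|\tilde\theta_u^{n+1/2}\| + \|\tilde\rho_u^{n+1/2}\|\bigr),
\]
and Lemma \ref{le1.5} bounds the projection term by $CN^{-m}$. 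Summing the energy identities for $j = 0,\ldots,n$, applying Cauchy--Schwarz and Young's inequality to absorb the $\bar\theta$ terms, and invoking the discrete Gronwall lemma yields \eqref{33} at $n+1$.

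To close the induction on \eqref{34}, decompose $u_N^{n+1} = P_N U^{n+1} - \theta_u^{n+1}$. By the definition of $M_2$, $\|P_N U^{n+1}\|_\infty \leq M_2 - 1$, while the inverse inequality (Lemma \ref{le1.4}) combined with the freshly established \eqref{33} gives
\[
 \|\theta_u^{n+1}\|_\infty \leq C_2 N\,\|\theta_u^{n+1}\| \leq C N \cdot N^{-\alpha} = C N^{1-\alpha}.
\]
Since $\alpha > 1$, there exists $N^*$ such that this quantity is $\leq 1$ whenever $N \geq N^*$, which produces $\|u_N^{n+1}\|_\infty \leq M_2$ and completes the induction. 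This smallness of $N^{1-\alpha}$ is precisely what replaces the restrictive CFL-type condition $\tau^2 N \leq c$ appearing in previous works.
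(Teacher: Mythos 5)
Your proposal follows essentially the same route as the paper: a step-by-step induction (starting half-step, first full step, then the general step) in which \eqref{28}--\eqref{29} are tested with $\bar\theta_u^{n+1/2}$, $(-\Delta)^{\alpha/2}\bar\theta_u^{n+1/2}$ and $\bar\theta_v^{n+1/2}$, the nonlinear residual $R_1^{n+1/2}$ is controlled by local Lipschitz continuity using the $L^\infty$ bound on $U^n$ from Theorem \ref{theo31} together with the inductive bound $\|u_N^n\|_\infty\leq M_2$, Gronwall closes \eqref{33}, and the inverse inequality with $\alpha>1$ closes \eqref{34}; this is exactly the paper's argument. One small correction: the projection term appearing in $R_1^{n+1/2}$ is $\rho_u=U^n-P_NU^n$, i.e.\ a projection of the \emph{time-discrete} solution, for which only uniform $H^\alpha$ regularity is available from Theorem \ref{theo31}; hence Lemma \ref{le1.5} gives $\|\rho_u^n\|\leq CN^{-\alpha}\|U^n\|_\alpha$ rather than the $CN^{-m}$ you claim (which would require an unproven uniform $H^m$ bound on $U^n$). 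This is precisely why the bound in \eqref{33} is $CN^{-2\alpha}$ and not $CN^{-2m}$; with that rate substituted, your argument goes through unchanged.
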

\begin{proof}
  1. The first step: estimate $\tilde{\theta}_{u}^{\frac{1}{2}}$ and $\tilde{\theta}_{v}^{\frac{1}{2}}$.

   Letting $\psi=\tilde{\theta}_{u}^{\frac{1}{2}}$ and $\varphi=\tilde{\theta}_{v}^{\frac{1}{2}}$ in \eqref{31}-\eqref{32}, we can get
\begin{align}
&\|\tilde{\theta}_{u}^{\frac{1}{2}}\|^2=\frac{\tau}{2}(\tilde{\theta}_{u}^{\frac{1}{2}},\tilde{\theta}_{v}^{\frac{1}{2}}),  \label{35} \\
& \|\tilde{\theta}_{v}^{\frac{1}{2}}\|^2+\frac{\tau}{2}\kappa ((-\Delta)^{\frac{\a}{2}}\tilde{\theta}_{u}^{\frac{1}{2}},\tilde{\theta}_{v}^{\frac{1}{2}})
+\frac{\tau}{2}\gamma_{1}|\tilde{\theta}_{v}^{\frac{1}{2}}|_{\frac{\a}{2}}^2
  +\frac{\tau}{2}\gamma_{2}\|\tilde{\theta}_{v}^{\frac{1}{2}}\|^2
 =-\frac{\tau}{2}(\tilde{R}_{1}^{\frac{1}{2}},\tilde{\theta}_{v}^{\frac{1}{2}}).\label{36}
\end{align}
Setting $\psi=(-\Delta)^{\frac{\a}{2}}\tilde{\theta}_{u}^{\frac{1}{2}}$ in  \eqref{35}, the \eqref{36} can be rewritten as
\begin{equation}
   \|\tilde{\theta}_{v}^{\frac{1}{2}}\|^2+\kappa |\tilde{\theta}_{u}^{\frac{1}{2}}|_{\frac{\a}{2}}^2
+\frac{\tau}{2}\gamma_{1}|\tilde{\theta}_{v}^{\frac{1}{2}}|_{\frac{\a}{2}}^2
  +\frac{\tau}{2}\gamma_{2}\|\tilde{\theta}_{v}^{\frac{1}{2}}\|^2
 =-\frac{\tau}{2}(\tilde{R}_{1}^{\frac{1}{2}},\tilde{\theta}_{v}^{\frac{1}{2}}), \label{37}
\end{equation}
meanwhile $\tilde{R}_{1}^{\frac{1}{2}}$ also could be  represented by
  \begin{align}
\tilde{R}_{1}^{\frac{1}{2}}=\tilde{R}^{\frac{1}{2}}\bigg(
\frac{F'(u_{N}^{0})-F'(U^{0})}{\sqrt{E(u_{N}^{0})}}+
\frac{F'(U^{0})(E(u_{N}^{0})
-E(U^{0}))}
{\sqrt{E(u_{N}^{0})}\sqrt{E(U^{0})}
\bigg(\sqrt{E(u_{N}^{0})}+\sqrt{E(U^{0})}\bigg)}\bigg). \label{38}
  \end{align}
  From Theorem \ref{theo31} and   %$(e_{r}^{n})^2\leq C\tau^4~(n=0,\ldots,K)$,~
  $(\tilde{e}_{r}^{\frac{1}{2}})^2\leq C\tau^{3}$, thus  $|\tilde{R}^{\frac{1}{2}}|$ is bounded. We denote
  \begin{eqnarray}
  % \nonumber to remove numbering (before each equation)
  \hat{K}= \max\{|\tilde{r}^{\frac{1}{2}}|, |\tilde{R}^{\frac{1}{2}}| \}+1, \label{39}
  % \max\{\bar{R}^{n+\frac{1}{2}}(0\leq n \leq K),  \tilde{R}^{\frac{1}{2}} \}= \max\{\tilde{r}^{\frac{1}{2}}, \bar{r}^{n+\frac{1}{2}}(0\leq n \leq K)\}+1=\hat{K}. \label{39}
  \end{eqnarray}
Combining \eqref{35}-\eqref{39}, we can get
\begin{equation}\label{40}
\|\tilde{\theta}_{u}^{\frac{1}{2}}\|^2+  \|\tilde{\theta}_{v}^{\frac{1}{2}}\|^2+\kappa |\tilde{\theta}_{u}^{\frac{1}{2}}|_{\frac{\a}{2}}^2\leq C\tau^2\|\tilde{\theta}_{v}^{\frac{1}{2}}\|^2+C\|\rho_{u}^{0}\|^{2}+C\tau\|\tilde{\theta}_{u}^{\frac{1}{2}}\|^2+C\tau\|\tilde{\theta}_{v}^{\frac{1}{2}}\|^2.
\end{equation}
Thus, by Lemma \ref{le1.5}, we can have
\begin{equation}\label{41}
  \|\tilde{\theta}_{u}^{\frac{1}{2}}\|^2+  \|\tilde{\theta}_{v}^{\frac{1}{2}}\|^2+\kappa |\tilde{\theta}_{u}^{\frac{1}{2}}|_{\frac{\a}{2}}^2\leq N^{-2\a}.
\end{equation}
%In addtion, \eqref{28} implies $\delta_{t}\theta_{u}^{n+\frac{1}{2}}=\bar{\theta}_{v}^{n+\frac{1}{2}}$.
%Applying $(-\Delta)^{\frac{\a}{2}}$ on the both side of \eqref{28}, letting $\psi= (-\Delta)^{\frac{\a}{2}}\tilde{\theta}_{u}^{\frac{1}{2}}$ and $\varphi=(-\Delta)^{\frac{\a}{2}}\tilde{\theta}_{v}^{\frac{1}{2}}$, one arrives at
%\begin{eqnarray}
%% \nonumber to remove numbering (before each equation)
%  |\tilde{\theta}_{v}^{\frac{1}{2}}|_{\frac{\a}{2}}^2+\kappa |\tilde{\theta}_{u}^{\frac{1}{2}}|_{\a}^2
%+\frac{\tau}{2}\gamma_{1}|\tilde{\theta}_{v}^{\frac{1}{2}}|_{\a}^2
%  +\frac{\tau}{2}\gamma_{2}|\tilde{\theta}_{v}^{\frac{1}{2}}|_{\frac{\a}{2}}^2
% \leq |\frac{\tau}{2}(\tilde{R}_{1}^{\frac{1}{2}},(-\Delta)^{\frac{\a}{2}}\tilde{\theta}_{v}^{\frac{1}{2}})| \nonumber \\
% \leq C \tau^2  |\tilde{\theta}_{v}^{\frac{1}{2}}|_{\frac{\a}{2}}^2 +C\|(-\Delta)^{\frac{\a}{4}}\tilde{R}_{1}^{\frac{1}{2}}\|^2
% \leq C \tau^2  |\tilde{\theta}_{v}^{\frac{1}{2}}|_{\frac{\a}{2}}^2 +C|\rho_{u}^{0}|_{\frac{\a}{2}}^{2}
%\end{eqnarray}
From Lemma \ref{le1.4}, we have
\begin{equation}\label{42}
  \|\tilde{u}_{N}^{\frac{1}{2}}\|_{\infty}\leq \|P_{N}\tilde{U}^{\frac{1}{2}}\|_{\infty}+\|\tilde{\theta}_{u}^{\frac{1}{2}}\|_{\infty}\leq \|P_{N}\tilde{U}^{\frac{1}{2}}\|_{\infty}+CN^{1-\a}\leq M_{2},
\end{equation}
where $\tau \leq \tau^{*}$ and $N \geq C^{\frac{1}{\a-1}}$.

2. The second step: estimate $\theta_{u}^{1}$ and $\theta_{v}^{1}$. %\eqref{33} holds for $n=1$.

Letting $\psi=\theta_{u}^{1}$ and $\varphi=\theta_{v}^{1}$ in \eqref{28}-\eqref{29}, then it leads to
\begin{align}
&\|\theta_{u}^{1}\|^2= \frac{\tau}{2}(\theta_{v}^{1},\theta_{u}^{1}), \label{43}\\
%&&v_{t}+\gamma v-K_{x}\frac{\partial^{2\alpha_{1}}u}{\partial|x|^{2\alpha_{1}}}
%  -K_{y}\frac{\partial^{2\alpha_{2}}u}{\partial|x|^{2\alpha_{2}}}+f(u)=g(x,y,t) ,  \label{sinene2}\\
  &\|\theta_{v}^{1}\|^2+\kappa |\theta_{u}^{1}|_{\frac{\a}{2}}^2+\frac{\tau}{2}\gamma_{1}|\theta_{v}^{1}|_{\frac{\a}{2}}^2+\gamma_{2}\frac{\tau}{2}\|\theta_{v}^{1}\|^2
  =\frac{\tau}{2}(R^{\frac{1}{2}}_{1},\theta_{v}^{1}) .  \label{44}
% & R_{1}^{\frac{1}{2}}=\frac{F'(\tilde{u}_{N}^{\frac{1}{2}})-F'(\tilde{U}^{\frac{1}{2}})}{\sqrt{E(\tilde{u}_{N}^{\frac{1}{2}})}}+
%\frac{F'(\tilde{U}^{\frac{1}{2}})(E(\tilde{u}_{N}^{\frac{1}{2}})
%-E(\tilde{U}^{\frac{1}{2}}))}
%{\sqrt{E(\tilde{U}^{\frac{1}{2}})}\sqrt{E(\tilde{u}_{N}^{\frac{1}{2}})}
%\Big(\sqrt{E(\tilde{u}_{N}^{\frac{1}{2}})}+\sqrt{E(\tilde{U}^{\frac{1}{2}})}\Big)}.\label{45}
%
%&&(\delta_{t}R^{n+\frac{1}{2}},\vartheta)=(\frac{1}{2\sqrt{E(\tilde{u}_{N}^{n+\frac{1}{2}})}}(F'(\tilde{u}_{N}^{n+\frac{1}{2}}),\delta_{t}u_{N}^{n+\frac{1}{2}}),\vartheta). \label{30}
\end{align}
Using
\begin{equation*}
   R_{1}^{\frac{1}{2}}=\bar{R}^{\frac{1}{2}}\bigg(\frac{F'(\tilde{u}_{N}^{\frac{1}{2}})-F'(\tilde{U}^{\frac{1}{2}})}{\sqrt{E(\tilde{u}_{N}^{\frac{1}{2}})}}+
\frac{F'(\tilde{U}^{\frac{1}{2}})(E(\tilde{u}_{N}^{\frac{1}{2}})
-E(\tilde{U}^{\frac{1}{2}}))}
{\sqrt{E(\tilde{U}^{\frac{1}{2}})}\sqrt{E(\tilde{u}_{N}^{\frac{1}{2}})}
\bigg(\sqrt{E(\tilde{u}_{N}^{\frac{1}{2}})}+\sqrt{E(\tilde{U}^{\frac{1}{2}})}\bigg)}\bigg),\label{45}
\end{equation*}
and from Theorem \ref{theo31}, we  %$(e_{r}^{n})^2\leq C\tau^4~(n=0,\ldots,K)$,~
get the boundedness of $|\bar{R}^{\frac{1}{2}}|$. We denote
  %\begin{eqnarray}
  % \nonumber to remove numbering (before each equation)
  $\hat{K}_{1}= %\max\{\tilde{r}^{\frac{1}{2}}, \tilde{R}^{\frac{1}{2}} \}+1, \label{39}
  \max\{|\bar{R}^{\frac{1}{2}}|,  |\bar{r}^{\frac{1}{2}}| \}+1$, %= \max\{\tilde{r}^{\frac{1}{2}}, \bar{r}^{n+\frac{1}{2}}(0\leq n \leq K)\}+1=\hat{K}. \label{39}
 % \end{eqnarray}
%\begin{equation}\label{45}
%R_{1}^{\frac{1}{2}}=\frac{F'(\tilde{u}_{N}^{\frac{1}{2}})-F'(\tilde{U}^{\frac{1}{2}})}{\sqrt{E(\tilde{u}_{N}^{\frac{1}{2}})}}+
%\frac{F'(\tilde{U}^{\frac{1}{2}})(E(\tilde{u}_{N}^{\frac{1}{2}})
%-E(\tilde{U}^{\frac{1}{2}}))}
%{\sqrt{E(\tilde{U}^{\frac{1}{2}})}\sqrt{E(\tilde{u}_{N}^{\frac{1}{2}})}
%(\sqrt{E(\tilde{u}_{N}^{\frac{1}{2}})}+\sqrt{E(\tilde{U}^{\frac{1}{2}})})}
%\end{equation}
  and by virtue of Lemma \ref{le1.5}, we have
\begin{equation}\label{46}
  \|\theta_{u}^{1}\|^2+\|\theta_{v}^{1}\|^2+\kappa |\theta_{u}^{1}|_{\frac{\a}{2}}^2\leq CN^{-2\a}.
\end{equation}
Moreover, when $\tau \leq \tau^{*}$ and $N \geq C^{\frac{1}{\a-1}}$,
\begin{equation}\label{47}
  \|u_{N}^{1}\|_{\infty}\leq \|P_{N}U^{1}\|_{\infty}+\|\theta_{u}^{1}\|_{\infty}\leq \|P_{N}U^{1}\|_{\infty}+CN^{1-\a}\leq M_{2}.
\end{equation}

3. The third step. Suppose that \eqref{33} holds for $0 \leq n \leq k$ and discuss that the case for $n=k+1$ holds.

%Similar to the proof for Theorem \ref{theo31},
 We use the mathematical induction method and assume
\begin{equation}\label{47}
  \|u_{N}^{n}\|_{\infty}\leq M_{2},~0 \leq n  \leq k,
\end{equation}
and denote
$\hat{K}_{2}= \max\{|\tilde{r}^{n+\frac{1}{2}}|, |\tilde{R}^{n+\frac{1}{2}}| (n=0,...,k) \}+1$.
Next, we intend to prove that \eqref{33} holds for $n=k+1$.
Letting $\psi=\tau \bar{\theta}_{u}^{k+\frac{1}{2}}$ and $\varphi=\tau \bar{\theta}_{v}^{k+\frac{1}{2}}$ in \eqref{28}-\eqref{29}, respectively, we  get
\begin{eqnarray}
% \nonumber to remove numbering (before each equation)
&& \|\theta_{u}^{k+1}\|^2-\|\theta_{u}^{k}\|^2=\tau(\bar{\theta}_{v}^{k+\frac{1}{2}},\bar{\theta}_{u}^{k+\frac{1}{2}})
, \label{48}\\
%&&v_{t}+\gamma v-K_{x}\frac{\partial^{2\alpha_{1}}u}{\partial|x|^{2\alpha_{1}}}
%  -K_{y}\frac{\partial^{2\alpha_{2}}u}{\partial|x|^{2\alpha_{2}}}+f(u)=g(x,y,t) ,  \label{sinene2}\\
  &&\|\theta_{v}^{k+1}\|^2-\|\theta_{v}^{k}\|^2+\kappa (|\theta_{u}^{k+1}|_{\frac{\a}{2}}^{2}-|\theta_{u}^{k}|_{\frac{\a}{2}}^{2})
  +\tau \gamma_{1}|\bar{\theta}_{v}^{k+\frac{1}{2}}|_{\frac{\a}{2}}^{2}
  +\gamma_{2}\tau \|\bar{\theta}_{v}^{k+\frac{1}{2}}\|^2
   =-\tau(R_{1}^{k+\frac{1}{2}},\bar{\theta}_{v}^{k+\frac{1}{2}}). \label{49}
\end{eqnarray}
%Similar to the estimate of \eqref{45},
By using Cauchy-Schwarz inequality and Young's inequality, we have
\begin{align}
 &|(R_{1}^{k+\frac{1}{2}},\bar{\theta}_{v}^{k+\frac{1}{2}})|\leq \|\theta_{u}^{k}\|^2+  \|\theta_{u}^{k-1}\|^2+\|\theta_{v}^{k}\|^2+\|\theta_{v}^{k+1}\|^2+\|\rho_{u}^{k}\|^2+\|\rho_{u}^{k-1}\|^2, \label{50} \\
& |(\bar{\theta}_{v}^{k+\frac{1}{2}},\bar{\theta}_{u}^{k+\frac{1}{2}})| \leq \frac{1}{4}\bigg(\|\theta_{u}^{k}\|^2+  \|\theta_{u}^{k+1}\|^2+\|\theta_{v}^{k}\|^2+\|\theta_{v}^{k+1}\|^2\bigg). \label{51}
\end{align}
Therefore, we get
\begin{eqnarray}
% \nonumber to remove numbering (before each equation)
&&\|\theta_{u}^{k+1}\|^2-\|\theta_{u}^{k}\|^2+\|\theta_{v}^{k+1}\|^2-\|\theta_{v}^{k}\|^2+\kappa (|\theta_{u}^{k+1}|_{\frac{\a}{2}}^{2}-|\theta_{u}^{k}|_{\frac{\a}{2}}^{2})\leq \nonumber\\
&&C\tau\bigg(\|\theta_{u}^{k}\|^2+  \|\theta_{u}^{k+1}\|^2+\|\theta_{v}^{k}\|^2+\|\theta_{v}^{k+1}\|^2+\|\rho_{u}^{k}\|^2+\|\rho_{u}^{k-1}\|^2+ \|\theta_{u}^{k-1}\|^2\bigg). \label{52}
\end{eqnarray}
Summing from  $n=1$ to $n=k$ and using the discrete Gronwall's inequality and Lemma \ref{le1.5}, \eqref{52} leads to
\begin{equation}\label{53}
  \|\theta_{u}^{k+1}\|^2+\|\theta_{v}^{k+1}\|^2+\kappa |{\theta}_{u}^{k+1}|_{\frac{\a}{2}}^{2} \leq CN^{-2\a},
\end{equation}
which implies that we have completed the mathematics induction. Therefore \eqref{33} holds and then \eqref{34} is obtained similar as \eqref{47}. We have completed the proof.
% \begin{equation}\label{54}
%  \|\theta_{u}^{n}\|+\|\theta_{v}^{n}\|+\kappa |\bar{\theta}_{u}^{n}|_{\frac{\a}{2}} \leq CN^{-\a}, 0\leq n \leq K,
%\end{equation}
%and get
% $\|u_{N}^{n}\|_{\infty}\leq M_{2}, ~0\leq n \leq K$.
 \end{proof}
% From theorem \ref{theo31} and \ref{theo32}, we can get the proof of  theorem \ref{theo3.3}.
% \begin{Theo}\label{theo3.3}
%Let $u$ and $\{u_{N}^{n}\}_{n=0}^{N}$ be the solutions of \eqref{sin1}-\eqref{sin2} and \eqref{22}-\eqref{27},respectively. Assume $u$ satisfying \eqref{bound1} % and $F'(u)$ is bounded for $|u|\leq M_{2}$, $E(u)\geq C^{*}$, where $M_{2}$ and
%and $C$ are  positive constant independent of $N$ and $\tau$. Then there exist two positive constants $\tau^{*}$ and $N^{*}$, when $\tau \leq \tau^{*}$ and $N\geq N^{*}$, it holds \begin{equation}\label{55}
%  \|u^{n}-u_{N}^{n}\|+\|v^{n}-v_{N}^{n}\|+\kappa|u^{n}-u_{N}^{n}|_{\frac{\a}{2}}+(r^{n}-R^{n})\leq C(\tau^2+N^{-m}).
%\end{equation}
% \end{Theo}
 %\begin{proof}
 \section{The proof of the Theorem \ref{theo3.3}}
 For convenience, we denote
\begin{eqnarray*}
% \nonumber to remove numbering (before each equation)
  &&e_{Nu}^{n}=u^{n}-u_{N}^{n}=u^{n}-P_{N}u^{n}+P_{N}u^{n}-u_{N}^{n}=\eta_{u}^{n}+\xi_{u}^{n}, \nonumber\\
 &&e_{Nv}^{n}=v^{n}-v_{N}^{n}=v^{n}-P_{N}v^{n}+P_{N}v^{n}-v_{N}^{n}=\eta_{v}^{n}+\xi_{v}^{n}.
\end{eqnarray*}
%Similar proof of Theorem \ref{theo31} and \ref{theo32}.
%$\|\tilde{\xi}_{u}^{\frac{1}{2}}\| \leq C(\tau^{\frac{3}{2}}+N^{-m})$ easily holds from \eqref{sin1}-\eqref{sin2}, \eqref{25}-\eqref{27} and \eqref{sinetay11}-\eqref{sinetay33}.
{\color{blue}{In the convergence analysis, we use the mathematical induction method.
% We could get $\|\tilde{\xi}_{u}^{\frac{1}{2}}\| \leq C(\tau^{\frac{3}{2}}+N^{-m})$ to make the  equation \eqref{55}  holds for $n=1$. Similar to the following main proof, thus the proof of the equation \eqref{55}  holds for $n=1$ and $\|\tilde{\xi}_{u}^{\frac{1}{2}}\| \leq C(\tau^{\frac{3}{2}}+N^{-m})$ are omitted. We assume the \eqref{55}  holds for $n=2,\ldots, k-1$.
From \eqref{sin1}-\eqref{sin2}, \eqref{sinetay11}-\eqref{sinetay33} and \eqref{25}-\eqref{27},  the error equations at $t=t_{\frac{1}{2}}$ are the following:
\begin{eqnarray}
% \nonumber to remove numbering (before each equation)
 &&(\frac{\tilde{\xi}_{u}^{\frac{1}{2}}}{\tau/2},\psi)=(\tilde{\xi}_{v}^{\frac{1}{2}},\psi)+(\tilde{Q}_{1}^{\frac{1}{2}},\psi), \label{add31}\\
%&&v_{t}+\gamma v-K_{x}\frac{\partial^{2\alpha_{1}}u}{\partial|x|^{2\alpha_{1}}}
%  -K_{y}\frac{\partial^{2\alpha_{2}}u}{\partial|x|^{2\alpha_{2}}}+f(u)=g(x,y,t) ,  \label{sinene2}\\
  &&(\frac{\tilde{\xi}_{v}^{\frac{1}{2}}}{\tau/2},\varphi)+\kappa ((-\Delta)^{\frac{\a}{2}}\tilde{\xi}_{u}^{\frac{1}{2}},\varphi)+\gamma_{1}((-\Delta)^{\frac{\a}{2}}\tilde{\xi}_{v}^{\frac{1}{2}},\varphi)
  +\gamma_{2}(\tilde{\xi}_{v}^{\frac{1}{2}},\varphi)
  +(\tilde{R}_{2}^{\frac{1}{2}},\varphi) =(\tilde{Q}_{2}^{\frac{1}{2}},\varphi) ,  \label{add32}\\
  && \frac{\tilde{e}_{r}^{\frac{1}{2}}}{\tau/2}=\tilde{F}_{1}^{\frac{1}{2}} +\tilde{Q}_{3}^{\frac{1}{2}}, \label{add33}
\end{eqnarray}
where
\begin{small}
\begin{align}
&\tilde{R}_{2}^{\frac{1}{2}}= \tilde{r}^{\frac{1}{2}}\frac{F'(u^{0})}{\sqrt{E(u^{0})}}- \tilde{R}^{\frac{1}{2}}\frac{F'(u_{N}^{0})}{\sqrt{E(u_{N}^{0})}}=
\tilde{e}_{r}^\frac{1}{2}\frac{F'(u^{0})}{\sqrt{E(u^{0})}}+\tilde{R}^{\frac{1}{2}}(\frac{F'(u^{0})}{\sqrt{E(u^{0})}}-\frac{F'(u_{N}^{0})}{\sqrt{E(u_{N}^{0})}}),   \label{add34} \\
&\tilde{F}_{1}^{\frac{1}{2}} = \frac{1}{2\sqrt{E(u^{0})}}\Big(F'(u^{0}),\frac{\tilde{u}^{\frac{1}{2}}-u^{0}}{\tau/2}\Big)
  -\frac{1}{2\sqrt{E(u_{N}^{0})}}\Big(F'(u_{N}^{0}),\frac{\tilde{u}_{N}^{\frac{1}{2}}-u_{N}^{0}}{\tau/2}\Big)
  \nonumber \\
 &=\Big(\frac{1}{2\sqrt{E(u^{0})}}F'(u^{0})-\frac{1}{2\sqrt{E(u_{N}^{0})}}F'(u_{N}^{0}),\frac{\tilde{u}^{\frac{1}{2}}-u^{0}}{\tau/2}\Big)
 +\frac{1}{2\sqrt{E(u_{N}^{0})}}\Big(F'(u_{N}^{0}),\frac{\tilde{e}_{u}^{\frac{1}{2}}-e_{u}^{0}}{\tau/2}\Big).\label{add35}
\end{align}
\end{small}
Letting $\psi=2\tau \tilde{\xi}_{u}^{\frac{1}{2}}$ and $\varphi=2\tau \tilde{\xi}_{v}^{\frac{1}{2}}$ in \eqref{add31}-\eqref{add32}, and multiplying $2\tau\tilde{e}_{r}^{\frac{1}{2}}$ on the both sides of \eqref{add33}, and using $$(\tilde{\xi}_{u}^{\frac{1}{2}},(-\Delta)^{\frac{\a}{2}}\tilde{\xi}_{u}^{\frac{1}{2}})
=2\tau(\tilde{\xi}_{v}^{\frac{1}{2}},(-\Delta)^{\frac{\a}{2}}\tilde{\xi}_{u}^{\frac{1}{2}})
+2\tau(\tilde{Q}_{1}^{\frac{1}{2}},(-\Delta)^{\frac{\a}{2}}\tilde{\xi}_{u}^{\frac{1}{2}}),$$  we have
\begin{eqnarray}
% \nonumber to remove numbering (before each equation)
&& \|\tilde{\xi}_{u}^{\frac{1}{2}}\|^2=2\tau(\tilde{\xi}_{v}^{\frac{1}{2}},\tilde{\xi}_{u}^{\frac{1}{2}})
+2\tau(\tilde{Q}_{1}^{\frac{1}{2}},\tilde{\xi}_{u}^{\frac{1}{2}}), \label{add36}\\
%&&v_{t}+\gamma v-K_{x}\frac{\partial^{2\alpha_{1}}u}{\partial|x|^{2\alpha_{1}}}
%  -K_{y}\frac{\partial^{2\alpha_{2}}u}{\partial|x|^{2\alpha_{2}}}+f(u)=g(x,y,t) ,  \label{sinene2}\\
  &&\|\tilde{\xi}_{v}^{\frac{1}{2}}\|^2+\kappa |\tilde{\xi}_{u}^{\frac{1}{2}}|_{\frac{\a}{2}}^2
  +2\tau \gamma_{1}|\tilde{\xi}_{v}^{\frac{1}{2}}|_{\frac{\a}{2}}^{2}+2\gamma_{2}\tau \|\tilde{\xi}_{v}^{\frac{1}{2}}\|^2 \nonumber \\
  &&
   =2\tau(\tilde{Q}_{2}^{\frac{1}{2}},\tilde{\xi}_{v}^{\frac{1}{2}})-2\tau (\tilde{R}_{2}^{\frac{1}{2}},\tilde{\xi}_{v}^{\frac{1}{2}}) +2\tau \kappa ((-\Delta)^{\frac{\a}{4}}\tilde{Q}_{1}^{\frac{1}{2}},(-\Delta)^{\frac{\a}{4}}\tilde{\xi}_{u}^{\frac{1}{2}}), \label{add37}\\
  &&(\tilde{e}_{r}^{\frac{1}{2}})^{2}=2\tau(\tilde{F}_{1}^{\frac{1}{2}},\tilde{e}_{r}^{\frac{1}{2}}) +2\tau(\tilde{Q}_{3}^{\frac{1}{2}},\tilde{e}_{r}^{\frac{1}{2}}).\label{add38}
\end{eqnarray}
From \eqref{add34}-\eqref{add38}, $|\tilde{\eta}_{u}^{\frac{1}{2}}/\frac{\tau}{2}|\leq N^{-m}\tau^{-1}/2\int_{t_{0}}^{t_{\frac{1}{2}}}\|u_{t}\|^2ds$, using the Cauchy-Schwarz inequality and Young's inequality, we could have
\begin{equation*}
% \nonumber to remove numbering (before each equation)
 |\tilde{e}_{r}^{\frac{1}{2}}|^2+\|\tilde{\xi}_{u}^{\frac{1}{2}}\|^2+\|\tilde{\xi}_{v}^{\frac{1}{2}}\|^2+\kappa |\tilde{\xi}_{u}^{\frac{1}{2}}|_{\frac{\a}{2}}^2 \leq C\tau K_{*},
\end{equation*}
where $K_{*}=\|\eta_{u}^{0}\|^{2}+N^{-2m}\tau^{-1}/2\int_{t_{0}}^{t_{\frac{1}{2}}}\|u_{t}\|^2ds+\|\tilde{Q}_{3}^{\frac{1}{2}}\|^2
+\|\tilde{Q}_{1}^{\frac{1}{2}}\|^2+\|\tilde{Q}_{2}^{\frac{1}{2}}\|^2+|\tilde{Q}_{1}^{\frac{1}{2}}|_{\a/2}^2+\tau^{3}\int_{t_{0}}^{t_{\frac{1}{2}}}\|\frac{\partial^{3}u}{\partial t^{3}}(s)\|^2ds $. Then we get $|\tilde{e}_{r}^{\frac{1}{2}}|^2+\|\tilde{\xi}_{u}^{\frac{1}{2}}\|^2+\|\tilde{\xi}_{v}^{\frac{1}{2}}\|^2+\kappa |\tilde{\xi}_{u}^{\frac{1}{2}}|_{\frac{\a}{2}}^2 \leq C(\tau^{3}+N^{-2m})$.

Then, we prove the equation \eqref{55} holds for $n=1$. From \eqref{sin1}-\eqref{sin2}, \eqref{sinetay1}-\eqref{sinetay3} and \eqref{22}-\eqref{24}, the error equations for $n=0$ follow that
\begin{align}
&(\delta_{t}\xi_{u}^{\frac{1}{2}},\psi)=(\bar{\xi}_{v}^{\frac{1}{2}},\psi)+(Q_{1}^{\frac{1}{2}},\psi), \label{add39}\\
%&&v_{t}+\gamma v-K_{x}\frac{\partial^{2\alpha_{1}}u}{\partial|x|^{2\alpha_{1}}}
%  -K_{y}\frac{\partial^{2\alpha_{2}}u}{\partial|x|^{2\alpha_{2}}}+f(u)=g(x,y,t) ,  \label{sinene2}\\
  &(\delta_{t}\xi_{v}^{\frac{1}{2}},\varphi)+\kappa ((-\Delta)^{\frac{\a}{2}}\bar{\xi}_{u}^{\frac{1}{2}},\varphi)+\gamma_{1}((-\Delta)^{\frac{\a}{2}}\bar{\xi}_{v}^{\frac{1}{2}},\varphi)+
  \gamma_{2}(\bar{\xi}_{v}^{\frac{1}{2}},\varphi) %\nonumber\\
 +(R^{\frac{1}{2}}_{2},\varphi) =(Q_{2}^{\frac{1}{2}},\varphi) ,  \label{add40}\\
 &\delta_{t}e_{r}^{\frac{1}{2}}=\bar{F}^{\frac{1}{2}} +Q_{3}^{\frac{1}{2}}, \label{add41}
%  &&(\delta_{t}R^{n+\frac{1}{2}},\vartheta)=(\frac{1}{2\sqrt{E(\tilde{u}_{N}^{n+\frac{1}{2}})}}(F'(\tilde{u}_{N}^{n+\frac{1}{2}}),\delta_{t}u_{N}^{n+\frac{1}{2}}),\vartheta). \label{30}
\end{align}
where
\begin{small}
\begin{align}
&R_{2}^{\frac{1}{2}}= \bar{r}^{\frac{1}{2}}\frac{F'(\tilde{u}^{\frac{1}{2}})}{\sqrt{E(\tilde{u}^{\frac{1}{2}})}} - \bar{R}^{\frac{1}{2}}\frac{F'(\tilde{u}_{N}^{n+\frac{1}{2}})}{\sqrt{E(\tilde{u}_{N}^{\frac{1}{2}})}},   \label{59} \\
&\bar{F}^{\frac{1}{2}}=\frac{1}{2\sqrt{E(\tilde{u}^{n+\frac{1}{2}})}}(F'(\tilde{u}^{\frac{1}{2}}),\delta_{t}u^{\frac{1}{2}})-
\frac{1}{2\sqrt{E(\tilde{u}_{N}^{\frac{1}{2}})}}(F'(\tilde{u}_{N}^{\frac{1}{2}}),\delta_{t}u_{N}^{\frac{1}{2}}).\label{60}
\end{align}
\end{small}
Since the proof for $n=0$ is similar to the following proof, we just presents the results for k=0. It follows that
\begin{eqnarray*}
% \nonumber to remove numbering (before each equation)
  &&  |e_{r}^{1}|^2+  \|\xi_{u}^{1}\|^2+\|\xi_{v}^{1}\|^2+\kappa |\xi_{u}^{1}|_{\frac{\a}{2}}^{2}\leq
  C\tau(\|\eta_{v}^{0}\|^{2}+\|\eta_{u}^{0}\|^{2}
+\|Q_{2}^{\frac{1}{2}}\|^2+\|Q_{1}^{\frac{1}{2}}\|^2+\|Q_{3}^{\frac{1}{2}}\|^2+|Q_{1}^{\frac{1}{2}}|_{\a/2}^2+\|\tilde{\eta}_{u}^{\frac{1}{2}}\|^{2}
+\|\tilde{\xi}_{u}^{\frac{1}{2}}\|^{2} \nonumber\\
 && +\tau^{3}\int_{t_{0}}^{t_{1}}\|\frac{\partial^{3}u}{\partial t^{3}}(s)\|^2ds+CN^{-2m} \tau^{-1}\int_{t_{0}}^{t_{1}}\|u_{t}\|^2ds)
 \leq  C(\tau^4+N^{-2m}).
\end{eqnarray*}

%For simplicity, we just give the main parts of whole proof, i.e., to prove the equation \eqref{55} holds for $n=k$. }} %by the mathematical induction method for \eqref{55}.
Next, we assume the \eqref{55}  holds for $n=2,\ldots, k$ and will prove that the equation \eqref{55} holds for $n=k+1$.}}
From \eqref{sin1}-\eqref{sin2}, \eqref{sinetay1}-\eqref{sinetay3} and \eqref{22}-\eqref{24}, we can get
\begin{align}
&(\delta_{t}\xi_{u}^{n+\frac{1}{2}},\psi)=(\bar{\xi}_{v}^{n+\frac{1}{2}},\psi)+(Q_{1}^{n+\frac{1}{2}},\psi), \label{56}\\
%&&v_{t}+\gamma v-K_{x}\frac{\partial^{2\alpha_{1}}u}{\partial|x|^{2\alpha_{1}}}
%  -K_{y}\frac{\partial^{2\alpha_{2}}u}{\partial|x|^{2\alpha_{2}}}+f(u)=g(x,y,t) ,  \label{sinene2}\\
  &(\delta_{t}\xi_{v}^{n+\frac{1}{2}},\varphi)+\kappa ((-\Delta)^{\frac{\a}{2}}\bar{\xi}_{u}^{n+\frac{1}{2}},\varphi)+\gamma_{1}((-\Delta)^{\frac{\a}{2}}\bar{\xi}_{v}^{n+\frac{1}{2}},\varphi)+
  \gamma_{2}(\bar{\xi}_{v}^{n+\frac{1}{2}},\varphi) %\nonumber\\
 +(R^{n+\frac{1}{2}}_{2},\varphi) =(Q_{2}^{n+\frac{1}{2}},\varphi) ,  \label{57}\\
 &\delta_{t}e_{r}^{n+\frac{1}{2}}=\bar{F}^{n+\frac{1}{2}} +Q_{3}^{n+\frac{1}{2}}, \label{58}
%  &&(\delta_{t}R^{n+\frac{1}{2}},\vartheta)=(\frac{1}{2\sqrt{E(\tilde{u}_{N}^{n+\frac{1}{2}})}}(F'(\tilde{u}_{N}^{n+\frac{1}{2}}),\delta_{t}u_{N}^{n+\frac{1}{2}}),\vartheta). \label{30}
\end{align}
where
\begin{small}
\begin{align}
&R_{2}^{n+\frac{1}{2}}= \bar{r}^{n+\frac{1}{2}}\frac{F'(\tilde{u}^{n+\frac{1}{2}})}{\sqrt{E(\tilde{u}^{n+\frac{1}{2}})}} - \bar{R}^{n+\frac{1}{2}}\frac{F'(\tilde{u}_{N}^{n+\frac{1}{2}})}{\sqrt{E(\tilde{u}_{N}^{n+\frac{1}{2}})}},   \label{59} \\
&\bar{F}^{n+\frac{1}{2}}=\frac{1}{2\sqrt{E(\tilde{u}^{n+\frac{1}{2}})}}(F'(\tilde{u}^{n+\frac{1}{2}}),\delta_{t}u^{n+\frac{1}{2}})-
\frac{1}{2\sqrt{E(\tilde{u}_{N}^{n+\frac{1}{2}})}}(F'(\tilde{u}_{N}^{n+\frac{1}{2}}),\delta_{t}u_{N}^{n+\frac{1}{2}}).\label{60}
\end{align}
\end{small}
{\color{blue}{
Letting $\psi=2\tau \bar{\xi}_{u}^{n+\frac{1}{2}}$ and $\varphi=2\tau \bar{\xi}_{v}^{n+\frac{1}{2}}$ in \eqref{56}-\eqref{57}, and multiplying $2\tau\bar{e}_{r}^{n+\frac{1}{2}}$ on the both sides of \eqref{58}, and using $$(\delta_{t}\xi_{u}^{n+\frac{1}{2}},(-\Delta)^{\frac{\a}{2}}\bar{\xi}_{u}^{n+\frac{1}{2}})
=(\xi_{v}^{n+\frac{1}{2}},(-\Delta)^{\frac{\a}{2}}\bar{\xi}_{u}^{n+\frac{1}{2}})
+(Q_{1}^{n+\frac{1}{2}},(-\Delta)^{\frac{\a}{2}}\bar{\xi}_{u}^{n+\frac{1}{2}}),$$  we can get
\begin{eqnarray}
% \nonumber to remove numbering (before each equation)
&& \|\xi_{u}^{n+1}\|^2-\|\xi_{u}^{n}\|^2=2\tau(\bar{\xi}_{v}^{n+\frac{1}{2}},\bar{\xi}_{u}^{n+\frac{1}{2}})
+2\tau(Q_{1}^{n+\frac{1}{2}},\bar{\xi}_{u}^{n+\frac{1}{2}}), \label{61}\\
%&&v_{t}+\gamma v-K_{x}\frac{\partial^{2\alpha_{1}}u}{\partial|x|^{2\alpha_{1}}}
%  -K_{y}\frac{\partial^{2\alpha_{2}}u}{\partial|x|^{2\alpha_{2}}}+f(u)=g(x,y,t) ,  \label{sinene2}\\
  &&\|\xi_{v}^{n+1}\|^2-\|\xi_{v}^{n}\|^2+\kappa (|\xi_{u}^{n+1}|_{\frac{\a}{2}}^{2}-|\xi_{u}^{n}|_{\frac{\a}{2}}^{2})
  +2\tau \gamma_{1}|\bar{\xi}_{v}^{n+\frac{1}{2}}|_{\frac{\a}{2}}^{2}+2\gamma_{2}\tau \|\bar{\xi}_{v}^{n+\frac{1}{2}}\|^2 \nonumber \\
  &&
   =2\tau(Q_{2}^{n+\frac{1}{2}},\bar{\xi}_{v}^{n+\frac{1}{2}})-2\tau (R_{2}^{n+\frac{1}{2}},\bar{\xi}_{v}^{n+\frac{1}{2}}) +2\tau \kappa ((-\Delta)^{\frac{\a}{4}}Q_{1}^{n+\frac{1}{2}},(-\Delta)^{\frac{\a}{4}}\bar{\xi}_{u}^{n+\frac{1}{2}}), \label{62}\\
  &&(e_{r}^{n+1})^{2}-(e_{r}^{n})^{2}=2\tau(\bar{F}^{n+\frac{1}{2}},\bar{e}_{r}^{n+\frac{1}{2}}) +2\tau(Q_{3}^{n+\frac{1}{2}},\bar{e}_{r}^{n+\frac{1}{2}}),\label{63}
\end{eqnarray}
where
\begin{small}
\begin{align}
% \nonumber to remove numbering (before each equation)
 & R_{2}^{n+\frac{1}{2}}=r^{n+\frac{1}{2}}(\frac{F'(\tilde{u}^{n+\frac{1}{2}})}{\sqrt{E(\tilde{u}^{n+\frac{1}{2}})}}
  -\frac{F'(\tilde{u}_{N}^{n+\frac{1}{2}})}{\sqrt{E(\tilde{u}_{N}^{n+\frac{1}{2}})}})+
e_{r}^{n+\frac{1}{2}}\frac{F'(\tilde{u}_{N}^{n+\frac{1}{2}})}{\sqrt{E(\tilde{u}_{N}^{n+\frac{1}{2}})}}, \label{64}\\
&\bar{F}^{n+\frac{1}{2}}=\bigg(\frac{1}{2\sqrt{E(\tilde{u}^{n+\frac{1}{2}})}}F'(\tilde{u}^{n+\frac{1}{2}})
-\frac{1}{2\sqrt{E(\tilde{u}_{N}^{n+\frac{1}{2}})}}F'(\tilde{u}_{N}^{n+\frac{1}{2}}),\delta_{t}u^{n+\frac{1}{2}}\bigg)
+\bigg(\frac{1}{2\sqrt{E(\tilde{u}_{N}^{n+\frac{1}{2}})}}F'(\tilde{u}_{N}^{n+\frac{1}{2}}),\delta_{t}\bar{e}_{u}^{n+\frac{1}{2}}\bigg),\label{65}\\
&\frac{1}{\sqrt{E(\tilde{u}^{n+\frac{1}{2}})}}F'(\tilde{u}^{n+\frac{1}{2}})
-\frac{1}{\sqrt{E(\tilde{u}_{N}^{n+\frac{1}{2}})}}F'(\tilde{u}_{N}^{n+\frac{1}{2}})=
 \nonumber\\
&\frac{F'(\tilde{u}^{n+\frac{1}{2}})-F'(\tilde{u}_{N}^{n+\frac{1}{2}})}{\sqrt{E(\tilde{u}^{n+\frac{1}{2}})}}+
\frac{F'(\tilde{u}_{N}^{n+\frac{1}{2}})(E(\tilde{u}^{n+\frac{1}{2}})-E(\tilde{u}_{N}^{n+\frac{1}{2}}))}
{\sqrt{E(\tilde{u}_{N}^{n+\frac{1}{2}})}\sqrt{E(\tilde{u}^{n+\frac{1}{2}})}
\bigg(\sqrt{E(\tilde{u}^{n+\frac{1}{2}})}+\sqrt{E(\tilde{u}_{N}^{n+\frac{1}{2}})}\bigg)}, \label{66}\\
&\bigg(\frac{1}{2\sqrt{E(\tilde{u}_{N}^{n+\frac{1}{2}})}}F'(\tilde{u}_{N}^{n+\frac{1}{2}}),\delta_{t}\bar{e}_{u}^{n+\frac{1}{2}}\bigg)
=%\bigg(\frac{1}{2\sqrt{E(\tilde{u}_{N}^{n+\frac{1}{2}})}}F'(\tilde{u}_{N}^{n+\frac{1}{2}}),\delta_{t}\bar{\xi}_{u}^{n+\frac{1}{2}}\bigg)
\bigg(\frac{1}{2\sqrt{E(\tilde{u}_{N}^{n+\frac{1}{2}})}}F'(\tilde{u}_{N}^{n+\frac{1}{2}}),\delta_{t}\bar{\eta}_{u}^{n+\frac{1}{2}}+\delta_{t}\bar{\xi}_{u}^{n+\frac{1}{2}}\bigg)\nonumber\\
&=\bigg(\frac{1}{2\sqrt{E(\tilde{u}_{N}^{n+\frac{1}{2}})}}F'(\tilde{u}_{N}^{n+\frac{1}{2}}),\delta_{t}\bar{\eta}_{u}^{n+\frac{1}{2}}\bigg)
+\bigg(\frac{1}{2\sqrt{E(\tilde{u}_{N}^{n+\frac{1}{2}})}}F'(\tilde{u}_{N}^{n+\frac{1}{2}}),\bar{\xi}_{v}^{n+\frac{1}{2}}+Q_{1}^{n+\frac{1}{2}}\bigg). \label{add1}
\end{align}
\end{small}
%\begin{small}
%\begin{align}
%&\bigg(\frac{1}{2\sqrt{E(\tilde{u}_{N}^{n+\frac{1}{2}})}}F'(\tilde{u}_{N}^{n+\frac{1}{2}}),\delta_{t}\bar{e}_{u}^{n+\frac{1}{2}}\bigg)
%=%\bigg(\frac{1}{2\sqrt{E(\tilde{u}_{N}^{n+\frac{1}{2}})}}F'(\tilde{u}_{N}^{n+\frac{1}{2}}),\delta_{t}\bar{\xi}_{u}^{n+\frac{1}{2}}\bigg)
%\bigg(\frac{1}{2\sqrt{E(\tilde{u}_{N}^{n+\frac{1}{2}})}}F'(\tilde{u}_{N}^{n+\frac{1}{2}}),\delta_{t}\bar{\eta}_{u}^{n+\frac{1}{2}}+\delta_{t}\bar{\xi}_{u}^{n+\frac{1}{2}}\bigg)\nonumber\\
%&=\bigg(\frac{1}{2\sqrt{E(\tilde{u}_{N}^{n+\frac{1}{2}})}}F'(\tilde{u}_{N}^{n+\frac{1}{2}}),\delta_{t}\bar{\eta}_{u}^{n+\frac{1}{2}}\bigg)
%+\bigg(\frac{1}{2\sqrt{E(\tilde{u}_{N}^{n+\frac{1}{2}})}}F'(\tilde{u}_{N}^{n+\frac{1}{2}}),\bar{\xi}_{v}^{n+\frac{1}{2}}+Q_{1}^{n+\frac{1}{2}}\bigg). \label{add1}
%\end{align}
%\end{small}
From Theorem \ref{theo32}, we get $u_{N}^{n}$ and $\tilde{u}_{N}^{n+\frac{1}{2}}$ are bounded. Following the proof of \eqref{14} and from \eqref{65}-\eqref{add1}, \eqref{tay2}, we obtain
\begin{align}
&\bigg|\bigg(\frac{1}{2\sqrt{E(\tilde{u}_{N}^{n+\frac{1}{2}})}}F'(\tilde{u}_{N}^{n+\frac{1}{2}}),\delta_{t}\bar{\eta}_{u}^{n+\frac{1}{2}}\bigg)\bigg|\leq C(\|\tilde{u}_{N}^{n+\frac{1}{2}}\|_{\infty})|\delta_{t}\bar{\eta}_{u}^{n+\frac{1}{2}}|\leq  CN^{-m} \tau^{-1}\int_{t_{n}}^{t_{n+1}}\|u_{t}^{n+1/2}\|ds, \label{in12}\\
&\tau |(R_{2}^{n+\frac{1}{2}},\bar{\xi}_{v}^{n+\frac{1}{2}})|\leq C\tau\bigg( \|\xi_{v}^{n+1}\|^{2}+\|\xi_{v}^{n}\|^{2} +(e_{r}^{n+1})^{2}+(e_{r}^{n})^{2}+\|\tilde{\xi}_{u}^{n+\frac{1}{2}}\|^{2}\bigg)+\nonumber\\
&~~~~~~~~~~~~~~~~~~~~~~C\tau\bigg(\|\eta_{v}^{n+1}\|^{2}+\|\eta_{v}^{n}\|^{2}+\|\eta_{u}^{n}\|^{2}+\|\eta_{u}^{n-1}\|^{2}\bigg), \label{67}\\
&\tau|(\bar{F}^{n+\frac{1}{2}},\bar{e}_{r}^{n+\frac{1}{2}})|\leq C\tau\bigg(\|\tilde{\xi}_{u}^{n+\frac{1}{2}}\|^{2}+\|\tilde{\eta}_{u}^{n+\frac{1}{2}}\|^{2}+ \|\xi_{v}^{n+1}\|^{2}+\|\xi_{v}^{n}\|^{2}+|e_{r}^{n+1}|^2+|e_{r}^{n}|^2 \nonumber\\
&~~~~~+\|\eta_{v}^{n+1}\|^{2}+\|\eta_{v}^{n}\|^{2}+\|Q_{1}^{n+\frac{1}{2}}\|^2\bigg)+
C\tau^{4}\int_{t_{n}}^{t_{n+1}}\|\frac{\partial^{3}u}{\partial t^{3}}(s)\|^2ds +CN^{-2m}\int_{t_{n}}^{t_{n+1}}\|u_{t}^{n+1/2}\|^2ds. \label{68}
\end{align}
For convenience, we let {\color{blue}{$E^{n}:= |e_{r}^{n}|^2+ \|\xi_{u}^{n}\|^2 +\|\xi_{v}^{n}\|^2+\kappa |\xi_{u}^{n}|_{\frac{\a}{2}}^2$}}. Then, from \eqref{61}-\eqref{68}, we get
\begin{equation}
  E^{n+1}-E^{n}\leq C\tau (E^{n+1}+E^{n})+C\tau \mathfrak{R},
\end{equation}
where $\mathfrak{R}=\|\eta_{v}^{n+1}\|^{2}+\|\eta_{v}^{n}\|^{2}+\|\eta_{u}^{n}\|^{2}+\|\eta_{u}^{n-1}\|^{2}
+\|Q_{2}^{n+\frac{1}{2}}\|^2+\|Q_{1}^{n+\frac{1}{2}}\|^2+\|Q_{3}^{n+\frac{1}{2}}\|^2+|Q_{1}^{n+\frac{1}{2}}|_{\a/2}^2+\|\tilde{\eta}_{u}^{n+\frac{1}{2}}\|^{2}
+\|\tilde{\xi}_{u}^{n+\frac{1}{2}}\|^{2}
+\tau^{3}\int_{t_{n}}^{t_{n+1}}\|\frac{\partial^{3}u}{\partial t^{3}}(s)\|^2ds+CN^{-2m} \tau^{-1}\int_{t_{n}}^{t_{n+1}}\|u_{t}^{n+1/2}\|^2ds.$
%Using the Cauchy-Schwarz inequality and Young's inequality, summing over $n$ for $n=0,\ldots,k-1$, and
By virtue of discrete Gronwall's inequality,
one arrives at $E^{k+1}\leq C(N^{-2m}+\tau^{4})$, which shows that the equation \eqref{55} holds  for $n=k+1$ and we have completed the mathematics induction.
Thus, without losing generality,  we conclude
\begin{equation}\label{69}
  |e_{r}^{n}|^2+  \|\xi_{u}^{n}\|^2+\|\xi_{v}^{n}\|^2+\kappa |\xi_{u}^{n}|^2_{\frac{\a}{2}}\leq C(\tau^4+N^{-2m}).
\end{equation}
Using Lemma \ref{le1.5} and the triangle inequality,  we have
\begin{equation}\label{add3}
  \|u^{n}-u_{N}^{n}\|^2+\|v^{n}-v_{N}^{n}\|^2+|r^{n}-R^{n}|^2\leq C(\tau^4+N^{-2m}), ~ \kappa|u^{n}-u_{N}^{n}|^2_{\frac{\a}{2}}\leq C(\tau^4+N^{\a-2m}).
\end{equation}
}}
%In the  above convergence analysis, we used the mathematical induction method where \eqref{55} holds for $n=1$ and $\|\tilde{\xi}_{u}^{\frac{1}{2}}\| \leq C(\tau^{\frac{3}{2}}+N^{-m})$ which are omitted in this proof.
{\color{blue}{ The proof of Theorem  \ref{theo3.3} is end when $\tau \leq \tau^{*}, ~N \geq N^{*}$.

For the other three cases, we denote $S_0:=(\tau^{*})^{4}+(N^{*})^{-2m} $.
\begin{itemize}
  \item[1.] When $\tau \geq \tau^{*}, ~N\leq N^{*}$,  there holds $\tau^{4}+N^{-2m} \geq S_{0}$. By using the continuous and discrete energy dissipation, i.e., $H(t)\leq H(t_{0})$ and $\mathbf{H}^{n}\leq \mathbf{H}^{0}$, we can get
\begin{align*}\label{70}
  & \|u^{n}-u_{N}^{n}\|^2+\|v^{n}-v_{N}^{n}\|^2+|r^{n}-R^{n}|^2\\
  & \leq \|u^{n}\|^2+\|u_{N}^{n}\|^2+\|v^{n}\|^2+\|v_{N}^{n}\|^2+|r^{n}|^2+|R^{n}|^2+\kappa|u^{n}|^2_{\frac{\a}{2}}+\kappa|u_{N}^{n}|^2_{\frac{\a}{2}}\\
  & \leq (\frac{2}{\kappa}+2)(H(t_{0})+\mathbf{H}^{0}) \leq C(\tau^{4}+N^{-2m}),\\
  & \kappa|u^{n}-u_{N}^{n}|_{\frac{\a}{2}}^2 \leq \kappa|u^{n}|^2_{\frac{\a}{2}}+\kappa|u_{N}^{n}|^2_{\frac{\a}{2}} \leq (\frac{2}{\kappa}+2)(H(t_{0})+\mathbf{H}^{0}) \leq C(\tau^{4}+N^{\a-2m}),
\end{align*}
where %$C=\sqrt{2\kappa}(\sqrt{H(t_{0})}+\sqrt{\mathbf{H}^{0}})/S_{0}$.
$C=(\frac{2}{\kappa}+2)(H(t_{0})+\mathbf{H}^{0})/S_{0}$.
  \item[2.] When $\tau \leq \tau^{*}, ~N \leq N^{*}$, it obviously follows that
$$C_{1}N^{-2m}\geq S_{0}\geq \tau^{4}+(N^{*})^{-2m},$$
where $C_{1}=\frac{(N^{*})^{-2m}+(\tau^{*})^{4}}{(N^{*})^{-2m}}$.
%\begin{eqnarray*}
%% \nonumber to remove numbering (before each equation)
% C_{1}N^{-m}\geq S_{0}.
%\end{eqnarray*}
In this case, we also have
\begin{eqnarray*}
% \nonumber to remove numbering (before each equation)
&& \|u^{n}-u_{N}^{n}\|^2+\|v^{n}-v_{N}^{n}\|^2+|r^{n}-R^{n}|^2
   \leq (\frac{2}{\kappa}+2)(H(t_{0})+\mathbf{H}^{0}) \leq C(\tau^{4}+N^{-2m}),\\
  && \kappa|u^{n}-u_{N}^{n}|_{\frac{\a}{2}} \leq C(\tau^{4}+N^{\a-2m}),
\end{eqnarray*}
where $C=(\frac{2}{\kappa}+2)(H(t_{0})+\mathbf{H}^{0})C_{1}/S_{0}$.
%$\|u^{n}-u_{N}^{n}\|+\|v^{n}-v_{N}^{n}\|+|r^{n}-R^{n}|
%   \leq C(\tau^{2}+N^{-m})$ and $ \kappa|u^{n}-u_{N}^{n}|_{\frac{\a}{2}} \leq C(\tau^{2}+N^{\frac{\a}{2}-m})$.
  \item[3.] When $\tau \geq \tau^{*}, ~N \geq  N^{*}$, there holds $$C_{2}\tau^{4}\geq S_{0}\geq (\tau^{*})^{4}+(N)^{-2m},$$
where $C_{2}=\frac{(N^{*})^{-2m}+(\tau^{*})^{4}}{(\tau^{*})^{4}}$.
%The \eqref{add3} holds for this case.
In this case, we also have
\begin{eqnarray*}
% \nonumber to remove numbering (before each equation)
&& \|u^{n}-u_{N}^{n}\|^2+\|v^{n}-v_{N}^{n}\|^2+|r^{n}-R^{n}|^2
   \leq (\frac{2}{\kappa}+2)(H(t_{0})+\mathbf{H}^{0}) \leq C(\tau^{4}+N^{-2m}),\\
  && \kappa|u^{n}-u_{N}^{n}|_{\frac{\a}{2}} \leq C(\tau^{4}+N^{\a-2m}),
\end{eqnarray*}
where $C=(\frac{2}{\kappa}+2)(H(t_{0})+\mathbf{H}^{0})C_{2}/S_{0}$.
\end{itemize}
In this work, $\kappa$ is suitable positive constant and does not effect the error estimate of our scheme.}}
Therefore, we complete the proof of Theorem  \ref{theo3.3}. $\Box$
\begin{Rem}\label{remak1}
Since the FGWE reduces to classical generalized wave equation when $\a=2$, the  unconditional energy dissipation and convergence analysis for the FGWE in this paper can naturally {\color{blue} be} applied to that of conventional generalized wave equation.
\end{Rem}

%\end{proof}
\section{Numerical experiments}
In this section, we will present some numerical examples to confirm  the discrete energy dissipation property  and the  accuracy of the full discrete SAV Fourier spectral schemes.
\begin{Ex}\label{ex1}
%\end{Ex}
 In this example,  we take $\Omega=(-16,16)\times(-16,16)$, $T=1$, $\kappa=1$.
we consider $F(u)=1-\cos u$, %the  \eqref{sin1}-\eqref{sin2} reduces to the conventional sine-Gordon equation.
 and take the  initial value $u(x,y,0)=\sin(\pi x/16)\cos(\pi y/16)$ and the corresponding $u_{t}(x,y,0)=0$.
\begin{itemize}
  \item case 1: $\gamma_1=0,~\gamma_2=0.$
  \item case 2: $\gamma_1=1,~\gamma_2=1.$
\end{itemize}
\end{Ex}

Since we have not the exact solution, and thus we choose sufficiently small time step $K=1000$ and $N=256$ to get `exact' solution. For simplicity,  Table 1 for case 1 just presents  the errors of
 $\|e_{u}^{n}\|_{\infty}$, $\|e_{v}^{n}\|_{\infty}$,  $\|e_{r}^{n}\|_{\infty}$  in time and show that the  fully discrete SAV scheme is second order accuracy in time.  Table 2 presents the spatial error of case 1 and we get spectral accuracy. Similar to the computational accuracy for case 1, we also
 present the temporal accuracy in Table 3 and spatial accuracy for case 2 in Table 4. Next, we will verify the conservation or dissipation property of the SAV Fourier spectral method. Fig. \ref{fig:5.4.1} shows the time evolution of the discrete energy $\mathbf{H}^{n}$  with different {\color{blue}{values}} $\gamma_1$ and $\gamma_2$ for  $\a=1.2$ and $\a=1.8$ and associate errors of $\mathbf{H}^{n}$ for the conservation case is presented in Fig. \ref{fig:5.4.2}. We  observe that the damping {\color{blue}{parameters}} $\gamma_1$ and $\gamma_2$ efficiently effect the dissipation property for SAV scheme for long time simulation.\\
%\begin{Ex}
$\mathbf{Example~ 2 \mathbf{.}}$
\begin{eqnarray*}
%\begin{split}
% \nonumber to remove numbering (before each equation)
&&u_{tt}+(-\Delta)^{\frac{\a}{2}}u+\gamma_{1}(-\Delta)^{\frac{\a}{2}}u_{t}+\gamma_{2}u_{t}+F'(u)=0, ~(x,y,t)\in \Omega\times (0,T], \nonumber\\
&&  u(x,y,0)=\frac{1}{2}\arctan(\exp(-\sqrt{x^{2}+y^{2}})),~u_{t}(x,y,0)=0,  \label{exam1}
%&&  u(x,y,t)=0, (x,y,t)\in \partial\Omega ,\nonumber
\end{eqnarray*}
%\end{Ex}
where $\Omega=(-10,10)\times (-10,10)$.
 We present the profiles of numerical solution $u_{N}$ with different potential energy  $F(u)=u^{2}(\frac{1}{4}u^{2}-\frac{1}{2})$  for $\alpha=1.2$ and different {\color{blue}values} of $\gamma_{1}=\gamma_{2}$ at $T=8$, which shows that $\gamma_{1}$, $\gamma_{2}$ have  impacts on the profiles of wave.   In addition, we need to confirm the fully discrete  energy dissipation-preserving or conservation property of numerical solution.  Figs. \ref{fig:5.4.3} presents the discrete energy $\mathbf{H}^{n}$  with different values of $\gamma_{1}$ and $\gamma_{2}$ in long time simulation. They also show that the influence of coefficients  $\gamma_{1}$ and $\gamma_{2}$  of the damping term, i.e., when $\gamma_{1},\gamma_{2} \rightarrow 0 $, the energy discrete energy $\mathbf{H}^{n}$ decays more slowly.
 \begin{table}[!htb]
\caption{The $L^{\infty}$ errors of $u$, $v$ and $r$ in time   for case 1 of Example \ref{ex1} at time $T=1$.}
\label{tab9}
\centering
\begin{tabular}{ccccccc}
\hline
\multicolumn{1}{c}{ \multirow{2}*{} }& \multicolumn{6}{c}{$\a=1.2$, ~$N=256$} \\
%\cline{2-9}
%\hline
%\multicolumn{1}{c}{}&\multicolumn{2}{c}{TLFDM}&\multicolumn{2}{c}{IFDM}&\multicolumn{2}{c}{CNGLS}&\multicolumn{2}{c}{LGLS}\\
\hline
  $\Delta t$&$\|e_{u}\|_{\infty}$ &rate  &$\|e_{v}\|_{\infty}$ &rate   &$\|e_{r}\|_{\infty}$ &rate     \\
  \hline
1/10 &5.4953e-05  &      &3.8436e-05 &       &7.0286e-05  &        \\
1/20 &1.3671e-05  &2.0228&9.4748e-06 &2.0262 &1.8739e-05  &1.9071 \\
1/40 &3.4055e-06  &2.0400&2.4215e-06 &2.0200 & 4.8090e-06  &1.9622 \\
%\multicolumn{2}{c}{}&{CNDFM}&{}&{IMDFM}&{}&{CNGLS}&{}&{LLGLS}\\
%\multicolumn{1}{c}{}&\multicolumn{2}{c}{CNFDM}&\multicolumn{2}{c}{IMFDM}&\multicolumn{2}{c}{CNGLS}&\multicolumn{2}{c}{LGLS}\\
\hline
\multicolumn{1}{c}{ \multirow{2}*{} }& \multicolumn{6}{c}{$\a=1.5$, ~$N=256$} \\
%\hline
%\multicolumn{1}{c}{}&\multicolumn{2}{c}{TLFDM}&\multicolumn{2}{c}{IFDM}&\multicolumn{2}{c}{CNGLS}&\multicolumn{2}{c}{LGLS}\\
\hline
  $\Delta t$&$\|e_{u}\|_{\infty}$ &rate  &$\|e_{v}\|_{\infty}$ &rate   &$\|e_{r}\|_{\infty}$ &rate     \\
  \hline
1/10 &2.6766e-05  &      &4.0322e-05 &       &9.4683e-05  &        \\
1/20 &6.5081e-06  &2.0400&1.0764e-05 &1.9053 &2.5043e-05  & 1.9186 \\
1/40 &1.6014e-06  &2.0229& 2.7703e-06 &1.9581 &6.4132e-06  &1.8011 \\
\hline
\multicolumn{1}{c}{ \multirow{2}*{} }& \multicolumn{6}{c}{$\a=1.8$, ~$N=256$} \\
\hline
%\multicolumn{1}{c}{}&\multicolumn{2}{c}{TLFDM}&\multicolumn{2}{c}{IFDM}&\multicolumn{2}{c}{CNGLS}&\multicolumn{2}{c}{LGLS}\\
%\hline
    $\Delta t$&$\|e_{u}\|_{\infty}$ &rate  &$\|e_{v}\|_{\infty}$ &rate   &$\|e_{r}\|_{\infty}$ &rate     \\
  \hline

1/10 &1.1679e-05 &      &4.4344e-05 &              &1.0619e-04  &        \\
1/20 &2.6560e-06 &2.0070 &1.1815e-05 &2.0202       &2.8057e-05  &1.9202        \\
1/40 &6.2907e-07 &2.0051 &3.0389e-06 &1.9681      &7.1860e-06   &1.9650\\
\hline
\multicolumn{1}{c}{ \multirow{2}*{} }& \multicolumn{6}{c}{$\a=2.0$, ~$N=256$} \\
\hline
  $\Delta t$&$\|e_{u}\|_{\infty}$ &rate  &$\|e_{v}\|_{\infty}$ &rate   &$\|e_{r}\|_{\infty}$ &rate     \\
  \hline
1/10 &6.0094e-06  &       &4.6442e-05 &       &1.1018e-04   &     \\
1/20 &2.3176e-06 &1.3745  &1.2362e-05 &1.9095 &2.9115e-05   &1.9200  \\
1/40 &6.8385e-07 &1.7608  &3.1787e-06 &1.9594 &7.4597e-06   &1.9645   \\
\hline
\end{tabular}
\end{table}
\begin{table}[!htb]
\caption{The errors  $|e_{u}|_{\frac{\a}{2}}$ and $\|e_{u}\|$ in space for case 1  of Example \ref{ex1} at time $T=1$.}
\label{tab10}
\centering
\begin{tabular}{cccccccc}
%\hline
%\multicolumn{1}{c}{ \multirow{2}*{} }& \multicolumn{4}{c}{$\a=1.2$, ~$K=1000$} \\
%\cline{2-9}
%\hline
%\multicolumn{1}{c}{}&\multicolumn{2}{c}{TLFDM}&\multicolumn{2}{c}{IFDM}&\multicolumn{2}{c}{CNGLS}&\multicolumn{2}{c}{LGLS}\\
\hline
 \multicolumn{1}{c}{} &N&$\|e_{u}\|_{\frac{\a}{2}}$ &rate  &$\|e_{v}\|$ &rate     \\
  \hline
  \multirow{4}*{$\a=1.2$}
&4 &2.4856e-02   &      &4.7569e-03 &       \\
&8 &1.2884e-03   &4.2699&1.6427e-04&4.8558  \\
&16 &1.5766e-07  &12.9965&3.3921e-08 &12.2416  \\
&32 &3.6791e-10  &8.7432&3.6608e-11 & 9.8558  \\
%\multicolumn{2}{c}{}&{CNDFM}&{}&{IMDFM}&{}&{CNGLS}&{}&{LLGLS}\\
%\multicolumn{1}{c}{}&\multicolumn{2}{c}{CNFDM}&\multicolumn{2}{c}{IMFDM}&\multicolumn{2}{c}{CNGLS}&\multicolumn{2}{c}{LGLS}\\
%\hline
%\multicolumn{1}{c}{ \multirow{2}*{} }& \multicolumn{4}{c}{$\a=1.5$, ~$K=1000$}  \\
%\hline
%\multicolumn{1}{c}{}&\multicolumn{2}{c}{TLFDM}&\multicolumn{2}{c}{IFDM}&\multicolumn{2}{c}{CNGLS}&\multicolumn{2}{c}{LGLS}\\
\hline
\multirow{4}*{$\a=1.5$}
&4 &2.0858e-02  &        &4.9288e-03 &       \\
&8 &1.2541e-03  &4.0559  &1.7487e-04 &4.8169  \\
&16 &1.7644e-07 &12.7952 &3.6692e-08 &12.2185  \\
&32 &3.6622e-10 &8.9123  &7.8163e-11 &8.8748 \\
%\hline
%{1}{c}{ \multirow{2}*{} }& \multicolumn{4}{c}{$\a=1.8$, ~$K=1000$}\\
\hline
%\multicolumn{1}{c}{}&\multicolumn{2}{c}{TLFDM}&\multicolumn{2}{c}{IFDM}&\multicolumn{2}{c}{CNGLS}&\multicolumn{2}{c}{LGLS}\\
%\hline
  \multirow{4}*{$\a=1.8$}
&4 &1.7418e-02  &      &5.0534e-03 &       \\
&8 &1.2128e-03  &3.8442&1.8282e-04 &4.7888  \\
&16 &1.9317e-07 &12.6162&3.8413e-08 &12.2165  \\
&32 &3.3982e-10 &9.1509&7.3433e-11 &9.0310 \\
\hline
  %N&$\|e_{u}\|_{\frac{\a}{2}}$ &rate  &$\|e_{v}\|$ &rate     \\
  %\hline
\multirow{4}*{$\a=2.0$}&4 &1.5416e-02  &      &5.1169e-03 &       \\
&8 &1.1831e-03  &3.7038&1.8693e-04 &4.7747  \\
&16 &2.0333e-07 &12.5065&3.9072e-08 &12.2241  \\
&32 &2.8541e-10 &9.4766&7.8163e-10 &5.6435  \\
\hline
\end{tabular}
\end{table}

\begin{table}[!htb]
\caption{The $L^{\infty}$ errors of $u$, $v$ and $r$ in time   for case 2 of Example \ref{ex1} at time $T=1$.}
\label{tab9}
\centering
\begin{tabular}{ccccccc}
\hline
\multicolumn{1}{c}{ \multirow{2}*{} }& \multicolumn{6}{c}{$\a=1.2$, ~$N=256$} \\
%\cline{2-9}
%\hline
%\multicolumn{1}{c}{}&\multicolumn{2}{c}{TLFDM}&\multicolumn{2}{c}{IFDM}&\multicolumn{2}{c}{CNGLS}&\multicolumn{2}{c}{LGLS}\\
\hline
  $\Delta t$&$\|e_{u}\|_{\infty}$ &rate  &$\|e_{v}\|_{\infty}$ &rate   &$\|e_{r}\|_{\infty}$ &rate     \\
  \hline
1/10 &1.1203e-04  &      &1.2852e-04 &       &1.6015e-05  &        \\
1/20 &2.7924e-05  &2.0043&3.2131e-05 &1.9999 &3.9645e-05  &2.0142 \\
1/40 &6.9647e-06  &2.0034&8.0258e-06 &2.0012 &9.8517e-06  &2.0087 \\
%\multicolumn{2}{c}{}&{CNDFM}&{}&{IMDFM}&{}&{CNGLS}&{}&{LLGLS}\\
%\multicolumn{1}{c}{}&\multicolumn{2}{c}{CNFDM}&\multicolumn{2}{c}{IMFDM}&\multicolumn{2}{c}{CNGLS}&\multicolumn{2}{c}{LGLS}\\
\hline
\multicolumn{1}{c}{ \multirow{2}*{} }& \multicolumn{6}{c}{$\a=1.5$, ~$N=256$} \\
%\hline
%\multicolumn{1}{c}{}&\multicolumn{2}{c}{TLFDM}&\multicolumn{2}{c}{IFDM}&\multicolumn{2}{c}{CNGLS}&\multicolumn{2}{c}{LGLS}\\
\hline
  $\Delta t$&$\|e_{u}\|_{\infty}$ &rate  &$\|e_{v}\|_{\infty}$ &rate   &$\|e_{r}\|_{\infty}$ &rate     \\
  \hline
1/10 &8.3862e-05  &      &9.8031e-05 &       &1.2404e-04  &        \\
1/20 &2.0836e-05  &2.0088&2.4533e-05 &1.9985 &3.0467e-05  &2.0255 \\
1/40 &5.1883e-06  &2.0057&6.1308e-06 &2.0005 &7.5528e-06  &2.0121 \\
\hline
\multicolumn{1}{c}{ \multirow{2}*{} }& \multicolumn{6}{c}{$\a=1.8$, ~$N=256$} \\
\hline
%\multicolumn{1}{c}{}&\multicolumn{2}{c}{TLFDM}&\multicolumn{2}{c}{IFDM}&\multicolumn{2}{c}{CNGLS}&\multicolumn{2}{c}{LGLS}\\
%\hline
    $\Delta t$&$\|e_{u}\|_{\infty}$ &rate  &$\|e_{v}\|_{\infty}$ &rate   &$\|e_{r}\|_{\infty}$ &rate     \\
  \hline

1/10 &6.4938e-05 &      &7.9646e-05 &              &1.0115e-04  &        \\
1/20 &1.6072e-05 &2.0144 &1.9955e-05 &1.9985       &2.4672e-05  &2.0355        \\
1/40 &3.9939e-06 &2.0087 &6.1308e-06 &2.0005      &6.0930e-06   &2.0177\\
\hline
\multicolumn{1}{c}{ \multirow{2}*{} }& \multicolumn{6}{c}{$\a=2.0$, ~$N=256$} \\
\hline
  $\Delta t$&$\|e_{u}\|_{\infty}$ &rate  &$\|e_{v}\|_{\infty}$ &rate   &$\|e_{r}\|_{\infty}$ &rate     \\
  \hline
1/10 &5.5967e-05  &       &7.1465e-05 &       &9.0572e-05   &     \\
1/20 &1.3813e-05 &2.0185  &1.7920e-05 &1.9956 &2.2004e-05   &2.0413  \\
1/40 &3.4273e-06 &2.0108  &4.4825e-06 &1.9992 &5.4268e-06   &2.0196   \\
\hline
\end{tabular}
\end{table}
\begin{table}[!htb]
\caption{The errors  $|e_{u}|_{\frac{\a}{2}}$ and $\|e_{u}\|$ in space for case 2  of Example \ref{ex1} at time $T=1$.}
\label{tab10}
\centering
\begin{tabular}{cccccccc}
%\hline
%\multicolumn{1}{c}{ \multirow{2}*{} }& \multicolumn{4}{c}{$\a=1.2$, ~$K=1000$} \\
%\cline{2-9}
%\hline
%\multicolumn{1}{c}{}&\multicolumn{2}{c}{TLFDM}&\multicolumn{2}{c}{IFDM}&\multicolumn{2}{c}{CNGLS}&\multicolumn{2}{c}{LGLS}\\
\hline
 \multicolumn{1}{c}{} &N&$\|e_{u}\|_{\frac{\a}{2}}$ &rate  &$\|e_{v}\|$ &rate     \\
  \hline
  \multirow{4}*{$\a=1.2$}
&4 &1.6873e-02   &      &2.6620e-03 &       \\
&8 &7.8623e-04   &4.4236&7.5409e-05&5.1416  \\
&16 &7.0047e-08  &13.4543&1.3512e-08 &12.4462  \\
&32 &1.6426e-10  &8.7361&4.7132e-11 &8.1633  \\
%\multicolumn{2}{c}{}&{CNDFM}&{}&{IMDFM}&{}&{CNGLS}&{}&{LLGLS}\\
%\multicolumn{1}{c}{}&\multicolumn{2}{c}{CNFDM}&\multicolumn{2}{c}{IMFDM}&\multicolumn{2}{c}{CNGLS}&\multicolumn{2}{c}{LGLS}\\
%\hline
%\multicolumn{1}{c}{ \multirow{2}*{} }& \multicolumn{4}{c}{$\a=1.5$, ~$K=1000$}  \\
%\hline
%\multicolumn{1}{c}{}&\multicolumn{2}{c}{TLFDM}&\multicolumn{2}{c}{IFDM}&\multicolumn{2}{c}{CNGLS}&\multicolumn{2}{c}{LGLS}\\
\hline
\multirow{4}*{$\a=1.5$}
&4 &1.4409e-02  &        &2.8344e-03 &       \\
&8 &7.7021e-04  &4.2255  &8.2058e-05 &5.1102  \\
&16 &7.3050e-08 &13.3640 &1.3431e-08 &12.5768  \\
&32 &2.6752e-10 &8.0930  &1.2130e-10 &6.7908 \\
%\hline
%{1}{c}{ \multirow{2}*{} }& \multicolumn{4}{c}{$\a=1.8$, ~$K=1000$}\\
\hline
%\multicolumn{1}{c}{}&\multicolumn{2}{c}{TLFDM}&\multicolumn{2}{c}{IFDM}&\multicolumn{2}{c}{CNGLS}&\multicolumn{2}{c}{LGLS}\\
%\hline
  \multirow{4}*{$\a=1.8$}
&4 &1.2195e-02  &      &2.9658e-03 &       \\
&8 &7.4937e-04  &4.0244&8.7406e-05 &5.0845  \\
&16 &7.4484e-08 &13.2964&1.2990e-08 &12.7161  \\
&32 &1.7250e-10 &8.7541&1.1949e-10 &6.7643 \\
\hline
  %N&$\|e_{u}\|_{\frac{\a}{2}}$ &rate  &$\|e_{v}\|$ &rate     \\
  %\hline
\multirow{4}*{$\a=2.0$}
&4 &1.0871e-02  &      &3.0354e-03 &       \\
&8 &7.3373e-04  &3.8891&9.0363e-05 &5.0700  \\
&16 &7.4571e-08 &13.2643&1.2546e-08 &12.8142  \\
&32 &4.3006e-10 &7.4381&1.1855e-10 &6.7255  \\
\hline
\end{tabular}
\end{table}
\begin{figure}[ht]
\centering
\includegraphics[height=2.1in,width=3.0in]{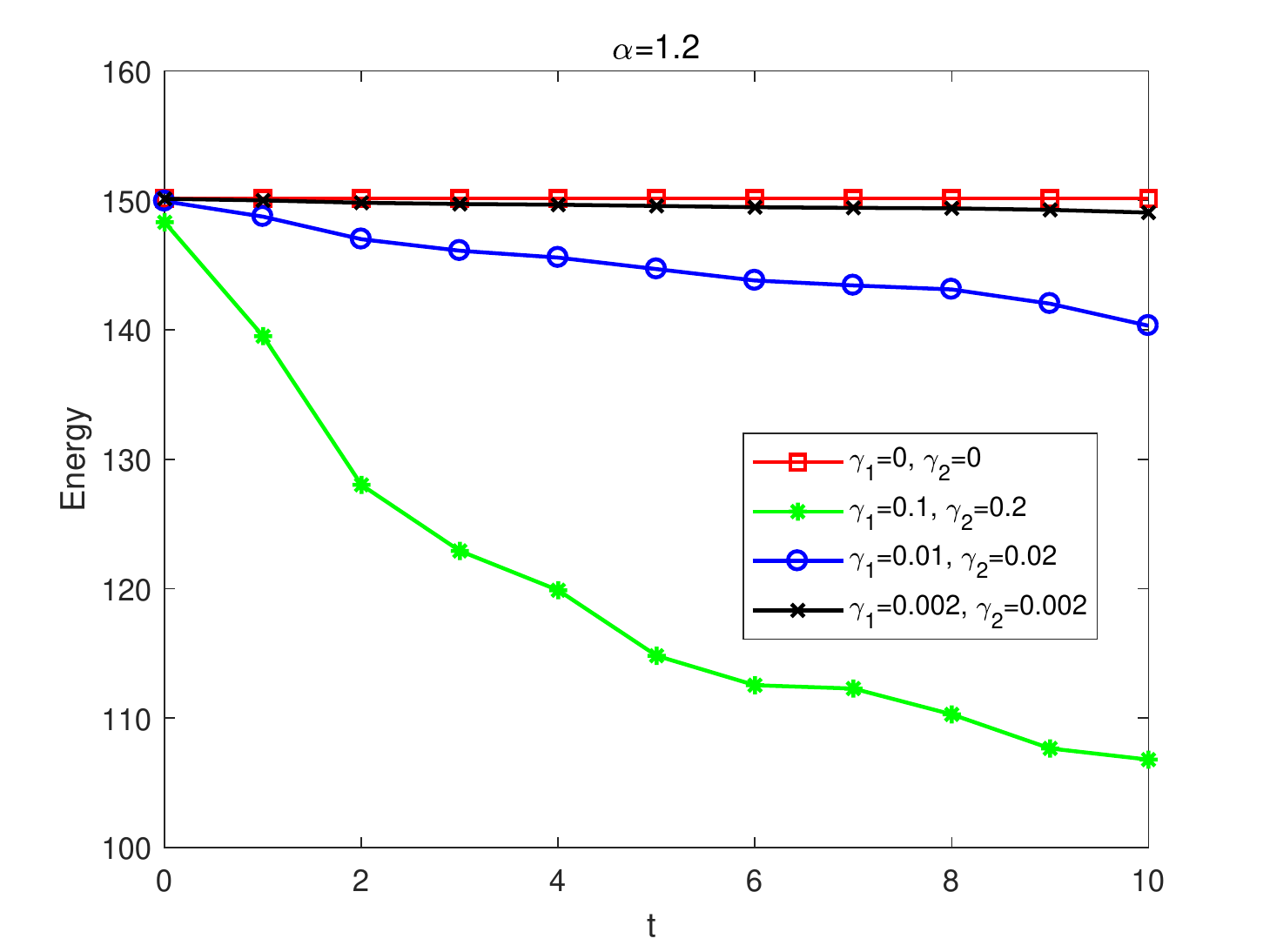}
\includegraphics[height=2.1in,width=3.0in]{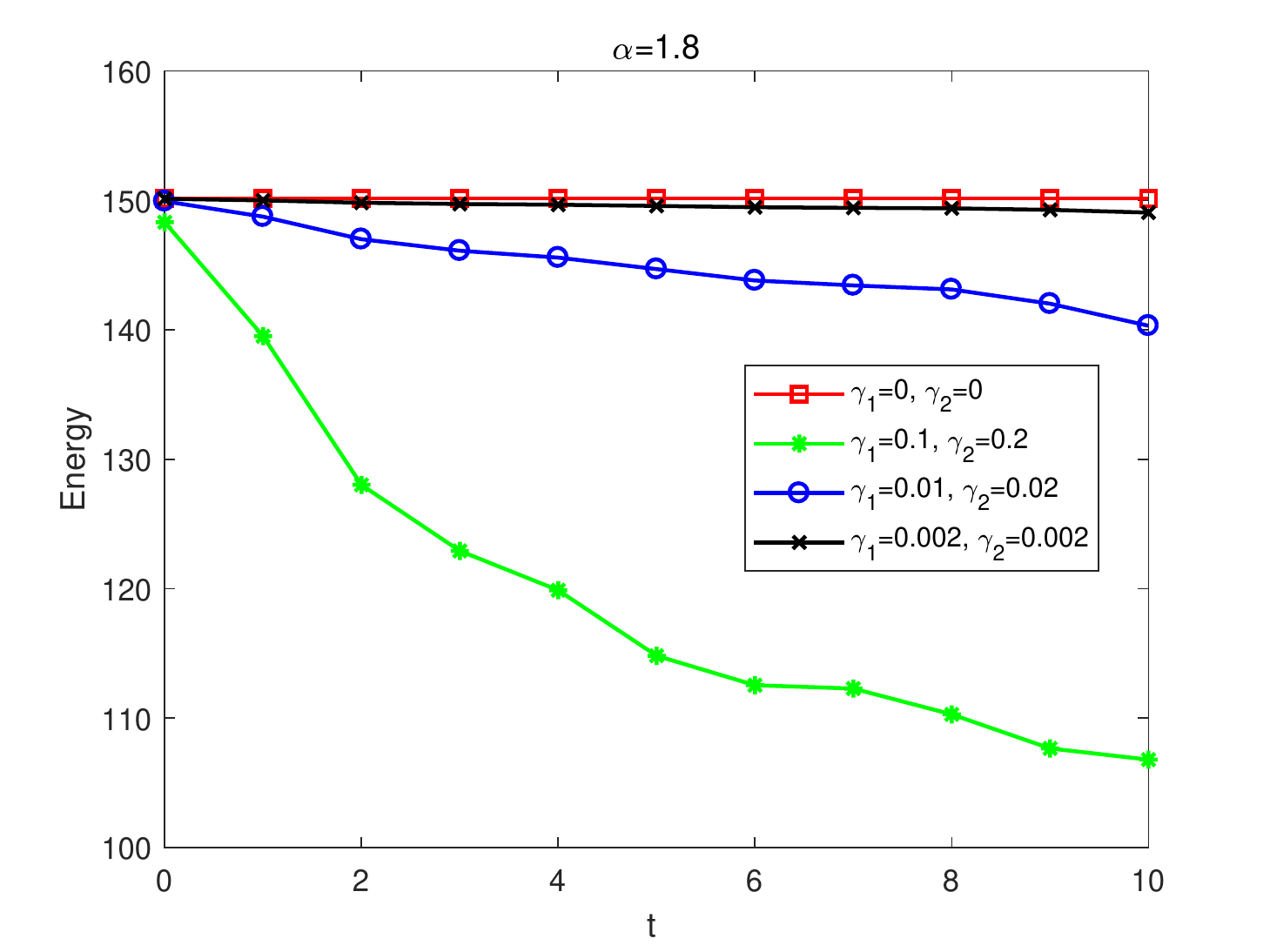}
 \caption{The values of the discrete energy $\mathbf{H}^{n}$ for  different $\gamma_{1}$ and $\gamma_{2}$ with time evolution for $\a_{1}=1.2$ and $\a_{2}=1.8$.}\label{fig:5.4.1}
\end{figure}
\begin{figure}[ht]
\centering
\includegraphics[height=2.1in,width=3.0in]{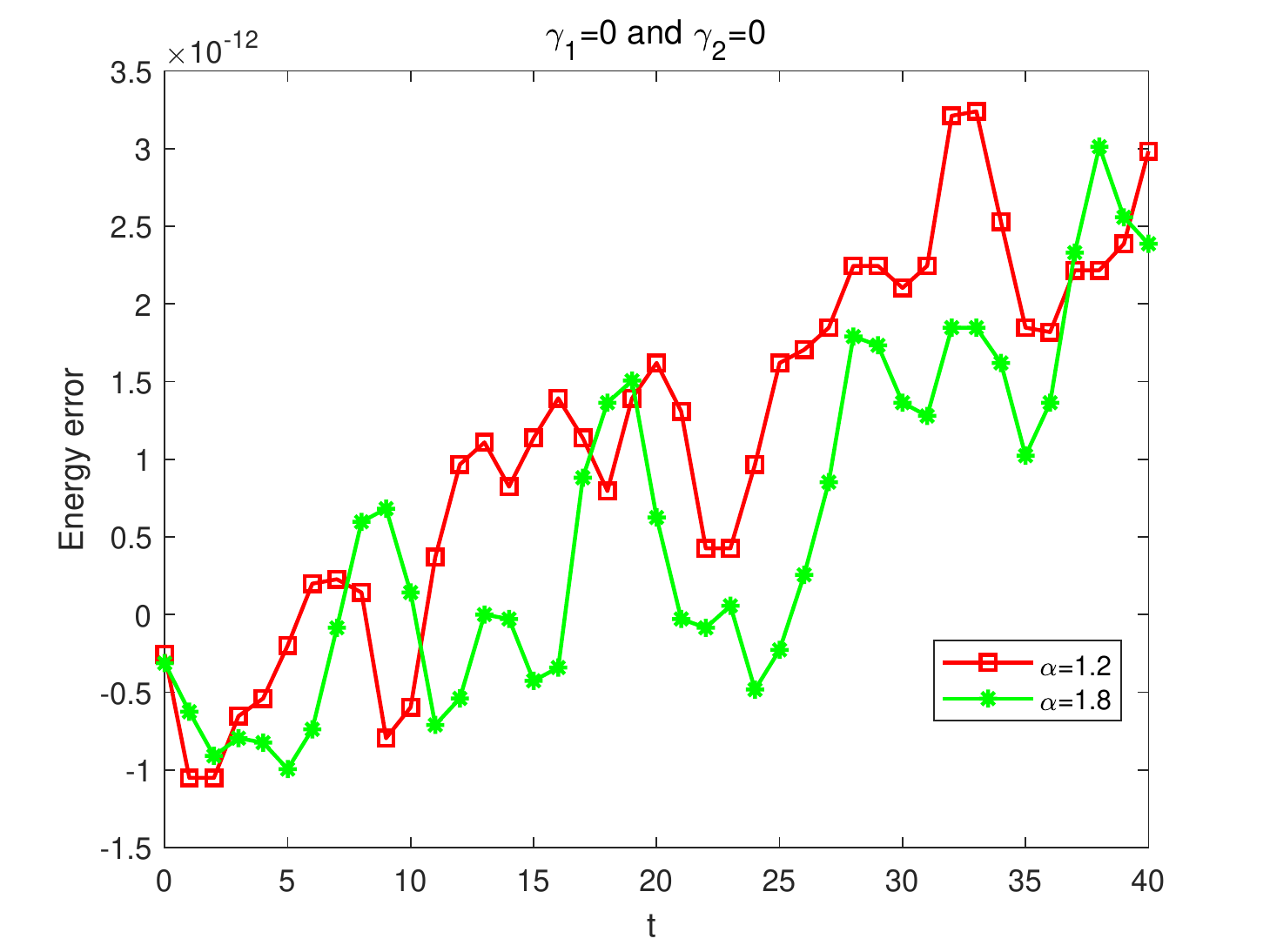}
 \caption{The error of the discrete energy $\mathbf{H}^{n}$ for  conservative form,i.e., $\gamma_{1}=0,~\gamma_{2}=0$ with time evolution for $\a_{1}=1.2$ and $\a_{2}=1.8$.}\label{fig:5.4.2}
\end{figure}
\begin{figure}[ht]
\centering
\includegraphics[height=2.1in,width=3.0in]{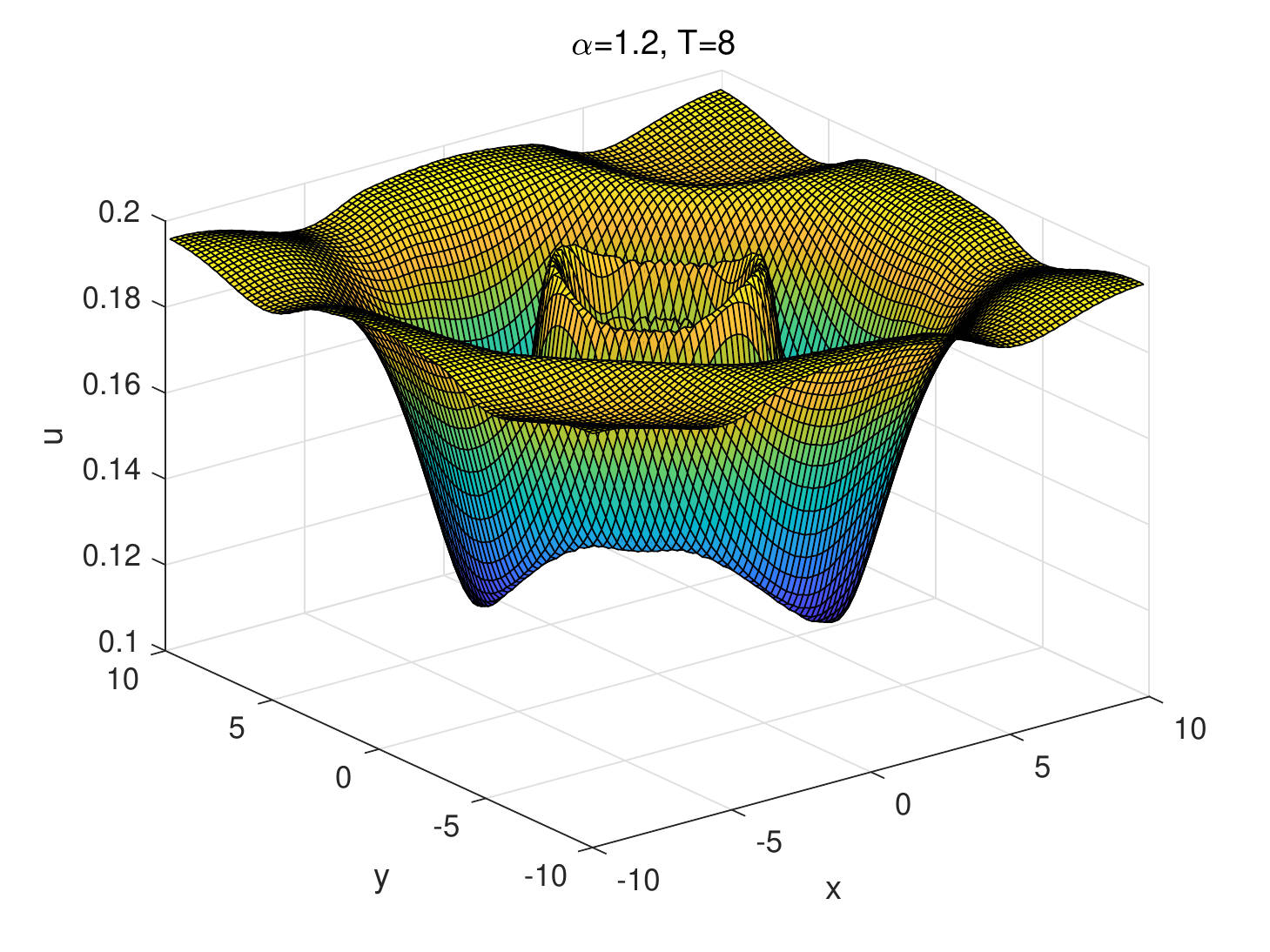}
\includegraphics[height=2.1in,width=3.0in]{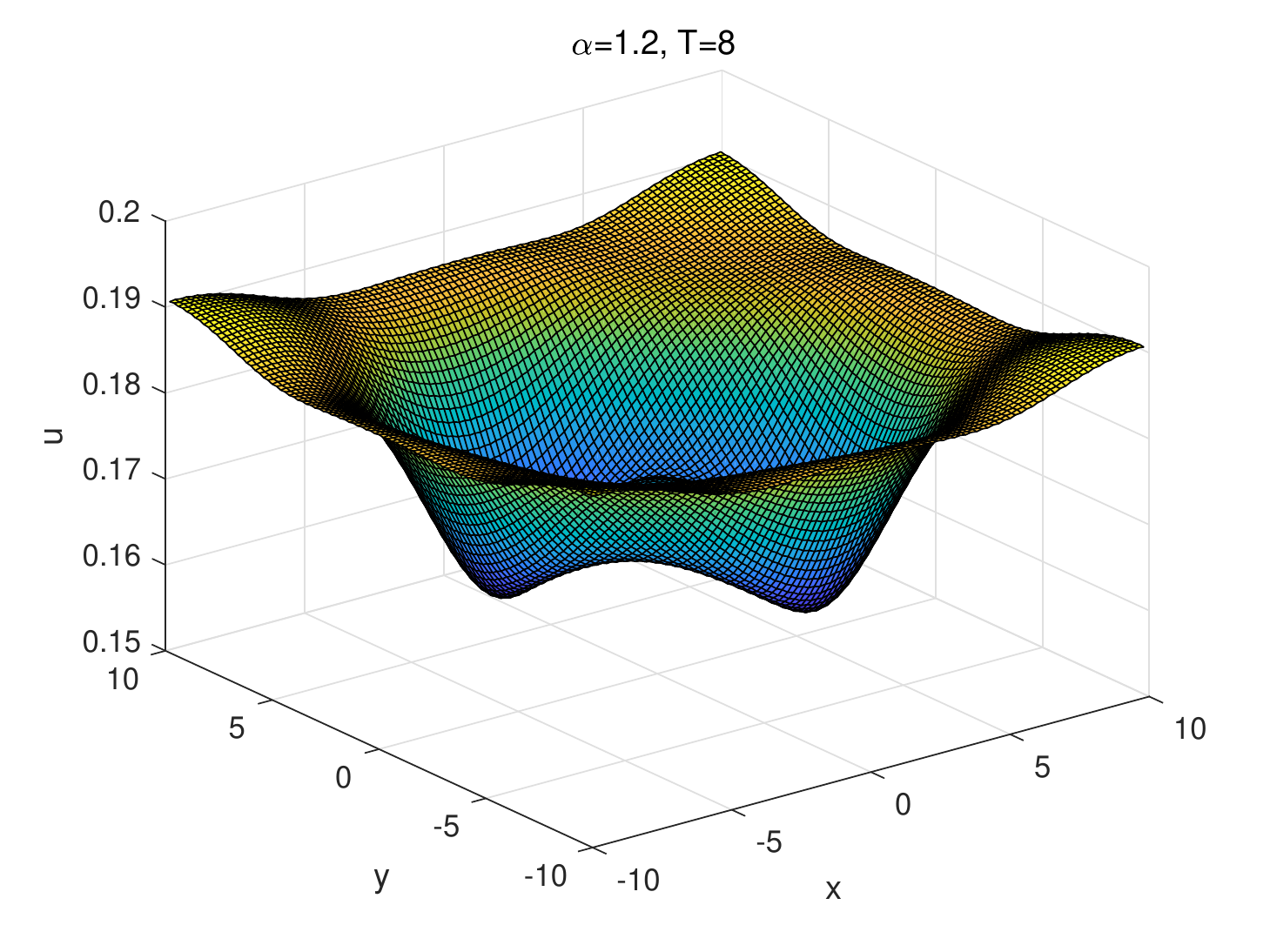}
 \caption{The numerical solution for  different $\gamma_{1}$ and $\gamma_{2}$ (Left: $\gamma_1=0$ and $\gamma_2=0$; Right: $\gamma_1=0.5$ and $\gamma_2=0$) with time evolution for $\a_{1}=1.2$ at $T=8$ .}\label{fig:5.4.4}
\end{figure}
\begin{figure}[ht]
\centering
\includegraphics[height=2.1in,width=3.0in]{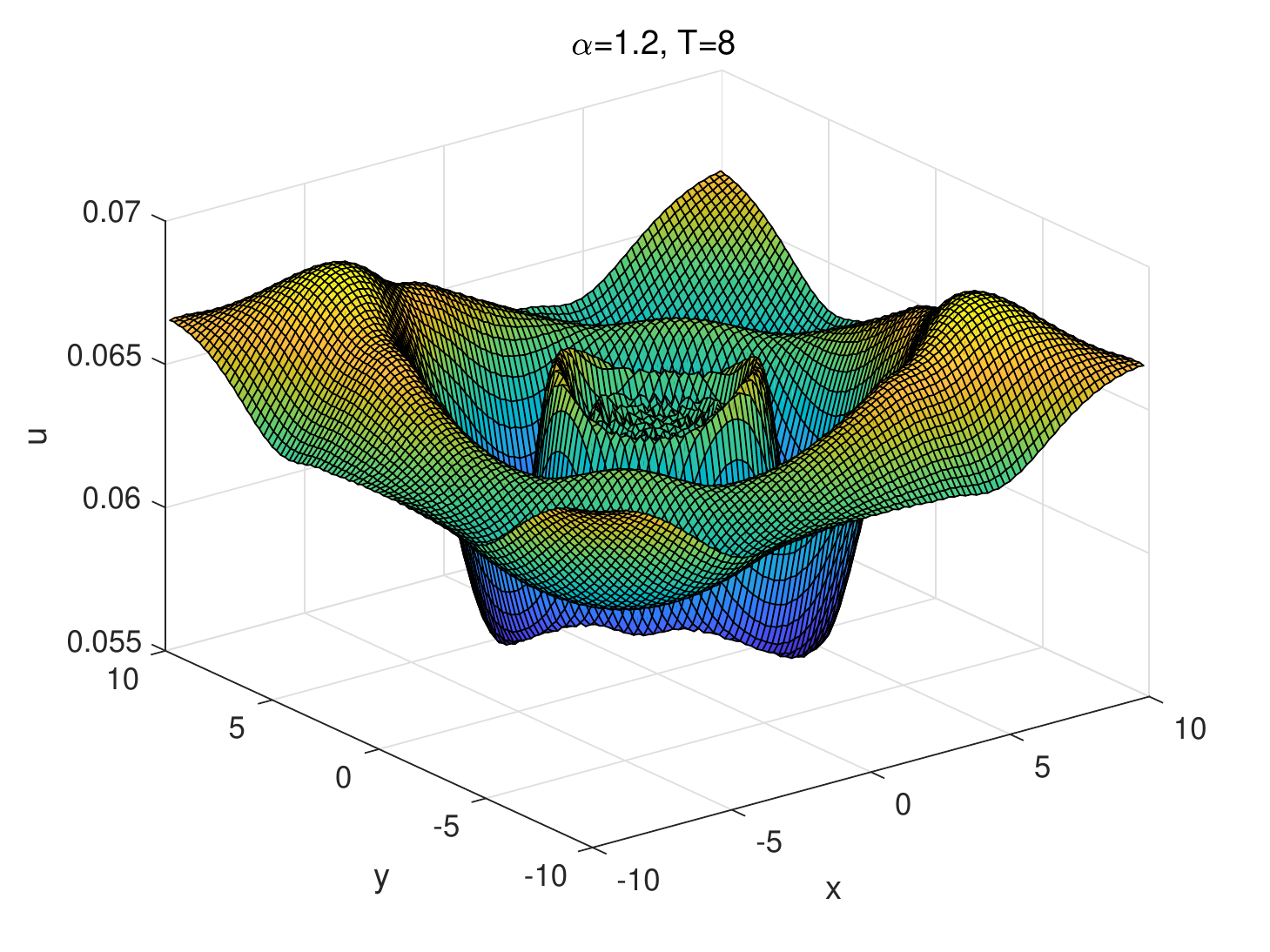}
\includegraphics[height=2.1in,width=3.0in]{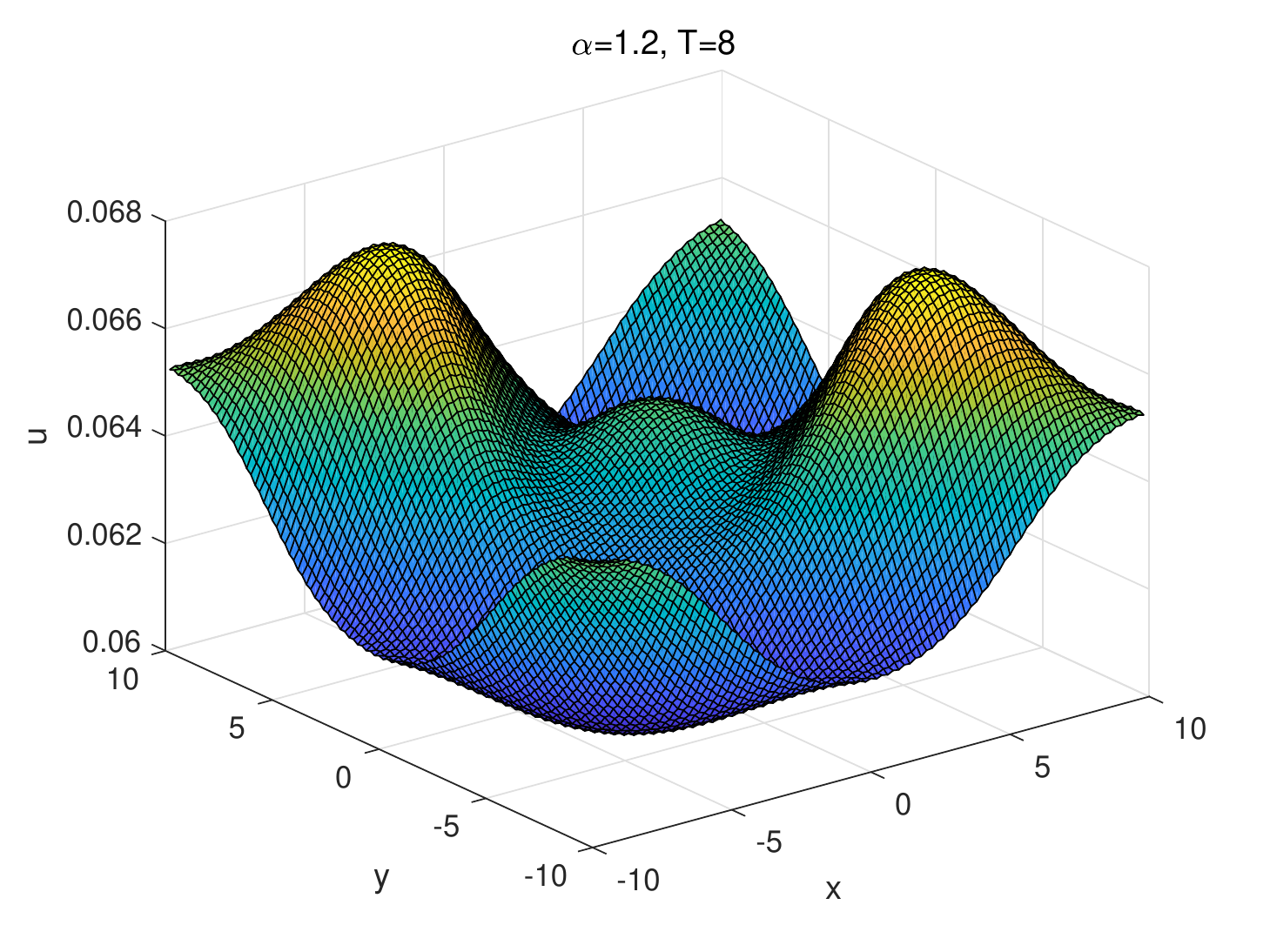}
 \caption{The numerical solution for  different $\gamma_{1}$ and $\gamma_{2}$  (Left: $\gamma_1=0$ and $\gamma_2=0.5$; Right: $\gamma_1=0.5$ and $\gamma_2=0.5$) with time evolution for $\a_{1}=1.2$ at $T=8$.}\label{fig:5.4.5}
\end{figure}
\begin{figure}[ht]
\centering
\includegraphics[height=2.1in,width=3.0in]{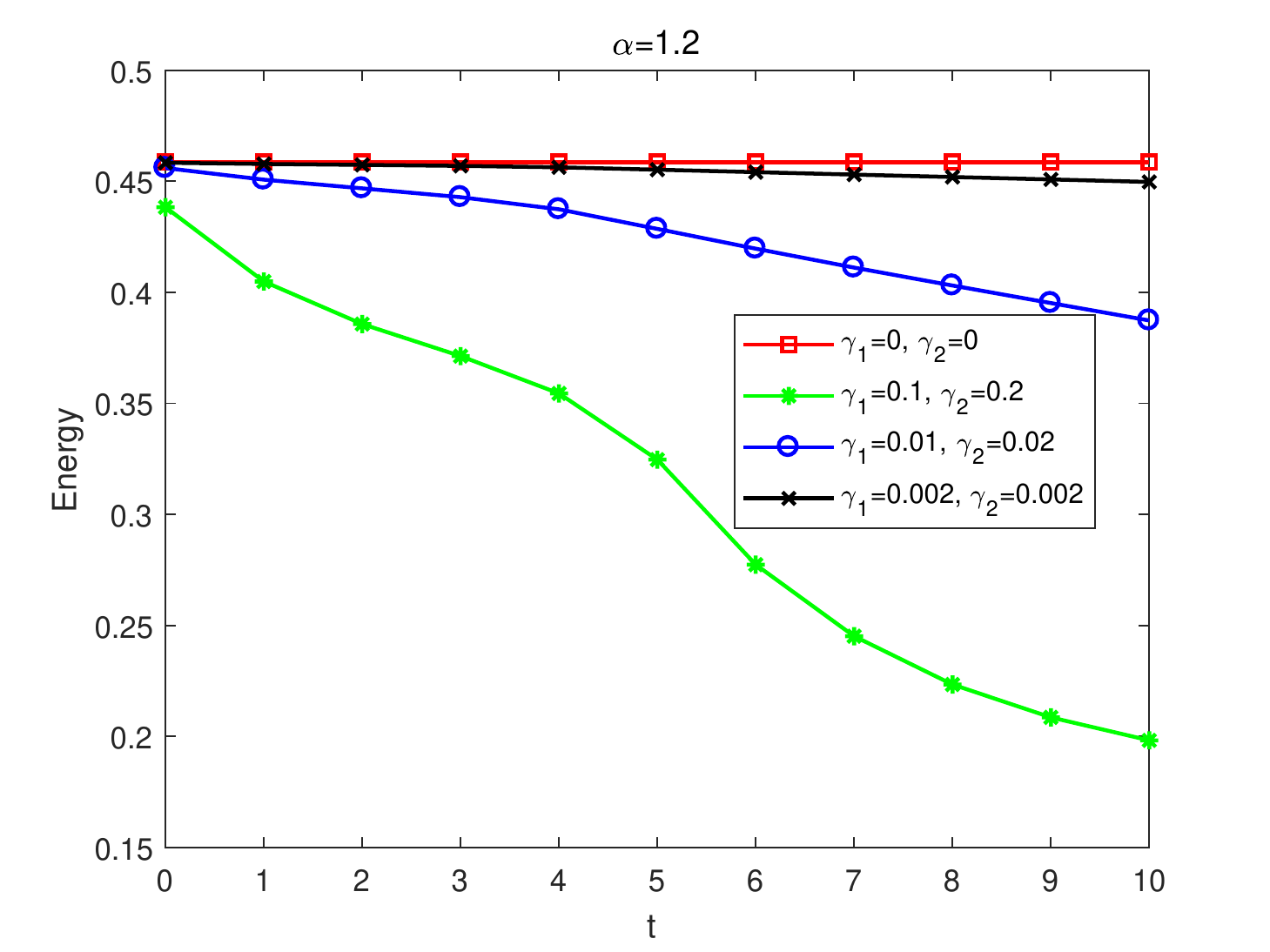}
\includegraphics[height=2.1in,width=3.0in]{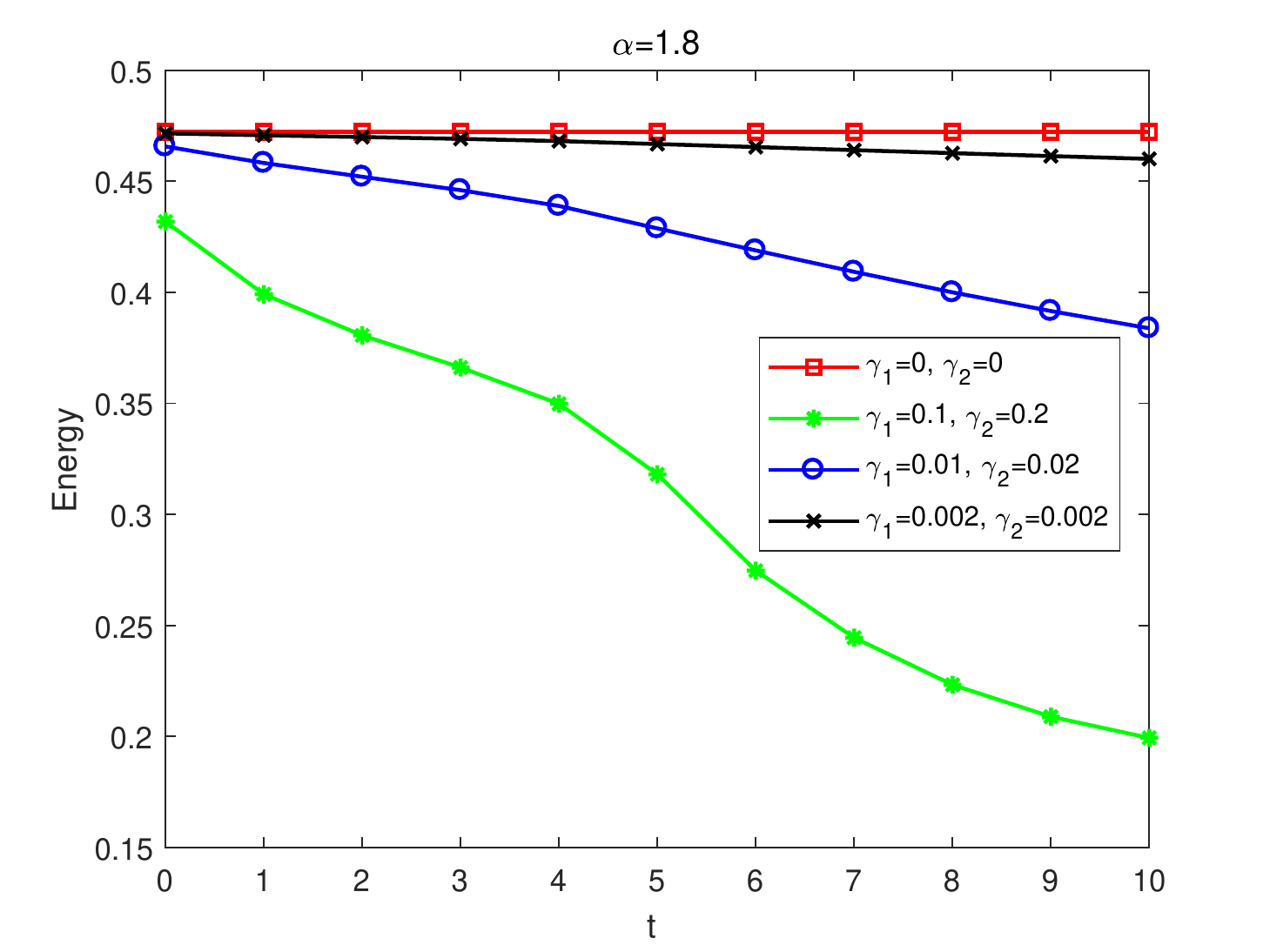}
 \caption{The values of the discrete energy $\mathbf{H}^{n}$ for  different $\gamma_{1}$ and $\gamma_{2}$ with time evolution for $\a_{1}=1.2$ and $\a_{2}=1.8$.}\label{fig:5.4.3}
\end{figure}

\section{Conclusion}
In this paper, we proposed {\color{blue}{a}} linearly implicit scheme to solve the nonlinear fractional general wave equation, and we used the SAV approach in time and Fourier spectral method in space. The resulting system is a linear system at each time step and  FFT solver can be adopted in space, so that the scheme could be efficient to implement.  The energy conservation or dissipation property of the scheme was strictly proved. In addition, we rigorously proved the unconditional convergence for the scheme by the temporal-spatial error splitting technique and got the second order accuracy in time and spectral accuracy in space. Finally, some numerical results were given to confirm our theoretical analysis.

\section*{Acknowledgment}
The authors thank the anonymous reviewer for excellent suggestions that helped improve this paper.

\section*{References}
\bibliographystyle{model1-num-names}
\bibliography{savsgorn}
\end{document}